\renewcommand{\leq}{\leqslant}
\renewcommand{\geq}{\geqslant}
\newcommand{\concat}{\overset{\frown}{}}
\newcommand{\One}{\text{I}}
\newcommand{\Two}{\text{II}}
\newcommand{\Tone}{\overset{\leftarrow}{T}_{\One, n}}
\newcommand{\Ttwo}{\overset{\rightarrow}{T}_{\Two, n}}
\theoremstyle{definition}
\newtheorem{theorem}{Theorem}
\newtheorem{definition}{Definition}
\newtheorem{lemma}[theorem]{Lemma}
\newtheorem{corollary}[theorem]{Corollary}
\newtheorem{remark}{Remark}
\newtheorem*{example}{Example}
\title{Selection Games and the Vietoris Space}
\author{Christopher Caruvana}
\author{Jared Holshouser}
\keywords{Vietoris topology, Selection principles, Rothberger property, Menger property}
\subjclass{54D20, 54D80}
\date{\today}
\begin{document}

\maketitle

\begin{abstract}
    We explore the connections between selection games on Hausdorff spaces and their corresponding Vietoris space of compact subsets.
    These considerations offer a similar relationship as the well-known relationship between \(\omega\)-covers of \(X\) and ordinary open covers of the finite powers of \(X\).
    The primary utility of this method is to establish similar relationships with \(k\)-covers and the Vietoris space of compact subsets.
    Particularly, we show that some commonly studied selection principles are equivalent to a related hyperspace being Menger or Rothberger.
    We then apply these equivalences to correct a flawed argument in a previous paper which attempted to show that a Pawlikowski theorem is true for \(k\)-covers.
\end{abstract}

\section{Introduction}

Relationships between selection principles on a ground space and the hyperspace of closed subsets with various topologies has been a growing area of investigation \cite{CHHyperspaces,Caserta,KocinacEtAl2005,Li2016,Mrsevic,Osipov}.
One of the common techniques employed is to translate certain cover types to families of closed sets via the complement operation.
The resulting relationship is thus between covers of a certain type and dense sets in a related topology given to the space of closed sets.
In \cite[Theorem 3.22]{CHCompactOpen}, a more direct topological relationship is suggested.
By restricting our attention to compact sets, we bring to bear relationships between a space \(X\) and the space \(\mathbb K(X)\) of compact subsets endowed with the Vietoris topology in terms of the cover types themselves.
In particular, we establish relationships between \(\omega\)-covers and open covers on the space of finite subsets of \(X\) viewed as a subspace of \(\mathbb K(X)\) as well as relationships between \(k\)-covers and open covers of \(\mathbb K(X)\).

We also point out an application of these methods, following existing results of \cite{GerlitsNagy,JustMillerScheepers,Sakai1988}, to prove strategic equivalence of some Rothberger- and Menger-like games on \(X\) with the corresponding games on the disjoint union \(X^{<\omega}\) of finite powers of \(X\).
Particularly, these classical results establish a relationship between selection principles involving \(\omega\)-covers on \(X\) and open covers on \(X^{<\omega}\).
This is natural since \(\omega\)-covers are to cover all finite subsets of \(X\) and one can code finite subsets of \(X\) with tuples.

Finally, we also address the following.
Steven Clontz pointed out that the without loss of generality claim in the beginning of the proof of \cite[Proposition 3.25]{CHCompactOpen} (restated in a slightly generalized version in \cite{CHContinuousFunctions} as Lemma 7) is flawed.
We then noticed a similar flaw at the end of the proof of \cite[Proposition 3.27]{CHCompactOpen} (restated in a slightly generalized version in \cite{CHContinuousFunctions} as Lemma 8).
In this note, we recover the conclusions of those results for \(k\)-covers.
However, we remain with the following question.
Are \cite[Lemma 7]{CHContinuousFunctions} and \cite[Lemma 8]{CHContinuousFunctions} true as stated?

Throughout, we assume that all spaces \(X\) considered are Hausdorff, infinite, and, when relevant, non-compact.

\section{Preliminaries}

\begin{definition}
    For a topological space \(X\), we let \(\mathscr T_X\) denote the collection of all proper, non-empty open subsets of \(X\).
\end{definition}
\begin{definition}
    Generally, for an open cover \(\mathscr U\) of a topological space \(X\), we say that \(\mathscr U\) is \emph{non-trivial} provided that \(X \not\in \mathscr U\).
    We let \(\mathcal O_X\) denote the collection of all non-trivial open covers of \(X\).
\end{definition}
\begin{definition}
    For a space \(X\) and a class \(\mathcal A\) of closed proper subsets of \(X\), a non-trivial open cover \(\mathscr U\) is an \emph{\(\mathcal A\)-cover} if, for every \(A \in \mathcal A\), there exists \(U \in \mathscr U\) so that \(A \subseteq U\).
    We let \(\mathcal O(X, \mathcal A)\) denote the collection of all \(\mathcal A\)-covers of \(X\).
\end{definition}
\begin{remark}
    Note that
    \begin{itemize}
        \item
		if \(\mathcal A\) consists of the finite subsets of \(X\), then \(\mathcal O(X,\mathcal A)\) is the collection of all \(\omega\)-covers of \(X\), which will be denoted by \(\Omega_X\).
		\item
		if \(\mathcal A\) consists of the compact (proper) subsets of \(X\), then \(\mathcal O(X, \mathcal A)\) is the collection of all \(k\)-covers of \(X\), which will be denoted by \(\mathcal K_X\).
    \end{itemize}
\end{remark}
\begin{definition}
	Given a set \(\mathcal A\) and another set \(\mathcal B\), we define the \emph{finite selection game} \(\textsf{G}_{\text{fin}}(\mathcal A, \mathcal B)\) for \(\mathcal A\) and \(\mathcal B\) as follows:
	\[
		\begin{array}{c|cccc}
			\text{I} & A_0 & A_1 & A_2 & \ldots \\
			\hline
			\text{II} & \mathcal F_0 & \mathcal F_1 & \mathcal F_2 & \ldots
		\end{array}
	\]
	where \(A_n \in \mathcal A\) and \(\mathcal F_n \in [A_n]^{<\omega}\) for all \(n \in \omega\).
	We declare Two the winner if \(\bigcup\{ \mathcal F_n : n \in \omega \} \in \mathcal B\).
	Otherwise, One wins.
\end{definition}

\begin{definition}
	Similarly, we define the \emph{single selection game} \(\textsf{G}_1(\mathcal A, \mathcal B)\) as follows:
	\[
		\begin{array}{c|cccc}
			\text{I} & A_0 & A_1 & A_2 & \ldots \\
			\hline
			\text{II} & x_0 & x_1 & x_2 & \ldots
		\end{array}
	\]
	where each \(A_n \in \mathcal A\) and \(x_n \in A_n\).
	We declare Two the winner if \(\{ x_n : n \in \omega \} \in \mathcal B\).
	Otherwise, One wins.
\end{definition}

\begin{definition}
    We define strategies of various strength below.
    \begin{itemize}
    \item
    A \emph{strategy for player One} in \(\textsf{G}_1(\mathcal A, \mathcal B)\) is a function \(\sigma:(\bigcup \mathcal A)^{<\omega} \to \mathcal A\).
    A strategy \(\sigma\) for One is called \emph{winning} if whenever \(x_n \in \sigma\langle x_k : k < n \rangle\) for all \(n \in \omega\), \(\{x_n: n\in\omega\} \not\in \mathcal B\).
    If player One has a winning strategy, we write \(\One \uparrow \textsf{G}_1(\mathcal A, \mathcal B)\).
    \item
    A \emph{strategy for player Two} in \(\textsf{G}_1(\mathcal A, \mathcal B)\) is a function \(\tau:\mathcal A^{<\omega} \to \bigcup \mathcal A\).
    A strategy \(\tau\) for Two is \emph{winning} if whenever \(A_n \in \mathcal A\) for all \(n \in \omega\), \(\{\tau(A_0,\ldots,A_n) : n \in \omega\} \in \mathcal B\).
    If player Two has a winning strategy, we write \(\Two \uparrow \textsf{G}_1(\mathcal A, \mathcal B)\).
    \item
    A \emph{predetermined strategy} for One is a strategy which only considers the current turn number.
    We call this kind of strategy predetermined because One is not reacting to Two's moves, they are just running through a pre-planned script.
    Formally it is a function \(\sigma: \omega \to \mathcal A\).
    If One has a winning predetermined strategy, we write \(\One \underset{\text{pre}}{\uparrow} \textsf{G}_1(\mathcal A, \mathcal B)\).
    \item
    A \emph{Markov strategy} for Two is a strategy which only considers the most recent move of player One and the current turn number.
    Formally it is a function \(\tau:\mathcal A \times \omega \to \bigcup \mathcal A\).
    If Two has a winning Markov strategy, we write \(\Two \underset{\text{mark}}{\uparrow} \textsf{G}_1(\mathcal A, \mathcal B)\).
    \item
    If there is a single element \(x_0 \in \mathcal A\) so that the constant function with value \(x_0\) is a winning strategy for One, we say that One has a \emph{constant winning strategy}, denoted by \(\One \underset{\text{cnst}}{\uparrow} \textsf{G}_1(\mathcal A, \mathcal B)\).
    \end{itemize}
    These definitions can be extended to \(\textsf{G}_{\text{fin}}(\mathcal A, \mathcal B)\) in the obvious way.
\end{definition}

\begin{definition}
    The reader may be more familiar with selection principles than selection games.
    Let \(\mathcal A\) and \(\mathcal B\) be collections.
    The selection principle \(\textsf{S}_1(\mathcal A, \mathcal B)\) for a space \(X\) is the following property:
    Given any sequence \(\langle A_n : n \in \omega \rangle\) from \(\mathcal A\), there exists \(\{x_n : n \in \omega \}\) with \(x_n \in A_n\) for each \(n \in \omega\) so that \(\{ x_n : n \in \omega \} \in \mathcal B\).
    \(\textsf{S}_{\text{fin}}(\mathcal A, \mathcal B)\) is similarly defined, but with finite selections instead of single selections.
    We will use the notation \(X \models \textsf{S}_\square(\mathcal A, \mathcal B)\) to denote that the selection principle \(\textsf{S}_\square(\mathcal A, \mathcal B)\) holds for \(X\).
\end{definition}
Note that
\begin{itemize}
    \item \(\textsf{S}_{\text{fin}}(\mathcal O, \mathcal O)\) is the Menger property.
    \item \(\textsf{S}_{1}(\mathcal O, \mathcal O)\) is the Rothberger property.
\end{itemize}

\begin{remark}
    In general, \(\textsf{S}_\square(\mathcal A, \mathcal B)\) holds if and only if \(\text{I} \underset{\text{pre}}{\not\uparrow} \textsf{G}_\square(\mathcal{A},\mathcal{B})\) where \(\square \in \{1 , \text{fin} \}\).
    See \cite[Prop. 15]{ClontzDuality}.
\end{remark}

\begin{definition}
    An even more fundamental type of selection is inspired by the Lindel\"{o}f property.
    Let \(\mathcal A\) and \(\mathcal B\) be collections.
    Then \(\genfrac(){0pt}{1}{\mathcal A}{\mathcal B}\) means that, for every \(A \in \mathcal A\), there exists \(B \subseteq A\) so that \(B \in \mathcal B\).
    Scheepers calls this a \emph{Bar-Ilan selection principle} in \cite{Scheepers2003}.
\end{definition}

\begin{remark} \label{rmk:ClontzBarIlan}
    Let \(\mathcal A\) and \(\mathcal B\) be collections.
    We let \(\text{ctbl}(\mathcal B) = \{B \in \mathcal B: |B| \leq \omega\}\).
    Then One fails to have a constant strategy in \(\textsf{G}_1(\mathcal A, \mathcal B)\) if and only if \(\genfrac(){0pt}{1}{\mathcal A}{\text{ctbl}(\mathcal B)}\) holds as shown in \cite[Prop. 15]{ClontzDuality}.
\end{remark}
In fact,
\begin{lemma} \label{lem:Lindelof}
    For any space \(X\),
    \[
        X \models \genfrac(){0pt}{0}{\mathcal A}{\text{ctbl}(\mathcal B)}
        \iff \text{I} \underset{\text{cnst}}{\not\uparrow} \textsf{G}_1(\mathcal A, \mathcal B)
        \iff \text{I} \underset{\text{cnst}}{\not\uparrow} \textsf{G}_{\text{fin}}(\mathcal A, \mathcal B)
    \]
\end{lemma}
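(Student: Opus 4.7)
The plan is to observe that for a \emph{constant} strategy by One, the two games collapse to the same combinatorial condition, so both equivalences reduce to the same bookkeeping. The first equivalence is essentially Remark \ref{rmk:ClontzBarIlan} (citing \cite[Prop.\ 15]{ClontzDuality}), so the real content is that the single-selection and finite-selection versions of $\text{I} \underset{\text{cnst}}{\uparrow}$ coincide.

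First I would unpack what it means for One to have a winning constant strategy. Suppose One always plays the fixed set $A \in \mathcal A$. In $\textsf{G}_1(\mathcal A, \mathcal B)$, Two's plays produce a set $\{x_n : n \in \omega\} \subseteq A$ which is a countable subset of $A$; conversely, any countable $B \subseteq A$ can be enumerated (with repetitions if necessary) as $\{x_n : n \in \omega\}$. So the constant strategy $A$ is winning for One in $\textsf{G}_1$ iff no countable subset of $A$ lies in $\mathcal B$, i.e.\ iff no $B \in \text{ctbl}(\mathcal B)$ satisfies $B \subseteq A$. Negating over $A$ then yields the equivalence
\[
    X \models \genfrac(){0pt}{0}{\mathcal A}{\text{ctbl}(\mathcal B)} \iff \text{I} \underset{\text{cnst}}{\not\uparrow} \textsf{G}_1(\mathcal A, \mathcal B),
\]
which can also be invoked directly from Remark \ref{rmk:ClontzBarIlan}.

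Next, I would run the identical analysis for $\textsf{G}_{\text{fin}}$. If One plays the constant value $A$, Two's output is $\bigcup_{n \in \omega} \mathcal F_n$ with each $\mathcal F_n \in [A]^{<\omega}$; this is again an at-most-countable subset of $A$, and conversely every countable $B \subseteq A$ arises this way (enumerate $B = \{b_n\}$ and set $\mathcal F_n = \{b_n\}$, or use $\mathcal F_0 = B$ and $\mathcal F_n = \varnothing$ if $B$ is finite). Hence the constant strategy $A$ is winning for One in $\textsf{G}_{\text{fin}}$ exactly when no countable subset of $A$ is in $\mathcal B$ — the same condition as for $\textsf{G}_1$. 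Therefore $\text{I} \underset{\text{cnst}}{\uparrow} \textsf{G}_1(\mathcal A, \mathcal B)$ and $\text{I} \underset{\text{cnst}}{\uparrow} \textsf{G}_{\text{fin}}(\mathcal A, \mathcal B)$ hold for the same witnesses $A$, which gives the third equivalence.

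There is no real obstacle here: the only mildly subtle point is being careful that finite unions of finite sets and singletons can, in both games, realize every countable subset of $A$ (including the finite ones), so that the range of Two's possible outputs is exactly $\{B \subseteq A : |B| \leq \omega\}$ in both versions. Once this is observed, both equivalences follow from the same two-line argument, and no separate forward/backward implications need to be treated.
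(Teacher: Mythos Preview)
Your proposal is correct and follows essentially the same approach as the paper. The paper invokes Remark~\ref{rmk:ClontzBarIlan} for the first equivalence and then argues the \(\textsf{G}_1\)/\(\textsf{G}_{\text{fin}}\) equivalence by noting that a play by Two in \(\textsf{G}_{\text{fin}}\) against a constant \(A\) yields a countable subset of \(A\) and hence can be re-enumerated as a play in \(\textsf{G}_1\); you do the same, only more explicitly, by characterizing the set of possible outputs for Two against a constant \(A\) as exactly the countable subsets of \(A\) in both games.
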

\begin{proof}
    By Remark \ref{rmk:ClontzBarIlan}, the only thing to show is the equivalence of the non-existence of a constant strategy for One in the single selection and finite selection games.
    This equivalence can be seen by the fact that any play by Two in the finite selection game can be translated to a play in the single selection game since a countable collection of finite sets is countable.
\end{proof}

Note that, in the language of \cite{KocinacSelectedResults},
\begin{itemize}
    \item
    \(\genfrac(){0pt}{0}{\mathcal O}{\text{ctbl}(\mathcal O)}\) is the Lindel{\"{o}}f property.
    \item
    \(\genfrac(){0pt}{0}{\Omega}{\text{ctbl}(\Omega)}\) is the \(\omega\)-Lindel{\"{o}}f property, most commonly known as the \(\epsilon\)-space property.
    \item
    \(\genfrac(){0pt}{0}{\mathcal K}{\text{ctbl}(\mathcal K)}\) is the \(k\)-Lindel{\"{o}}f property.
\end{itemize}

\begin{definition}
We say that \(\mathcal G\) is a \emph{selection game} if there exist classes \(\mathcal A\), \(\mathcal B\) and \(\square \in \{ 1 , \text{fin}\}\) so that \(\mathcal G = \textsf{G}_\square(\mathcal A, \mathcal B)\).
\end{definition}

\subsection{Game-theoretic Tools}

\begin{definition}
  We say that two selection games \(\mathcal G\) and \(\mathcal H\) are \emph{equivalent}, denoted \(\mathcal G \equiv \mathcal H\), if the following hold:
  \begin{itemize}
      \item
      \(\text{II} \underset{\text{mark}}{\uparrow} \mathcal G \iff \text{II} \underset{\text{mark}}{\uparrow} \mathcal H\)
      \item
      \(\text{II} \uparrow \mathcal G \iff \text{II} \uparrow \mathcal H\)
      \item
      \(\text{I} \not\uparrow \mathcal G \iff \text{I} \not\uparrow \mathcal H\)
      \item
      \(\text{I} \underset{\text{pre}}{\not\uparrow} \mathcal G \iff \text{I} \underset{\text{pre}}{\not\uparrow} \mathcal H\)
      \item
      \(\text{I} \underset{\text{cnst}}{\not\uparrow} \mathcal G \iff \text{I} \underset{\text{cnst}}{\not\uparrow} \mathcal H\)
  \end{itemize}
\end{definition}

\begin{definition}
  Given selection games \(\mathcal G\) and \(\mathcal H\), we say that \(\mathcal G \leq_{\text{II}} \mathcal H\) if the following implications hold:
  \begin{itemize}
      \item
      \(\text{II} \underset{\text{mark}}{\uparrow} \mathcal G \implies \text{II} \underset{\text{mark}}{\uparrow} \mathcal H\)
      \item
      \(\text{II} \uparrow \mathcal G \implies \text{II} \uparrow \mathcal H\)
      \item
      \(\text{I} \not\uparrow \mathcal G \implies \text{I} \not\uparrow \mathcal H\)
      \item
      \(\text{I} \underset{\text{pre}}{\not\uparrow} \mathcal G \implies \text{I} \underset{\text{pre}}{\not\uparrow} \mathcal H\)
      \item
        \(\text{I} \underset{\text{cnst}}{\not\uparrow} \mathcal G \implies \text{I} \underset{\text{cnst}}{\not\uparrow} \mathcal H\)
  \end{itemize}
\end{definition}
Note that \(\leq_{\text{II}}\) is transitive and that if \(\mathcal G \leq_{\text{II}} \mathcal H\) and \(\mathcal H \leq_{\text{II}} \mathcal G\), then \(\mathcal G \equiv \mathcal H\).
We use the subscript of II since each implication is related to a transference of winning plays by Two.
Also, for classes \(\mathcal A\) and \(\mathcal B\),
\[
    \textsf{G}_1(\mathcal A, \mathcal B) \leq_{\text{II}} \textsf{G}_{\text{fin}}(\mathcal A , \mathcal B).
\]

We now recall the Translation Theorems that will be relevant in the sequel.

\begin{theorem}[\cite{CHHyperspaces}] \label{Theorem16}
    Let \(\mathcal A\), \(\mathcal B\), \(\mathcal C\), and \(\mathcal D\) be collections.
    Suppose there are functions
    \begin{itemize}
        \item \(\Tone:\mathcal B \to \mathcal A\) and
        \item \(\Ttwo:\left[\bigcup \mathcal A \right]^{<\omega} \times \mathcal B \to \left[\bigcup \mathcal B \right]^{<\omega}\)
    \end{itemize}
    for each \(n \in \omega\) so that
    \begin{enumerate}[label=(P\arabic*)]
        \item \label{FiniteTransA} If \(\mathcal F \in [\Tone(B)]^{<\omega}\), then \(\Ttwo(\mathcal F,B) \in [B]^{<\omega}\)
        \item \label{FiniteTransB} If \(\mathcal F_n \in [\Tone(B_n)]^{<\omega}\) for each \(n \in \omega\) and \(\bigcup_{n \in \omega} \mathcal F_n \in \mathcal C\), then \(\bigcup_{n \in \omega} \Ttwo(\mathcal F_n,B_n) \in \mathcal D\).
    \end{enumerate}
    Then \(\textsf{G}_{\text{fin}}(\mathcal A, \mathcal C) \leq_{\Two} \textsf{G}_{\text{fin}}(\mathcal B, \mathcal D)\).
\end{theorem}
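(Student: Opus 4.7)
The plan is to verify each of the five defining implications of \(\leq_{\Two}\) separately, in each case translating a strategy in one game into a strategy in the other by composing with \(\Tone\) (to send One's moves from the \(\mathcal B\)-game into \(\mathcal A\)-game moves) and \(\Ttwo\) (to send Two's responses from the \(\mathcal A\)-game back into \(\mathcal B\)-game responses). Hypothesis (P1) is invoked whenever a constructed finite set must be certified as a legal move in the target game, and hypothesis (P2) is invoked once per case to convert a winning run on one side into a winning run on the other.

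The three easier cases are essentially formulaic. Given a winning strategy \(\tau\) for Two in \(\textsf{G}_{\text{fin}}(\mathcal A, \mathcal C)\), I would define \(\tau'\) in \(\textsf{G}_{\text{fin}}(\mathcal B, \mathcal D)\) by
\[\tau'(B_0, \ldots, B_n) = \Ttwo\bigl(\tau(\Tone(B_0), \ldots, \Tone(B_n)),\, B_n\bigr),\]
and likewise use the Markov analogue \(\tau'(B, n) = \Ttwo(\tau(\Tone(B), n), B)\), applying (P1) to legality and (P2) to the winning condition. For the predetermined and constant non-existence implications for One (arguing contrapositively), I would take a winning predetermined strategy \(\sigma\) for One in \(\textsf{G}_{\text{fin}}(\mathcal B, \mathcal D)\) and set \(\sigma'(n) = \Tone(\sigma(n))\), with the obvious analogue in the constant case. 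Any hypothetical defeating play \((\mathcal F_n)\) against \(\sigma'\) then yields, via \(\mathcal G_n := \Ttwo(\mathcal F_n, \sigma(n))\), a defeating play against \(\sigma\) by (P1) and (P2), contradicting the hypothesis.

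The only case requiring more than mechanical bookkeeping is the perfect-information implication \(\One \not\uparrow \textsf{G}_{\text{fin}}(\mathcal A, \mathcal C) \implies \One \not\uparrow \textsf{G}_{\text{fin}}(\mathcal B, \mathcal D)\), again via contrapositive. Given a perfect-information winning strategy \(\sigma\) for One in the \(\mathcal B\)-game, I would define \(\sigma'\) in the \(\mathcal A\)-game by simultaneously building a shadow \(\mathcal B\)-game play by recursion on Two's history: put \(B_0 = \sigma(\emptyset)\) and \(\sigma'(\emptyset) = \Tone(B_0)\), and then inductively set \(\mathcal G_k = \Ttwo(\mathcal F_k, B_k)\), \(B_{k+1} = \sigma(\mathcal G_0, \ldots, \mathcal G_k)\), and \(\sigma'(\mathcal F_0, \ldots, \mathcal F_k) = \Tone(B_{k+1})\). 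The step requiring attention is that \(\sigma'\) really is a well-defined function of Two's \(\mathcal A\)-game history alone, which holds because \((B_k)\) and \((\mathcal G_k)\) are determined recursively by \(\mathcal F_0, \ldots, \mathcal F_{k-1}\) together with the fixed \(\sigma\) and \(\Ttwo\). Once that is in place, (P2) converts any defeat of \(\sigma'\) by \((\mathcal F_n)\) into a defeat of \(\sigma\) by \((\mathcal G_n)\), a contradiction. This recursive simultaneous-simulation is the principal obstacle; the remainder of the argument is careful accounting of turn indices to ensure that the appropriate \(\Tone\) and \(\Ttwo\) are applied at each turn.
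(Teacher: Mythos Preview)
Your proposal is correct and follows the same translation-of-strategies approach that the paper (and its cited reference) uses. The paper itself defers four of the five implications to \cite{CHHyperspaces} and only writes out the constant-strategy case, arguing it directly rather than by contrapositive; your contrapositive version of that case is logically identical, and your treatment of the remaining implications matches what one finds in the cited source.
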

\begin{proof}
    Most of of the proof of this is in \cite[Theorem 16]{CHHyperspaces}.
    The only thing that remains to be proved is the implication
    \[
        \text{I} \underset{\text{cnst}}{\not\uparrow} \textsf{G}_{\text{fin}}(\mathcal A, \mathcal C) \implies \text{I} \underset{\text{cnst}}{\not\uparrow} \textsf{G}_{\text{fin}}(\mathcal B, \mathcal D).
    \]
    Suppose One does not have a constant winning strategy in \(\textsf{G}_{\text{fin}}(\mathcal A, \mathcal C)\) and let \(B \in \mathcal B\) be arbitrary.
    As \(\Tone(B) \in \mathcal A\), there exist \(\mathcal F_n \in \left[ \Tone(B) \right]^{<\omega}\) so that \(\bigcup_{n\in\omega} \mathcal F_n \in \mathcal C\).
    Hence, \(\bigcup_{n\in\omega} \Ttwo(\mathcal F_n , B) \in \mathcal D\).
    As \(B \in \mathcal B\) was arbitrary, we see that One does not have a constant winning strategy in \(\textsf{G}_{\text{fin}}(\mathcal B, \mathcal D)\).
\end{proof}

\begin{corollary}[\cite{CHHyperspaces}] \label{Corollary17}
    Let \(\mathcal A\), \(\mathcal B\), \(\mathcal C\), and \(\mathcal D\) be collections.
    Suppose there are functions
    \begin{itemize}
        \item \(\Tone :\mathcal B \to \mathcal A\) and
        \item \(\Ttwo : \left(\bigcup \mathcal A \right) \times \mathcal B \to \bigcup \mathcal B\)
    \end{itemize}
    for each \(n \in \omega\) so that the following two properties hold.
    \begin{enumerate}[label={(Ft\arabic*)}]
        \item \label{translationPropI} If \(x \in \Tone(B)\), then \(\Ttwo(x,B) \in B\).
        \item \label{translationPropII} If \(\mathcal F_n \in \left[\Tone(B_n)\right]^{<\omega}\) and \(\bigcup_{n \in \omega} \mathcal F_n \in \mathcal C\), then \(\bigcup_{n \in \omega} \left\{ \Ttwo(x,B_n) : x \in \mathcal F_n \right\} \in \mathcal D\).
    \end{enumerate}
    Then, for \(\square \in \{1,\text{fin}\}\), \(G_\square(\mathcal A,\mathcal C) \leq_{\Two} G_\square(\mathcal B, \mathcal D)\).
\end{corollary}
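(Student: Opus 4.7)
My plan is to handle the two cases $\square = \text{fin}$ and $\square = 1$ separately: the finite-selection case reduces directly to Theorem \ref{Theorem16}, while the single-selection case requires a parallel argument that proceeds one strategy strength at a time.

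For the finite-selection case, the idea is to lift the pointwise translation $\Ttwo$ to a finite-set translation. Define $\widetilde{T}_{\Two,n}:[\bigcup\mathcal A]^{<\omega}\times \mathcal B \to [\bigcup\mathcal B]^{<\omega}$ by setting $\widetilde{T}_{\Two,n}(\mathcal F,B) = \{\Ttwo(x,B):x\in\mathcal F\}$. Condition \ref{translationPropI} immediately yields property \ref{FiniteTransA}: if $\mathcal F \in [\Tone(B)]^{<\omega}$ then each $\Ttwo(x,B)\in B$, so $\widetilde{T}_{\Two,n}(\mathcal F,B) \in [B]^{<\omega}$. Condition \ref{translationPropII} is, verbatim, property \ref{FiniteTransB} for this $\widetilde{T}_{\Two,n}$. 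Thus Theorem \ref{Theorem16} applies with the pair $(\Tone,\widetilde{T}_{\Two,n})$, giving $\textsf{G}_{\text{fin}}(\mathcal A,\mathcal C)\leq_{\Two}\textsf{G}_{\text{fin}}(\mathcal B,\mathcal D)$.

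For the single-selection case, the scheme is to translate each winning object for Two in $\textsf{G}_1(\mathcal A,\mathcal C)$ into a corresponding one in $\textsf{G}_1(\mathcal B,\mathcal D)$ via the recipe: when One plays $B_n \in \mathcal B$, internally convert it to $\Tone(B_n)\in\mathcal A$, invoke the given object for $\textsf{G}_1(\mathcal A,\mathcal C)$ to produce an element $x_n\in\Tone(B_n)$, and then output $\Ttwo(x_n,B_n)\in B_n$ (which lies in $B_n$ by \ref{translationPropI}). Applying \ref{translationPropII} to the singletons $\mathcal F_n=\{x_n\}$ shows that $\{x_n:n\in\omega\}\in\mathcal C$ implies $\{\Ttwo(x_n,B_n):n\in\omega\}\in\mathcal D$, which is exactly the winning condition for the translated play. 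This recipe respects each restriction on strategies: full history, Markov (it uses only the latest $B_n$ plus turn number), predetermined (it uses only the turn number), and constant (for the last two implications on One's side, one starts from an arbitrary $B\in\mathcal B$, replaces it by $\Tone(B)\in\mathcal A$, and applies the Bar-Ilan-style conclusion as in the proof of Theorem \ref{Theorem16}).

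I do not anticipate any real obstacle; the content of the corollary is essentially bookkeeping on top of Theorem \ref{Theorem16}. The only mildly delicate point is the constant-strategy implication for One, where one must mimic the short argument given at the end of the proof of Theorem \ref{Theorem16}: from $\Tone(B)\in\mathcal A$ one extracts a countable selection witnessing $\binom{\mathcal A}{\text{ctbl}(\mathcal C)}$, then transports it through $\Ttwo(\cdot,B)$ to witness $\binom{\mathcal B}{\text{ctbl}(\mathcal D)}$; in the single-selection setting this is immediate since each singleton $\{x_n\}$ maps to the singleton $\{\Ttwo(x_n,B)\}$.
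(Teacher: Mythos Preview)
Your proposal is correct. Note that the paper does not actually supply a proof of Corollary~\ref{Corollary17}; it is stated as a result from \cite{CHHyperspaces} and left without argument. Your approach is exactly the natural one suggested by its placement as a corollary to Theorem~\ref{Theorem16}: for \(\square=\text{fin}\) you lift the pointwise map \(\Ttwo\) to finite sets and invoke Theorem~\ref{Theorem16} directly, and for \(\square=1\) you run the parallel strategy-translation argument level by level. Both parts are sound, and the handling of the constant-strategy implication for One correctly mirrors the short argument the paper gives at the end of its proof of Theorem~\ref{Theorem16}.
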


\begin{corollary}[\cite{CHHyperspaces}] \label{Corollary18}
    Let \(\mathcal A\), \(\mathcal B\), \(\mathcal C\), and \(\mathcal D\) be collections.
    Suppose there is a map \(\varphi:\left[\bigcup \mathcal B\right] \times \omega \to \bigcup \mathcal A\) so that the following two conditions hold.
    \begin{itemize}
        \item For all \(B \in \mathcal B\) and all \(n\in\omega\), \(\{\varphi(y,n) : y \in B\} \in \mathcal A\).
        \item If \(\mathcal G_n \in [B_n]^{<\omega}\) where \(B_n \in \mathcal B\) for each \(n \in \omega\) and \(\bigcup_{n \in \omega} \varphi[\mathcal G_n \times \{n\}] \in \mathcal C\), then \(\bigcup_{n \in \omega} \mathcal G_n \in \mathcal D\).
    \end{itemize}
    Then, for \(\square \in \{1,\text{fin}\}\), \(G_\square(\mathcal A,\mathcal C) \leq_{\Two} G_\square(\mathcal B, \mathcal D)\).
\end{corollary}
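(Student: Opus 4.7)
The plan is to derive Corollary 18 as a direct application of Corollary 17 by constructing \(\Tone\) and \(\Ttwo\) out of the given \(\varphi\). For each \(n \in \omega\) and each \(B \in \mathcal B\), set
\[
  \Tone(B) = \{\varphi(y,n) : y \in B\},
\]
which lies in \(\mathcal A\) by the first bullet of the hypothesis. For \(\Ttwo\), I use the Axiom of Choice: for each triple \((n, B, x)\) with \(x \in \Tone(B)\), pick some \(y_{x,B,n} \in B\) satisfying \(\varphi(y_{x,B,n}, n) = x\), and define \(\Ttwo(x, B) = y_{x,B,n}\). On the (irrelevant) set of pairs \((x, B)\) with \(x \notin \Tone(B)\), let \(\Ttwo(x,B)\) be an arbitrary element of \(B\). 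This is essentially a choice of a section for the map \(\varphi(\cdot, n) \restriction B \to \Tone(B)\).

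Next I verify the two hypotheses of Corollary 17. Property \ref{translationPropI} holds by construction, since whenever \(x \in \Tone(B)\) the chosen preimage \(\Ttwo(x,B)\) lies in \(B\). For property \ref{translationPropII}, suppose \(\mathcal F_n \in [\Tone(B_n)]^{<\omega}\) and \(\bigcup_{n} \mathcal F_n \in \mathcal C\). Put \(\mathcal G_n = \{\Ttwo(x, B_n) : x \in \mathcal F_n\}\), a finite subset of \(B_n\). The defining property of the section gives
\[
  \varphi[\mathcal G_n \times \{n\}] = \{\varphi(\Ttwo(x,B_n), n) : x \in \mathcal F_n\} = \mathcal F_n,
\]
so \(\bigcup_{n} \varphi[\mathcal G_n \times \{n\}] = \bigcup_n \mathcal F_n \in \mathcal C\). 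The second hypothesis of Corollary 18 now yields \(\bigcup_n \mathcal G_n = \bigcup_n \{\Ttwo(x, B_n) : x \in \mathcal F_n\} \in \mathcal D\), which is exactly \ref{translationPropII}.

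With both hypotheses of Corollary 17 established, its conclusion \(G_\square(\mathcal A, \mathcal C) \leq_{\Two} G_\square(\mathcal B, \mathcal D)\) for \(\square \in \{1, \text{fin}\}\) is the desired conclusion. I expect no real obstacle here — the only delicate point is choosing the section \(\Ttwo\), and once that is done the computation \(\varphi[\mathcal G_n \times \{n\}] = \mathcal F_n\) reduces the second hypothesis of Corollary 18 to the second hypothesis of Corollary 17 in a transparent way.
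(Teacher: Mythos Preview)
Your proof is correct, and reducing to Corollary~\ref{Corollary17} via a choice of section for \(\varphi(\cdot,n)\restriction B\) is exactly the natural derivation; the paper itself does not supply a proof here (it simply cites \cite{CHHyperspaces}), but the ``Corollary'' label indicates this is the intended route. The key identity \(\varphi[\mathcal G_n \times \{n\}] = \mathcal F_n\) holds because your section is injective (distinct \(x\)'s cannot share a preimage, as applying \(\varphi(\cdot,n)\) would recover distinct values), so both inclusions go through cleanly.
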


\begin{definition}
    Consider a class \(\mathcal C\) and a collection \(C \in \mathcal C\). We say that \(C'\) is an \emph{enlargement} of \(C\) if \(C' \subseteq \bigcup \mathcal C\) and \((\forall x \in C)(\exists y \in C^\prime)[x \subseteq y]\).
\end{definition}
\begin{definition}
    We say that a class \(\mathcal C\) is \emph{closed under enlargement} if the following property holds: if \(C \in \mathcal C\) and \(C^\prime\) is an enlargement of \(C\), then \(C^\prime \in \mathcal C\).
\end{definition}
Note that \(\mathcal O(X,\mathcal A)\) for any family of closed sets \(\mathcal A\) of a space \(X\) is closed under enlargement.
\begin{definition}
    Let \(\mathcal A\) and \(\mathcal B\) be classes and \(\varphi : \bigcup \mathcal B \to \bigcup \mathcal A\).
    For \(A \in \mathcal A\), we define the \emph{\(\varphi\)-refinement of A} to be
    \[
        \left\{ y \in \bigcup \mathcal B :(\exists x \in A)[\varphi(y) \subseteq x] \right\}.
    \]
\end{definition}
\begin{corollary} \label{bestCorollary}
    Suppose \(\mathcal A\), \(\mathcal B\), \(\mathcal C\), and \(\mathcal D\) are classes so that \(\bigcup \mathcal C \subseteq \bigcup \mathcal A\) and \(\bigcup \mathcal D \subseteq \bigcup \mathcal B\).
    Suppose there is a function \(\varphi : \bigcup \mathcal B \to \bigcup \mathcal A\) so that, for any \(B \in \mathcal B\), \(\varphi[B] \in \mathcal A\) and, for any \(E \subseteq \bigcup \mathcal B\) so that \(\varphi[E] \in \mathcal C\), \(E \in \mathcal D\).
    Then, for \(\square \in \{ 1 , \text{fin} \}\), \(\textsf{G}_\square(\mathcal A, \mathcal C) \leq_{\text{II}} \textsf{G}_\square(\mathcal B, \mathcal D)\).

    If, in addition,
    \begin{itemize}
        \item
        \(\mathcal C\) is closed under enlargement,
        \item
        for any \(A \in \mathcal A\), the \(\varphi\)-refinement of \(A\) is an element of \(\mathcal B\), and
        \item
        for any \(E \subseteq \bigcup \mathcal B\), \(E \in \mathcal D\) implies \(\varphi[E] \in \mathcal C\),
    \end{itemize}
    then \(\textsf{G}_\square(\mathcal A, \mathcal C) \equiv \textsf{G}_\square(\mathcal B, \mathcal D)\).
\end{corollary}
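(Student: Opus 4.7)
The plan is to derive both halves of the corollary from Corollary \ref{Corollary17}, reading $\varphi$ in one direction and choosing a ``right inverse'' in the other. For the first implication $\textsf{G}_\square(\mathcal A, \mathcal C) \leq_{\Two} \textsf{G}_\square(\mathcal B, \mathcal D)$, I set $\Tone(B) = \varphi[B]$, which lies in $\mathcal A$ by hypothesis, and for each $x \in \varphi[B]$ use choice to pick $\Ttwo(x, B) \in B$ with $\varphi(\Ttwo(x, B)) = x$ (and define $\Ttwo(x,B)$ arbitrarily in $B$ otherwise). Then \ref{translationPropI} is built in, and for \ref{translationPropII}, given $\mathcal F_n \in [\varphi[B_n]]^{<\omega}$ with $\bigcup_n \mathcal F_n \in \mathcal C$, the set $E := \bigcup_n \{\Ttwo(x, B_n) : x \in \mathcal F_n\}$ satisfies $\varphi[E] = \bigcup_n \mathcal F_n \in \mathcal C$, so $E \in \mathcal D$ by the second hypothesis on $\varphi$.

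For the equivalence, I would apply Corollary \ref{Corollary17} a second time, now with the roles of $(\mathcal A, \mathcal C)$ and $(\mathcal B, \mathcal D)$ interchanged. Take $\Tone(A)$ to be the $\varphi$-refinement of $A$, which is in $\mathcal B$ by the second added hypothesis, and for each $y \in \Tone(A)$ choose $\Ttwo(y, A) \in A$ with $\varphi(y) \subseteq \Ttwo(y, A)$. Property \ref{translationPropI} then holds by construction. For \ref{translationPropII}, suppose $\mathcal F_n \in [\Tone(A_n)]^{<\omega}$ with $\bigcup_n \mathcal F_n \in \mathcal D$; the third added hypothesis yields $\varphi\bigl[\bigcup_n \mathcal F_n\bigr] \in \mathcal C$, and the set $E' := \bigcup_n \{\Ttwo(y, A_n) : y \in \mathcal F_n\}$ is an enlargement of $\varphi\bigl[\bigcup_n \mathcal F_n\bigr]$, because for each $y$ in the union we have $\varphi(y) \subseteq \Ttwo(y, A_n)$. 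Closure of $\mathcal C$ under enlargement then delivers $E' \in \mathcal C$, as required.

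The subtle step, and the main obstacle worth flagging, is exactly this last verification in the reverse direction: because $\Ttwo(y, A)$ only dominates $\varphi(y)$ rather than equalling it, one cannot pull membership in $\mathcal C$ back along $\varphi$ literally, as one could in the forward half. The closure-under-enlargement hypothesis on $\mathcal C$ is precisely what makes this obstruction disappear, and matching it up cleanly against the form of \ref{translationPropII} is the only nontrivial bookkeeping; everything else reduces to a direct application of the Translation Theorem.
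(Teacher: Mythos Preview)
Your proof is correct and the second half matches the paper exactly: the paper likewise takes $\Tone(A)$ to be the $\varphi$-refinement, picks $\Ttwo(y,A)\in A$ with $\varphi(y)\subseteq\Ttwo(y,A)$, and then uses the third added hypothesis together with closure under enlargement to verify \ref{translationPropII}.

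The one genuine difference is in the first half. You invoke Corollary~\ref{Corollary17} and use choice to select, for each $x\in\varphi[B]$, a preimage $\Ttwo(x,B)\in B$ with $\varphi(\Ttwo(x,B))=x$; this works because then $\varphi[E]=\bigcup_n\mathcal F_n\in\mathcal C$ forces $E\in\mathcal D$. The paper instead appeals to Corollary~\ref{Corollary18}, using $\varphi$ itself (ignoring the turn index) as the map: for $B\in\mathcal B$ one has $\{\varphi(y):y\in B\}=\varphi[B]\in\mathcal A$, and if $\mathcal G_n\in[B_n]^{<\omega}$ with $\bigcup_n\varphi[\mathcal G_n]=\varphi\bigl[\bigcup_n\mathcal G_n\bigr]\in\mathcal C$, then $\bigcup_n\mathcal G_n\in\mathcal D$ directly. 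The paper's route avoids the auxiliary choice of preimages and is marginally cleaner, but both arguments are short and yield the same conclusion.
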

\begin{proof}
    The first condition of Corollary \ref{Corollary18} holds.
    Now, suppose \(\mathcal G_n \in [B_n]^{<\omega}\) where \(B_n \in \mathcal B\) for all \(n \in \omega\) is so that \(\bigcup_{n\in\omega} \varphi[\mathcal G_n] \in \mathcal C\).
    Then \(\bigcup_{n\in\omega} \varphi[\mathcal G_n] = \varphi\left[\bigcup_{n\in\omega} \mathcal G \right]\in \mathcal C\) implies that \(\bigcup_{n\in\omega}\mathcal G_n \in\mathcal D\).
    Hence, \(\textsf{G}_\square(\mathcal A, \mathcal C) \leq_{\text{II}} \textsf{G}_\square(\mathcal B, \mathcal D)\).

    For the remainder, we use Corollary \ref{Corollary17}.
    Define \(\Tone: \mathcal A \to \mathcal B\) to be the \(\varphi\)-refinment of \(\mathcal A\).
    Then define \(\Ttwo : (\bigcup \mathcal B) \times \mathcal A \to \bigcup \mathcal A\) in the following way.
    If \(y \in \bigcup \mathcal B\) and \(A \in \mathcal A\) are so that there exists \(x \in A\) with \(\varphi(y) \subseteq x\), let \(\Ttwo(y,A) \in A\) be so that \(\varphi(y) \subseteq \Ttwo(y,A)\).
    Otherwise, let \(\Ttwo(y,A) = y\).

    By our definitions, if \(y \in \Tone(A)\), then \(\Ttwo(y,A) \in A\).
    So suppose, for every \(n \in \omega\), \(y_{1,n} , \ldots , y_{k_n,n} \in \Tone(A_n)\) are so that \(\bigcup_{n\in\omega} \{y_{1,n} , \ldots , y_{k_n,n}\} \in \mathcal D\).
    By the hypotheses, we have that \(\bigcup_{n\in\omega} \{\varphi(y_{1,n}) , \ldots , \varphi(y_{k_n,n})\} \in \mathcal C\).
    Observe that \(\varphi(y_{j,n}) \subseteq \Ttwo(y_{j,n} , A_n)\) for any \(n\in\omega\) and \(1 \leq j \leq k_n\) which provides that \(\bigcup_{n\in\omega}\{\Ttwo(y_{1,n},A_n) , \ldots , \Ttwo(y_{k_n,n},A_n)\} \in \mathcal C\) since \(\mathcal C\) is closed under enlargement.
\end{proof}

As we will see in the applications, Corollary \ref{bestCorollary} is capturing game equivalence under the condition that there is an adequate way to translate between cover types via some translation of open sets.

As an introductory application, the translation of winning plays for Two is monotone with respect to closed subspaces, just as one would expect.
\begin{lemma} \label{lem:SubspaceMonotone}
    Let \(X\) be a space, \(\mathcal A\) be a family of closed subsets of \(X\), and \(Y \subseteq X\) be closed so that \(Y \not\in \mathcal A\).
    Then \(\mathcal B := \{ A \cap Y : A \in \mathcal A \}\) is a family of closed subsets of \(Y\) and, for \(\square \in \{1,\text{fin}\}\),
    \[
        \textsf{G}_\square(\mathcal O(X,\mathcal A), \mathcal O(X,\mathcal A))
        \leq_{\text{II}} \textsf{G}_\square(\mathcal O(Y,\mathcal B), \mathcal O(Y,\mathcal B)).
    \]
\end{lemma}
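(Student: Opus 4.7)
The approach is to apply Corollary~\ref{bestCorollary}, taking the first and third arguments (its \(\mathcal{A}\) and \(\mathcal{C}\)) both to be \(\mathcal{O}(X, \mathcal{A})\), and the second and fourth (its \(\mathcal{B}\) and \(\mathcal{D}\)) both to be \(\mathcal{O}(Y, \mathcal{B})\). The conclusion of that corollary is then exactly the desired \(\leq_{\text{II}}\)-inequality. First I would note that \(\mathcal{B}\) really is a family of closed subsets of \(Y\), since the intersection of the closed sets \(A\) and \(Y\) in \(X\) is closed in the subspace \(Y\).

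The main ingredient is a translation map \(\varphi : \bigcup \mathcal{O}(Y, \mathcal{B}) \to \bigcup \mathcal{O}(X, \mathcal{A})\). For each open set \(V\) of \(Y\) appearing in some \(\mathcal{B}\)-cover, use choice to fix an open set \(V^\ast \subseteq X\) with \(V^\ast \cap Y = V\), and define \(\varphi(V) = V^\ast \cup (X \setminus Y)\); this is open in \(X\) because \(Y\) is closed. Adjoining \(X \setminus Y\) is the key design choice: \(V^\ast\) by itself need not cover any \(A \in \mathcal{A}\), but \(V^\ast \cup (X \setminus Y)\) automatically covers \(A\) as soon as \(A \cap Y \subseteq V\).

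To finish I would verify the two hypotheses of Corollary~\ref{bestCorollary}. For the first, fix \(\mathcal{V} \in \mathcal{O}(Y, \mathcal{B})\) and any \(A \in \mathcal{A}\); choose \(V \in \mathcal{V}\) with \(A \cap Y \subseteq V\), and observe \(A \subseteq V \cup (X \setminus Y) \subseteq \varphi(V)\). Non-triviality of \(\varphi[\mathcal{V}]\) follows since \(V \neq Y\) yields some \(y \in Y \setminus V\), and then \(y \notin \varphi(V)\). For the second, suppose \(E \subseteq \bigcup \mathcal{O}(Y, \mathcal{B})\) with \(\varphi[E] \in \mathcal{O}(X, \mathcal{A})\); for any \(B = A \cap Y \in \mathcal{B}\), pick \(V \in E\) with \(A \subseteq \varphi(V) = V^\ast \cup (X \setminus Y)\), and intersect with \(Y\) to obtain \(B \subseteq V^\ast \cap Y = V\), so \(E\) is a \(\mathcal{B}\)-cover. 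Non-triviality of \(E\) as a cover of \(Y\) is automatic because \(Y \notin \bigcup \mathcal{O}(Y, \mathcal{B})\).

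I do not anticipate a real obstacle. The only substantive point is recognizing that enlarging \(V^\ast\) by \(X \setminus Y\) is what makes \(\varphi\) translate \(\mathcal{B}\)-covers of \(Y\) to \(\mathcal{A}\)-covers of all of \(X\); the remaining checks are routine. Note that this plan does not claim the equivalence direction of Corollary~\ref{bestCorollary}, since a general \(\mathcal{A}\)-cover of \(X\) need not restrict nicely enough to \(Y\) to reverse the construction.
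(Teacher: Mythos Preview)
Your proposal is correct and follows essentially the same route as the paper: define \(\varphi(V) = V^\ast \cup (X \setminus Y)\) for a chosen open \(V^\ast\) in \(X\) with \(V^\ast \cap Y = V\), verify the two hypotheses of Corollary~\ref{bestCorollary}, and conclude. Your write-up is in fact slightly more careful than the paper's, since you explicitly check non-triviality of \(\varphi[\mathcal V]\) and of \(E\), which the paper leaves implicit.
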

\begin{proof}
    First, for any open \(V \subseteq Y\), let \(W_V\) be open in \(X\) so that \(W_V \cap Y = V\).
    Then define \(\varphi : \mathscr T_Y \to \mathscr T_X\) by the rule \(\varphi(V) = W_V \cup (X \setminus Y)\).
    If \(\mathscr V \in \mathcal O(Y,\mathcal B)\), we show that \(\varphi[\mathscr V] \in \mathcal O(X,\mathcal A)\).
    Let \(A \in \mathcal A\) and find \(V \in \mathscr V\) so that \(A \cap Y \subseteq V\).
    Then \(A \subseteq \varphi(V)\).

    Now, suppose \(\mathscr E\) is a collection of open subsets of \(Y\) so that \(\varphi[\mathscr E] \in \mathcal O(X,\mathcal A)\).
    We show that \(\mathscr E \in \mathcal O(Y,\mathcal B)\).
    Let \(B \in \mathcal B\) and \(A \in \mathcal A\) be so that \(B = A \cap Y\).
    There is some \(E \in \mathscr E\) so that \(A \subseteq \varphi(E)\) and so we see that \(B = A \cap Y \subseteq \varphi(E) \cap Y = E\).
    So Corollary \ref{bestCorollary} applies.
\end{proof}

Notice that Two has a winning Markov strategy in \(\textsf{G}_1(\mathcal K_{\mathbb R} , \mathcal K_{\mathbb R})\) and since \(\mathbb Q\) is not hemicompact, by \cite[Theorem 3.22]{CHCompactOpen}, Two does not have a winning Markov strategy in \(\textsf{G}_1(\mathcal K_{\mathbb Q} , \mathcal K_{\mathbb Q})\).
Thus, \(\textsf{G}_1(\mathcal K_{\mathbb R} , \mathcal K_{\mathbb R}) \not\leq_{\text{II}} \textsf{G}_1(\mathcal K_{\mathbb Q} , \mathcal K_{\mathbb Q})\) so the requirement that the subspace be closed is necessary.

Similarly, the inequality does not reverse as Two has a winning Markov strategy in \(\textsf{G}_1(\mathcal O_{\mathbb Z} , \mathcal O_{\mathbb Z})\) but does not have a Markov winning strategy in \(\textsf{G}_1(\mathcal O_{\mathbb R} , \mathcal O_{\mathbb R})\), proving \(\textsf{G}_1(\mathcal O_{\mathbb Z} , \mathcal O_{\mathbb Z}) \not\leq_{\text{II}} \textsf{G}_1(\mathcal O_{\mathbb R} , \mathcal O_{\mathbb R})\).

\section{Applications to Finite Powers} \label{section:Inspiration}

Let \(X^{<\omega}\) be the disjoint union of all \(X^n\) for \(n \geq 1\).
Clearly, \(X^{<\omega}\) is a coding space for all finite subsets of \(X\) so one may anticipate a relationship between open covers of \(X^{<\omega}\) and \(\omega\)-covers of \(X\).
Indeed, we revisit those well-known connections.

The following result concerning \(\omega\)-covers and how they interact with finite powers can be seen as the real driving force behind the results of this section and, moreover, the inspiration behind Lemma \ref{lem:CoverTranslation}.
\begin{lemma}[Adapted from Lemmas 3.2 and 3.3 of \cite{JustMillerScheepers}] \label{lem:RectangleRefinement}
    Let \(X\) be a space and \(n \geq 1\).
    Then,
    \begin{enumerate}[label=(\alph*)]
        \item \label{RC}
        if \(\mathscr U\) is an \(\omega\)-cover of \(X\), then \(\{ U^n : U \in \mathscr U\}\) is an \(\omega\)-cover of \(X^n\).
        \item \label{RR}
        if \(\mathscr U\) is an \(\omega\)-cover of \(X^n\),
        \[
            \mathscr V = \{ V \in \mathscr T_X : (\exists U \in \mathscr U)[V^n \subseteq U] \}
        \]
        is an \(\omega\)-cover of \(X\).
    \end{enumerate}
\end{lemma}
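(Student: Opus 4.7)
The plan is to handle the two directions independently, since they are symmetric in spirit but use somewhat different combinatorial tricks.

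For part \ref{RC}, I would take a finite subset $F \subseteq X^n$ and collapse it to a finite subset of $X$ by gathering the coordinate projections. Explicitly, set $F' = \pi_1(F) \cup \cdots \cup \pi_n(F) \in [X]^{<\omega}$. Since $\mathscr U$ is an $\omega$-cover of $X$, there exists $U \in \mathscr U$ with $F' \subseteq U$, whence $F \subseteq (F')^n \subseteq U^n$. Non-triviality of the resulting family transfers from $\mathscr U$: if $U^n = X^n$, then $U = X$, contradicting $\mathscr U \in \Omega_X$.

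For part \ref{RR} — the harder direction — I would start with an arbitrary finite $F \subseteq X$, form the finite set $F^n \subseteq X^n$, and pick $U \in \mathscr U$ with $F^n \subseteq U$. The main obstacle is to manufacture a single open $V \subseteq X$ with $F \subseteq V$ and $V^n \subseteq U$, so that $V$ lies in the collection $\mathscr V$. My plan is to use a box-refinement argument: for each tuple $\vec y = (y_1, \ldots, y_n) \in F^n$, use openness of $U$ and the product topology to choose a basic open rectangle $W_1^{\vec y} \times \cdots \times W_n^{\vec y} \subseteq U$ containing $\vec y$. Then for each $x \in F$, define
\[
    V_x = \bigcap \left\{ W_i^{\vec y} : \vec y \in F^n,\ 1 \leq i \leq n,\ y_i = x \right\},
\]
which is a finite intersection of open sets each containing $x$, hence open and containing $x$. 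Finally set $V = \bigcup_{x \in F} V_x$, so $F \subseteq V$.

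The verification that $V^n \subseteq U$ is the crucial step. Given $(z_1, \ldots, z_n) \in V^n$, for each $i$ pick $x_i \in F$ with $z_i \in V_{x_i}$, and consider the tuple $\vec y = (x_1, \ldots, x_n) \in F^n$. Because $y_i = x_i$, the defining intersection for $V_{x_i}$ includes the factor $W_i^{\vec y}$, so $z_i \in W_i^{\vec y}$; hence $(z_1, \ldots, z_n) \in W_1^{\vec y} \times \cdots \times W_n^{\vec y} \subseteq U$. Thus $V \in \mathscr V$ witnesses the covering of $F$, and $V \neq X$ since otherwise $U \supseteq V^n = X^n$ would violate non-triviality of $\mathscr U$. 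This shows $\mathscr V$ is an $\omega$-cover of $X$ and completes the argument.
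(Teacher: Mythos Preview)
Your proposal is correct and follows essentially the same approach the paper takes. The paper does not prove Lemma~\ref{lem:RectangleRefinement} directly (it is attributed to \cite{JustMillerScheepers}), but the proof it gives for the analogous Lemma~\ref{lem:TubeRefinement} mirrors yours: collapse a finite set of tuples to its coordinate projections for part~\ref{RC}, and for part~\ref{RR} cover $F^n$ by a single $U$, refine each point of $F^n$ by a basic box inside $U$, intersect the relevant box-factors at each $x\in F$, and take the union. The only cosmetic difference is that the paper intersects, for each $x\in F$, \emph{all} box-factors containing $x$, whereas you intersect only those factors $W_i^{\vec y}$ with $y_i=x$; your intersection is smaller but still suffices, and the verification that $V^n\subseteq U$ is identical in structure.
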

\begin{lemma} \label{lem:TubeRefinement}
    Let \(X\) be a space and \(n \geq 1\).
    For any \(A \subseteq X\), define \(A^{\leq n}\) be the disjoint union of \(A, A^2 , \ldots, A^n\).
    Then,
    \begin{enumerate}[label=(\alph*)]
        \item \label{TRC}
        if \(\mathscr U\) is an \(\omega\)-cover of \(X\), then \(\{ U^{\leq n} : U \in \mathscr U\}\) is an \(\omega\)-cover of \(X^{\leq n}\).
        \item \label{TRR}
        if \(\mathscr U\) is an \(\omega\)-cover of \(X^{\leq n}\),
        \[
            \mathscr V = \left\{ V \in \mathscr T_X : (\exists U \in \mathscr U)\left[V^{\leq n} \subseteq U\right] \right\}
        \]
        is an \(\omega\)-cover of \(X\).
    \end{enumerate}
\end{lemma}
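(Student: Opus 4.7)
The plan is to handle parts \ref{TRC} and \ref{TRR} separately by adapting the arguments behind Lemma~\ref{lem:RectangleRefinement}, exploiting the fact that $X^{\leq n}$ is the topological disjoint union $X \sqcup X^2 \sqcup \cdots \sqcup X^n$.

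For the forward direction \ref{TRC}, given a finite $F \subseteq X^{\leq n}$, I would let $F' \subseteq X$ denote the (finite) set of all coordinates appearing in the tuples of $F$. The $\omega$-cover hypothesis furnishes some $U \in \mathscr U$ with $F' \subseteq U$; then any $(y_1, \ldots, y_m) \in F$ with $m \leq n$ has all coordinates in $U$, placing it in $U^m \subseteq U^{\leq n}$, whence $F \subseteq U^{\leq n}$.

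For the harder direction \ref{TRR}, I would first reduce to finite sets in $X$: given finite $E \subseteq X$, the set $E^{\leq n}$ is a finite subset of $X^{\leq n}$, so the $\omega$-cover property produces $U \in \mathscr U$ with $E^{\leq n} \subseteq U$. Because $X^{\leq n}$ carries the disjoint-union topology, $U$ decomposes as $U = U_1 \sqcup \cdots \sqcup U_n$ with each $U_m$ open in $X^m$ and $E^m \subseteq U_m$. The main technical step is then, for each fixed $m$, to produce an open $V_m \subseteq X$ with $E \subseteq V_m$ and $V_m^m \subseteq U_m$; this is the tube-style refinement behind the $X^n$-case of Lemma~\ref{lem:RectangleRefinement}: enumerate $E = \{x_1, \ldots, x_k\}$, pick for each $\bar{\imath} = (i_1, \ldots, i_m) \in \{1, \ldots, k\}^m$ a basic open rectangle $W_1^{\bar{\imath}} \times \cdots \times W_m^{\bar{\imath}} \subseteq U_m$ containing $(x_{i_1}, \ldots, x_{i_m})$, set $V_{m,i} := \bigcap \{W_j^{\bar{\imath}} : i_j = i\}$, and let $V_m := \bigcup_{i=1}^k V_{m,i}$. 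A routine check using the indexing confirms $V_m^m \subseteq U_m$.

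Finally, setting $V := V_1 \cap \cdots \cap V_n$ yields an open subset of $X$ containing $E$ with $V^m \subseteq V_m^m \subseteq U_m$ for every $m \leq n$, so $V^{\leq n} \subseteq U$ and hence $V \in \mathscr V$. The only genuine obstacle is the tube refinement in the main step, but since it already appears in the $X^n$ case, the novelty here is simply its componentwise application along the disjoint union together with the intersection that produces a single $V$ working simultaneously for all $m$.
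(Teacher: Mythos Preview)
Your proposal is correct and follows essentially the same tube-refinement idea as the paper. The only difference is organizational: the paper treats all tuple lengths at once---choosing a basic rectangle for every $\vec y \in F^{\leq n}$, collecting all the coordinate-factors into a single finite family, and forming one open set $W = \bigcup_\ell \bigcap\{V : x_\ell \in V\}$ directly---whereas you first decompose $U$ along the disjoint union, run the $X^m$-case separately to obtain $V_m$ for each $m \leq n$, and then intersect the $V_m$'s at the end; both arrangements yield an open $V$ with $E \subseteq V$ and $V^{\leq n} \subseteq U$ by the same mechanism.
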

\begin{proof}
    Though the proof here is similar to a proof of Lemma \ref{lem:RectangleRefinement}, we provide it in full for the convenience of the reader.

    \ref{TRC}
    Suppose \(\mathscr U\) is an \(\omega\)-cover of \(X\) and let \(\mathscr F\) be any finite subset of \(X^{\leq n}\).
    Notice that \[p(\mathscr F) := \{ x \in X : (\exists \mathbf x \in \mathscr F)(\exists j \in \omega)[\pi_j(\mathbf x) = x] \}\] is a finite subset of \(X\) where \(\pi_j\) is the usual projection onto the \(j^{\text{th}}\) coordinate.
    Then we can find \(U \in \mathscr U\) so that \(p(\mathscr F) \subseteq U\).
    Notice that \(\mathscr F \subseteq U^{\leq n}\).

    \ref{TRR}
    Now suppose \(\mathscr U\) is an \(\omega\)-cover of \(X^{\leq n}\) and let \(F = \{ x_1 , x_2 , \ldots , x_m \} \subseteq X\).
    Certainly, \(F^{\leq n}\) is a finite subset of \(X^{\leq n}\) so there exists \(U \in \mathscr U\) so that \(F^{\leq n} \subseteq U\).
    For any \(\vec y = (y_1 , y_2 , \ldots , y_k) \in F^{\leq n}\), let \(V_1(\vec y) , \ldots , V_k(\vec y)\) be so that
    \[
        (y_1 , y_2 , \ldots , y_k ) \in \prod_{j=1}^k V_j(\vec y) \subseteq U.
    \]
    Observe that \[\mathscr V = \left\{ V_j(\vec y) : \vec y \in F^{\leq n}, 1 \leq j \leq \text{len}(\vec y) \right\}\] is a finite collection of open subsets.
    So, for \(1 \leq \ell \leq m\), define \(W_\ell = \bigcap \{ V \in \mathscr V : x_\ell \in V \}\) and then \(W = \bigcup_{\ell=1}^m W_\ell\).
    Clearly, \(W\) is an open subset of \(X\) and \(F \subseteq W\).

    The only thing that remains to be shown is that \(W^{\leq n} \subseteq U\).
    For \(1 \leq k \leq n\), consider \((y_1,\ldots, y_k) \in W^k\).
    Let \(1 \leq \ell_1 , \ell_2, \ldots , \ell_k \leq m\) be so that \(y_j \in W_{\ell_j}\) for each \(1 \leq j \leq k\).
    We can now note that
    \[
        \vec x = \left( x_{\ell_1} , x_{\ell_2} , \ldots , x_{\ell_k} \right) \in \prod_{j=1}^k V_j(\vec x) \subseteq U.
    \]
    As \(W_{\ell_j} \subseteq V_j(\vec x)\), we see that
    \[
        (y_1,\ldots, y_k) \in \prod_{j=1}^k W_{\ell_j} \subseteq \prod_{j=1}^k V_j(\vec x) \subseteq U.
    \]
    As \(k\) was chosen to be arbitrary, the proof is finished.
\end{proof}

\begin{lemma} \label{lem:FirstFinitePower}
    For any space \(X\), \(n \geq 1\), and \(\square \in \{1,\text{fin}\}\),
    \[
        \textsf{G}_\square(\Omega_X,\Omega_X)
        \equiv \textsf{G}_\square(\Omega_{X^n},\Omega_{X^n})
        \equiv \textsf{G}_\square(\Omega_{X^{\leq n}},\Omega_{X^{\leq n}})
        \equiv \textsf{G}_\square(\Omega_{X^{<\omega}},\Omega_{X^{<\omega}}).
    \]
\end{lemma}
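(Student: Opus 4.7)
The plan is to chain equivalences by climbing from $X$ through $X^n$ and $X^{\leq n}$ up to $X^{<\omega}$, invoking Corollary \ref{bestCorollary} where possible and reserving Corollary \ref{Corollary17} for the final, most delicate step.

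The equivalences $\textsf{G}_\square(\Omega_X, \Omega_X) \equiv \textsf{G}_\square(\Omega_{X^n}, \Omega_{X^n}) \equiv \textsf{G}_\square(\Omega_{X^{\leq n}}, \Omega_{X^{\leq n}})$ fall out directly from the ``if, in addition'' clause of Corollary \ref{bestCorollary}. For the first, I would take $\varphi : \mathscr T_X \to \mathscr T_{X^n}$ with $\varphi(U) = U^n$; Lemma \ref{lem:RectangleRefinement}\ref{RC} and \ref{RR} supply the forward translation and the refinement property respectively, the reverse pullback $\varphi[E] \in \Omega_{X^n} \Rightarrow E \in \Omega_X$ is immediate from the diagonal embedding of finite subsets $F \hookrightarrow F^n$, and $\Omega_{X^n}$ is closed under enlargement. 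The second equivalence is identical with $\varphi(U) = U^{\leq n}$ and Lemma \ref{lem:TubeRefinement} in place of Lemma \ref{lem:RectangleRefinement}.

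For the disjoint union $X^{<\omega}$, one inequality is essentially free: since $X = X^1$ is clopen (hence closed) in $X^{<\omega}$, Lemma \ref{lem:SubspaceMonotone} immediately yields $\textsf{G}_\square(\Omega_{X^{<\omega}}, \Omega_{X^{<\omega}}) \leq_{\text{II}} \textsf{G}_\square(\Omega_X, \Omega_X)$. The obstacle is the reverse direction. The naive static translation $\varphi(V) = \bigsqcup_n V^n$ fails, because the resulting $\varphi$-refinement of an $\omega$-cover $A$ of $X^{<\omega}$ demands a single $U \in A$ containing $V^n$ for every $n$ simultaneously, which is far too strong to yield an $\omega$-cover of $X$.

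To circumvent this, I would apply Corollary \ref{Corollary17} with a \emph{turn-number-dependent} translation. Set
\[
    \Tone_n(B) = \{V \in \mathscr T_X : (\exists U \in B)[V^{\leq n+1} \subseteq U]\},
\]
which lies in $\Omega_X$ by Lemma \ref{lem:TubeRefinement}\ref{TRR} applied to the trace of $B$ on $X^{\leq n+1}$, and let $\Ttwo_n(V, B) \in B$ pick a witness with $V^{\leq n+1} \subseteq \Ttwo_n(V, B)$ whenever $V \in \Tone_n(B)$. Property (Ft1) is immediate. For (Ft2), suppose $\mathcal F_n \subseteq \Tone_n(B_n)$ are finite with $\bigcup_n \mathcal F_n \in \Omega_X$, and let $F \subseteq X^{<\omega}$ be finite with $F \subseteq X^{\leq N}$; writing $p(F) \subseteq X$ for the set of coordinates of elements of $F$, it suffices to find $k \geq N-1$ and $V \in \mathcal F_k$ with $p(F) \subseteq V$, for then $F \subseteq p(F)^{\leq N} \subseteq V^{\leq k+1} \subseteq \Ttwo_k(V, B_k)$. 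The main subtlety is the observation---routine but essential---that any non-trivial $\omega$-cover of an infinite Hausdorff space must cover every finite set \emph{infinitely often}; otherwise, by adjoining to $F$ a point omitted by each of the finitely many covering members, we would force some cover member to equal $X$. Since $\bigcup_{k < N-1} \mathcal F_k$ is a finite union of finite sets, a suitable $V \in \mathcal F_k$ with $k \geq N-1$ must exist, completing the verification of (Ft2).
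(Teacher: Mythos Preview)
Your argument is correct, and for the first two equivalences together with the easy direction of the last (via Lemma~\ref{lem:SubspaceMonotone}) it coincides exactly with the paper's proof.

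For the hard direction \(\textsf{G}_\square(\Omega_X,\Omega_X) \leq_{\text{II}} \textsf{G}_\square(\Omega_{X^{<\omega}},\Omega_{X^{<\omega}})\), however, you take a genuinely different route. The paper fixes a bijection \(\beta:\omega^2\to\omega\) and argues directly: Two splits the innings into columns \(\{\beta(n,m):m\in\omega\}\), and along column \(n\) plays a winning response in the \(\textsf{G}_\square(\Omega_{X^{\leq n}},\Omega_{X^{\leq n}})\) game by passing through the refinement of Lemma~\ref{lem:TubeRefinement}. The paper explicitly remarks that ``something similar to one of our translation theorems should apply'' but declines to identify it. You have in effect supplied that missing identification: your turn-indexed maps \(\Tone_n,\Ttwo_n\) fit the hypotheses of Corollary~\ref{Corollary17} directly, with the key extra ingredient being the (correct and standard) observation that a non-trivial \(\omega\)-cover of an infinite space contains every finite set in infinitely many of its members, so that the finitely many early selections \(\bigcup_{k<N-1}\mathcal F_k\) can be discarded. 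Your approach is cleaner in that it stays entirely within the paper's translation framework and avoids the informal ``describe how Two plays'' language; the paper's approach avoids needing the infinitely-often observation but at the cost of an ad hoc interleaving argument whose uniformity across strategy types is asserted rather than derived.
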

\begin{proof}
    For the equivalence \(\textsf{G}_\square(\Omega_X,\Omega_X) \equiv \textsf{G}_\square(\Omega_{X^n},\Omega_{X^n})\), we use the map \(\varphi : \mathscr T_X \to \mathscr T_{X^n}\) defined by \(\varphi(U) = U^n\).
    By Lemma \ref{lem:RectangleRefinement}, we know that \(\varphi[\mathscr U] \in \Omega_{X^n}\) given \(\mathscr U \in \Omega_X\).
    Moreover, if \(\mathscr E\) is any collection of open subsets of \(X\) so that \(\varphi[\mathscr E]\) is an \(\omega\)-cover of \(X^n\), it is clear that \(\mathscr E\) must be an \(\omega\)-cover of \(X\).
    Just take \(x \in X\) to the tuple of length \(n\) consisting of \(x\) in each coordinate.

    Observe that \(\Omega_{X^n}\) is closed under enlargement and that, given any \(\omega\)-cover \(\mathscr U\) of \(X^n\), by Lemma \ref{lem:RectangleRefinement},
    \[
        \{ V \in \mathscr T_X : (\exists U \in \mathscr U)[V^n \subseteq U] \} \in \Omega_X.
    \]
    Hence, Corollary \ref{bestCorollary} applies.

    The equivalence \(\textsf{G}_\square(\Omega_X,\Omega_X) \equiv \textsf{G}_\square(\Omega_{X^{\leq n}},\Omega_{X^{\leq n}})\) follows in a similar way, except by using Lemma \ref{lem:TubeRefinement}.

    For the equivalence \(\textsf{G}_\square(\Omega_{X},\Omega_{X}) \equiv \textsf{G}_\square(\Omega_{X^{<\omega}},\Omega_{X^{<\omega}})\), we first note that
    \[
        \textsf{G}_\square(\Omega_{X^{<\omega}},\Omega_{X^{<\omega}}) \leq_{\text{II}} \textsf{G}_\square(\Omega_{X},\Omega_{X})
    \]
    by Lemma \ref{lem:SubspaceMonotone}.
    To obtain
    \[
        \textsf{G}_\square(\Omega_{X},\Omega_{X}) \leq_{\text{II}} \textsf{G}_\square(\Omega_{X^{<\omega}},\Omega_{X^{<\omega}}),
    \]
    we will first need to fix a bijection \(\beta : \omega^2 \to \omega\).
    Though the information transfer across the strategy types is uniform and thus, something similar to one of our translation theorems should apply, we will prove this without referring to them explicitly.

    What we will do is describe how Two is to play the game assuming they have a winning play in \(\textsf{G}_\square(\Omega_{X},\Omega_{X})\).
    Since the statement we wish to prove involves a transferal of winning plays by Two, this will prove what we want.
    Notice that, for \(\{ (n,m) : m \in \omega \}\), Two can play with their attention only on \(X^{\leq n}\).
    In particular, for each \(m \in \omega\), in the \(\beta(n,m)^{\text{th}}\) inning of \(\textsf{G}_\square(\Omega_{X^{<\omega}},\Omega_{X^{<\omega}})\), given One's play \(\mathscr U_{\beta(n,m)}\), let Two choose \(V_{n,m} \subseteq X\) and \(U_{n,m} \in \mathscr U_{\beta(n,m)}\) so that \(V_{n,m}^{\leq n} \subseteq U_{n,m}\) in such a way that \(\{V_{n,m} : m \in \omega \}\) is an \(\omega\)-cover of \(X\).
    This is possible by Lemma \ref{lem:TubeRefinement} and since \(\textsf{G}_\square(\Omega_{X},\Omega_{X}) \equiv \textsf{G}_\square(\Omega_{X^{\leq n}},\Omega_{X^{\leq n}})\).
    Now, the \(U_{n,m}\) correspond to a play by Two in the game \(\textsf{G}_\square(\Omega_{X^{<\omega}},\Omega_{X^{<\omega}})\) and we need only check that it is a winning play.
    For any finite subset \(F\) of \(X^{<\omega}\), there is a maximal length \(n\) of any tuple in \(F\).
    Since \(\{ V^{\leq n}_{n,m} : m \in \omega \}\) is an \(\omega\)-cover of \(X^{\leq n}\) by Lemma \ref{lem:TubeRefinement}, we see that there must be some \(m \in \omega\) for which \(F \subseteq V^{\leq n}_{n,m} \subseteq U_{n,m}\).
    This finishes the proof.
\end{proof}

\begin{lemma} \label{lem:LessThanMenger}
    For any space \(X\) and an ideal \(\mathcal A\) of compact sets so that \(X = \bigcup \mathcal A\),
    \[\textsf{G}_{\text{fin}}(\mathcal O(X,\mathcal A), \mathcal O(X,\mathcal A)) \leq_{\text{II}} \textsf{G}_{\text{fin}}(\mathcal O_X , \mathcal O_X).\]
\end{lemma}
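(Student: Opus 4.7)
The plan is to apply Theorem \ref{Theorem16}, where the four classes in its statement are taken (in order) to be $\mathcal O(X, \mathcal A)$, $\mathcal O_X$, $\mathcal O(X, \mathcal A)$, and $\mathcal O_X$. What is needed are translation maps $\Tone : \mathcal O_X \to \mathcal O(X, \mathcal A)$ and $\Ttwo : [\mathscr T_X]^{<\omega} \times \mathcal O_X \to [\mathscr T_X]^{<\omega}$ satisfying \ref{FiniteTransA} and \ref{FiniteTransB}.

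For $\mathcal U \in \mathcal O_X$, I would define
\[
    \Tone(\mathcal U) = \left\{V \in \mathscr T_X : \exists \mathcal F \in [\mathcal U]^{<\omega}, \; V \subseteq \bigcup \mathcal F\right\}.
\]
The central check is that $\Tone(\mathcal U)$ is an $\mathcal A$-cover. For each $A \in \mathcal A$, compactness produces a finite $\mathcal F \subseteq \mathcal U$ with $A \subseteq \bigcup \mathcal F$. Since $A$ is a compact proper subset of the Hausdorff space $X$, the standard separation argument (separating each point of $A$ from a fixed $x \in X \setminus A$ via disjoint opens and invoking compactness of $A$) yields a proper open neighborhood $W$ of $A$. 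Then $V := W \cap \bigcup \mathcal F$ is a proper nonempty open set with $A \subseteq V \subseteq \bigcup \mathcal F$, so $V \in \Tone(\mathcal U)$ witnesses the $\mathcal A$-cover property; non-triviality of $\Tone(\mathcal U)$ is automatic from $\Tone(\mathcal U) \subseteq \mathscr T_X$.

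For $\Ttwo$, for each $V \in \Tone(\mathcal U)$ fix a finite $\mathcal F_V \subseteq \mathcal U$ with $V \subseteq \bigcup \mathcal F_V$, and set $\Ttwo(\mathcal G, \mathcal U) = \bigcup_{V \in \mathcal G} \mathcal F_V$ for each finite $\mathcal G \subseteq \Tone(\mathcal U)$. Condition \ref{FiniteTransA} is immediate. For \ref{FiniteTransB}, suppose $\mathcal G_n \in [\Tone(\mathcal U_n)]^{<\omega}$ with $\bigcup_n \mathcal G_n \in \mathcal O(X, \mathcal A)$; since $X = \bigcup \mathcal A$, this $\mathcal A$-cover is in particular an open cover of $X$. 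For any $x \in X$, pick $V \in \bigcup_n \mathcal G_n$ with $x \in V$; then $V \in \mathcal G_n$ for some $n$, so $\mathcal F_V \subseteq \Ttwo(\mathcal G_n, \mathcal U_n)$ and $x \in V \subseteq \bigcup \mathcal F_V \subseteq \bigcup \Ttwo(\mathcal G_n, \mathcal U_n)$. Thus $\bigcup_n \Ttwo(\mathcal G_n, \mathcal U_n)$ covers $X$, and each of its members is a proper open set (a member of some $\mathcal U_n \subseteq \mathscr T_X$), so it lies in $\mathcal O_X$.

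The main obstacle is ensuring that $\Tone(\mathcal U)$ is a genuinely non-trivial $\mathcal A$-cover. The seemingly natural choice $\{\bigcup \mathcal F : \mathcal F \in [\mathcal U]^{<\omega}\} \setminus \{X\}$ can fail: with $X = \mathbb R$, $A = [0.3, 0.7]$, and $\mathcal U = \{(-\infty, 0.6), (0.4, \infty)\}$, the only finite subcollection of $\mathcal U$ covering $A$ has union equal to $X$, so the naive construction omits every proper open set containing $A$. Broadening $\Tone$ to include proper open subsets of finite unions, combined with the Hausdorff separation argument, circumvents this subtlety.
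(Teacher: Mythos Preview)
Your proof is correct and follows the paper's approach: apply Theorem~\ref{Theorem16} with the four classes $\mathcal O(X,\mathcal A)$, $\mathcal O_X$, $\mathcal O(X,\mathcal A)$, $\mathcal O_X$, translate an open cover to an $\mathcal A$-cover via finite unions, and translate selections back by unpacking those finite unions. The one substantive difference is your definition of $\Tone$. The paper simply takes $\Tone(\mathscr U) = \{\bigcup \mathscr F : \mathscr F \in [\mathscr U]^{<\omega}\}$ and asserts this lies in $\mathcal O(X,\mathcal A)$ ``as $\mathcal A$ is compact,'' without addressing non-triviality. Your $\mathbb R$ example shows this collection can contain $X$ even when $X$ is non-compact, so your broadened definition (all proper open subsets of finite unions) together with the Hausdorff separation argument is genuinely more careful on precisely the technicality you identify. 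The cost is a mild use of the standing Hausdorff hypothesis that the paper's version does not invoke; the gain is that your $\Tone(\mathcal U)$ is unambiguously an element of $\mathcal O(X,\mathcal A)$.
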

\begin{proof}
    We use Theorem \ref{Theorem16}.
    Note that \(\bigcup \mathcal O (X, \mathcal A) = \bigcup \mathcal O_X = \mathscr T_X\).
    Define \(\Tone : \mathcal O_X \to \mathcal O(X,\mathcal A)\) by the rule
    \[
        \Tone(\mathscr U) = \left\{ \bigcup \mathscr F : \mathscr F \in \left[ \mathscr U \right]^{<\omega} \right\}.
    \]
    Observe that \(\Tone(\mathscr U) \in \mathcal O(X,\mathcal A)\) as \(\mathcal A\) is compact.

    Now we define \(\Ttwo : \left[\mathscr T_X\right]^{<\omega} \times \mathcal O_X \to \left[\mathscr T_X\right]^{<\omega}\) in the following way.
    If \(V_1 , \ldots , V_n \in \mathscr T_X\) and \(\mathscr U \in \mathcal O_X\) are so that \(V_k = \bigcup \mathscr F_k\) for \(\mathscr F_k \in \left[ \mathscr U \right]^{<\omega}\), \(1 \leq k \leq n\), choose \(\mathscr F_{k,\mathscr U} \in \left[ \mathscr U \right]^{<\omega}\) so that \(V_k = \bigcup \mathscr F_{k,\mathscr U}\).
    Then we define
    \[
        \Ttwo\left( \left\{ V_1 , V_2 , \ldots , V_n\right\} , \mathscr U \right) = \bigcup_{k=1}^n \mathscr F_{k,\mathscr U}.
    \]
    Otherwise, let \(\Ttwo\left( \left\{ V_1 , V_2 , \ldots , V_n\right\}, \mathcal O_X \right) = \left\{ V_1 , V_2 , \ldots , V_n\right\}\).

    Suppose \(V_1 , \ldots , V_n \in \Tone(\mathscr U)\).
    Then notice that, for \(1 \leq k \leq n\), \(\mathscr F_{k, \mathscr U} \in \left[ \mathscr U \right]^{<\omega}\) and thus
    \[
        \Ttwo\left( \left\{ V_1 , V_2 , \ldots , V_n\right\} , \mathscr U \right) \in \left[\mathscr U\right]^{<\omega}.
    \]

    Now, suppose \(V_{n,1}, \ldots , V_{n,m_n} \in \Tone(\mathscr U_n)\) are so that \(\bigcup_{n\in\omega} \{ V_{n,j} : 1 \leq j \leq m_n\} \in \mathcal O(X, \mathcal A)\).
    The last thing we need to show is that
    \[
        \bigcup_{n\in\omega}\Ttwo\left(\left\{V_{n,1}, \ldots , V_{n,m_n}\right\} , \mathscr U_n \right) \in \mathcal O_X.
    \]
    Suppose \(x \in X\) and notice that \(\mathcal O(X,\mathcal A) \subseteq \mathcal O_X\).
    Then there exists \(n \in \omega\) and \(1 \leq j \leq m_n\) so that
    \[
        x \in V_{n,j} = \bigcup \mathscr F_{n,j,\mathscr U_n}.
    \]
    Hence, there is an \(W \in \mathscr F_{n,j,\mathscr U_n}\) so that \(x \in W\).
    Finally, note that \(W \in \Ttwo\left(\left\{V_{n,1}, \ldots , V_{n,m_n}\right\}, \mathscr U_n\right)\).
\end{proof}
Lemma \ref{lem:LessThanMenger} can be strengthened to single selections when \(\mathcal A\) is the collection of finite subsets of \(X\).
However, we have not found a way to apply any of the Translation Theorems in this particular instance.
\begin{lemma}[Sakai \cite{Sakai1988}] \label{lem:OmegaRothIsRoth}
    If \(X \models \textsf{S}_{1}(\Omega,\Omega)\), then \(X\) is Rothberger.
\end{lemma}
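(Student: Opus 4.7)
The plan is to invoke the classical argument of Sakai \cite{Sakai1988}, which establishes the stronger equivalence that $X\models \textsf{S}_1(\Omega,\Omega)$ iff every finite power $X^n$ is Rothberger; the lemma is the $n=1$ specialization of the forward direction. Given a sequence $(\mathscr{U}_n)_{n\in\omega}$ of open covers of $X$, I want to select $U_n \in \mathscr{U}_n$ so that $\{U_n : n \in \omega\}$ is an open cover.

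First I would reduce to countable open covers. By Lemma \ref{lem:Lindelof}, the hypothesis $\textsf{S}_1(\Omega_X,\Omega_X)$ forces $\text{I} \underset{\text{cnst}}{\not\uparrow} \textsf{G}_1(\Omega_X,\Omega_X)$, which gives $X \models \genfrac(){0pt}{0}{\Omega}{\text{ctbl}(\Omega)}$, i.e.\ the $\omega$-Lindel\"of ($\epsilon$-space) property. In particular $X$ is Lindel\"of, so we may assume each $\mathscr{U}_n = \{U_{n,i} : i \in \omega\}$ is countable.

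The core of the plan is a split-and-enhance combinatorial construction. Partition $\omega = \bigsqcup_{k\geq 1} A_k$ into infinitely many infinite pieces with $A_k = \{a(k,j) : j \in \omega\}$; the $k$-th piece will handle the covering of finite subsets of $X$ of cardinality at most $k$. For each $(k,j)$, form the auxiliary family $\mathscr{V}_{k,j}$ consisting of $k$-fold unions of members of $\mathscr{U}_{a(k,j)}$; this is a cover of every subset of $X$ of size $\leq k$. Using Lemma \ref{lem:TubeRefinement} to lift these to honest $\omega$-covers on $X^{\leq k}$, and invoking Lemma \ref{lem:FirstFinitePower} to conclude that $X^{\leq k} \models \textsf{S}_1(\Omega,\Omega)$, one extracts single selections from each $\mathscr{V}_{k,j}$. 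A diagonal bookkeeping across the pieces (singletons are handled by the $k=1$ block, doubletons by the $k\geq 2$ blocks, and so on) then promotes these to single selections $U_n \in \mathscr{U}_n$ whose collection covers $X$.

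The main obstacle, flagged explicitly in the paper's remark after Lemma \ref{lem:LessThanMenger}, is that a single element of $\mathscr{V}_{k,j}$ is a priori a $k$-fold union and hence encodes several elements of $\mathscr{U}_{a(k,j)}$. Translating a single selection from each $\mathscr{V}_{k,j}$ back into a single selection from each $\mathscr{U}_n$ is the delicate combinatorial step; it cannot be obtained from the Translation Theorems (Corollary \ref{bestCorollary} and its relatives), which is precisely why the authors must appeal to Sakai's original argument rather than derive the lemma from the framework developed earlier in the paper.
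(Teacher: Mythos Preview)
The paper does not actually prove this lemma; it cites Sakai and offers only a one-sentence sketch. That sketch, however, pinpoints exactly the idea your proposal misses: the auxiliary $\omega$-covers are built from \emph{several} of the $\mathscr{U}_n$ at once (via a bijection $\omega^2\to\omega$), so that a single element of the $m$-th $\omega$-cover is a union $\bigcup_{n\in F_m} U_n$ with one $U_n\in\mathscr{U}_n$ for each $n$ in a finite block $F_m$, the blocks partitioning $\omega$. A single selection from each $\omega$-cover then decodes, with no loss, into a single selection from every $\mathscr{U}_n$; this is why the paper stresses that the construction ``requires the entire sequence up front.''

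Your $\mathscr{V}_{k,j}$ is instead the family of $k$-fold unions drawn from a \emph{single} cover $\mathscr{U}_{a(k,j)}$, and this creates two genuine gaps. First, such a family only covers subsets of size $\le k$; the map $V\mapsto V^{\le k}$ sends it to an \emph{open} cover of $X^{\le k}$, not an $\omega$-cover, so Lemma~\ref{lem:TubeRefinement} does not manufacture the $\omega$-covers you need. To select singletons from mere open covers of $X^{\le k}$ you would already need Rothberger there, which is precisely the conclusion under proof---the argument is circular. Second, even granting a legitimate selection $V_{k,j}\in\mathscr{V}_{k,j}$, that element is a union of $k$ members of the \emph{same} $\mathscr{U}_{a(k,j)}$; keeping only one of them discards coverage, so no ``diagonal bookkeeping'' can recover a covering family of single selections. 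The repair is exactly the paper's hint: mix the covers so that each auxiliary element contributes one set from each of several distinct $\mathscr{U}_n$'s.
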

The proof of this relies on a bijection \(\omega^2 \to \omega\) that ensures that, given a sequence \(\mathscr U_n\) of open covers, single selections from a sequence of \(\omega\)-covers consisting of a particular kind of closure under finite unions of the \(\mathscr U_n\) form single selections from the \(\mathscr U_n\).
The primary reason this creates a problem for strategic transferal is because the way the open covers are translated to \(\omega\)-covers requires the entire sequence up front.
Hence, as Two only knows finitely many of One's moves at any stage in the game, they cannot bring this information to bear.

Also, we employ Pawlikowski's strategy strengthening for the Menger and Rothberger games.
\begin{theorem}[Pawlikowski \cite{Pawlikowski}] \label{thm:Pawlikowski}
    For any space \(X\) and \(\square\in\{1,\text{fin}\}\),
    \[
        \text{I} \underset{\text{pre}}{\uparrow} \textsf{G}_{\square}(\mathcal O_X , \mathcal O_X) \iff \text{I} \uparrow \textsf{G}_{\square}(\mathcal O_X , \mathcal O_X).
    \]
\end{theorem}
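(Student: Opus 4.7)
The forward implication is immediate, since a winning predetermined strategy is a fortiori a winning strategy. For the nontrivial direction, I would invoke the equivalence \(\textsf{S}_\square(\mathcal O, \mathcal O) \iff \text{I} \underset{\text{pre}}{\not\uparrow} \textsf{G}_\square(\mathcal O_X, \mathcal O_X)\) recorded in the earlier remark; the goal then becomes showing that if \(X \models \textsf{S}_\square(\mathcal O, \mathcal O)\), then no strategy \(\sigma\) for One can be winning in \(\textsf{G}_\square(\mathcal O_X, \mathcal O_X)\).

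Given such a \(\sigma\), unwind it as a tree \(T\) of legal plays: a level-\(n\) node is an admissible history \(s = (t_0, \ldots, t_{n-1})\) with \(t_k\) a valid response to \(\sigma(t_0, \ldots, t_{k-1})\), and at each such node One would play \(\mathscr U_s := \sigma(s) \in \mathcal O_X\). Since Menger (and a fortiori Rothberger) implies Lindel\"{o}f, one may replace each \(\mathscr U_s\) by a countable subcover without loss of generality, after which \(T\) has only countably many nodes in total. Applying \(\textsf{S}_\square(\mathcal O, \mathcal O)\) to the countable family \((\mathscr U_s)_{s \in T}\) yields selections \(\mathscr F_s\) from each \(\mathscr U_s\) whose union \(\bigcup_{s \in T} \mathscr F_s\) lies in \(\mathcal O_X\).

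The next step is Pawlikowski's extraction of a single branch. The selections just produced are spread across all of \(T\), while a defeating play against \(\sigma\) uses selections from only one branch. I would build the defeating branch level by level: at stage \(n\) maintain a finite family of currently ``live'' partial plays, and apply the selection principle (or a finite iteration of it) only to the covers above these plays, ensuring that the selections made at stage \(n\) cover a prescribed open piece of \(X\). By interleaving this with an enumeration of \(\omega^{<\omega}\), via a bijection \(\omega \to \omega^{<\omega}\) in which each finite sequence is the initial segment of cofinally many enumerated sequences, one produces an infinite branch \(f\) such that the along-branch selections cover \(X\); this branch gives Two's winning play against \(\sigma\).

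The central obstacle is precisely this branch-extraction: translating the tree-wide cover given by \(\textsf{S}_\square\) into a cover realized along a single path. The argument is smoother in the Menger case (\(\square = \text{fin}\)), where the flexibility of finite selections lets one ``aggregate'' along a branch what \(\textsf{S}_{\text{fin}}\) provides across the tree, and more delicate in the Rothberger case (\(\square = 1\))---Pawlikowski's principal contribution---since arranging that single-element selections along one branch still cover \(X\) requires careful combinatorial bookkeeping in how the enumeration of \(\omega^{<\omega}\) interacts with the recursive selection process.
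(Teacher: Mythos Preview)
The paper does not prove this theorem; it is stated with attribution to Pawlikowski and used as a black box in later arguments (e.g., in Lemma~\ref{lem:OmegaMengerLessThanMenger} and the corollaries deriving the \(\omega\)- and \(k\)-cover versions). There is therefore no proof in the paper to compare your proposal against.

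That said, a brief comment on your sketch as a stand-alone outline of Pawlikowski's argument: the overall architecture---countabilize via Lindel\"{o}f, view the strategy as a tree indexed by \(\omega^{<\omega}\), and extract a single defeating branch---is correct, and you rightly identify branch-extraction as the crux. However, the passage ``apply \(\textsf{S}_\square\) to the family \((\mathscr U_s)_{s\in T}\)'' followed by ``build the defeating branch level by level'' elides the real work: a single application of the selection principle to the tree-indexed family gives you nothing usable along a branch, and the actual argument requires a recursive construction in which, at each stage, one carefully chooses both which finite set of nodes to attend to and how to thread the selections so that every point of \(X\) is eventually covered along the branch being built. Your description of ``interleaving with an enumeration of \(\omega^{<\omega}\)'' gestures at this but does not specify the invariant maintained at each stage, which is where the argument lives. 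If you intend this as more than a pointer to the literature, that invariant needs to be stated and verified.
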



\begin{lemma} \label{lem:OmegaMengerLessThanMenger}
    For any space \(X\),
    \[
        \textsf{G}_1(\Omega_X , \Omega_X) \leq_{\text{II}} \textsf{G}_1(\mathcal O_X,\mathcal O_X).
    \]
\end{lemma}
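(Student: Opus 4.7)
The plan is to treat each of the five implications in the definition of \(\leq_{\text{II}}\) separately, since no Translation Theorem from the earlier sections applies directly here (as already flagged in the remarks following Lemma~\ref{lem:OmegaRothIsRoth}).

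First I would dispatch the three implications concerning One's non-winning. For constant strategies, Lemma~\ref{lem:Lindelof} reduces the implication to showing that \(\omega\)-Lindel\"ofness implies Lindel\"ofness: given any \(\mathscr U \in \mathcal O_X\), the collection \(\widetilde{\mathscr U} = \{\bigcup \mathcal F : \mathcal F \in [\mathscr U]^{<\omega}\}\) is an \(\omega\)-cover, so it admits a countable \(\omega\)-subcover whose elements decompose back into a countable open subcover of \(\mathscr U\). For predetermined strategies, the characterization of \(\textsf{S}_\square(\mathcal A, \mathcal B)\) as ``One has no predetermined winning strategy'' turns the required implication into Sakai's Lemma~\ref{lem:OmegaRothIsRoth}. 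For full winning strategies for One, I would chain \(\One \not\uparrow \textsf{G}_1(\Omega_X, \Omega_X) \Rightarrow \One \underset{\text{pre}}{\not\uparrow} \textsf{G}_1(\Omega_X, \Omega_X) \Rightarrow \One \underset{\text{pre}}{\not\uparrow} \textsf{G}_1(\mathcal O_X, \mathcal O_X)\) (by Sakai) and then invoke Pawlikowski's Theorem~\ref{thm:Pawlikowski} to conclude \(\One \not\uparrow \textsf{G}_1(\mathcal O_X, \mathcal O_X)\).

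The real obstacle is transferring Markov and full winning strategies for Two. As the remark after Lemma~\ref{lem:OmegaRothIsRoth} warns, Sakai's construction requires the whole sequence of open covers in advance, so the naive idea of feeding finite-union closures \(\widetilde{\mathscr U}_k\) to Two's \(\omega\)-strategy and then extracting summands cannot be run online. My plan for these cases is to use a bijection \(\beta : \omega \times \omega \to \omega\) to split the \(\mathcal O_X\)-game into countably many interleaved sub-plays, with sub-play \(p\) occupying the innings \(\{\beta(p, q) : q \in \omega\}\). In sub-play \(p\), Two would simulate an \(\omega\)-game with inputs \(\widetilde{\mathscr U}_{\beta(p, 0)}, \widetilde{\mathscr U}_{\beta(p, 1)}, \ldots\), apply Two's given \(\omega\)-strategy to obtain outputs \(V_q^{(p)} \in \widetilde{\mathscr U}_{\beta(p, q)}\), and then play a selected summand of \(V_q^{(p)}\) in inning \(\beta(p, q)\) of the \(\mathcal O_X\)-game.

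The hard part will be designing the summand-selection rule so that the resulting collection of outputs is guaranteed to cover \(X\). The \(\omega\)-cover property of \(\{V_q^{(p)} : q \in \omega\}\) only guarantees, for each \(x \in X\) and each sub-play index \(p\), that some \(q\) yields \(x \in V_q^{(p)}\); however, \(x\) may lie only in a summand other than the one Two picks. Coordinating the selections across sub-plays to catch every point of \(X\), while respecting the Markov restriction in the first case (Two can see only the current cover and turn number), is the technical heart of the proof and where I expect the bulk of the work to lie.
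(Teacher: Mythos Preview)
Your treatment of the three implications concerning One is essentially what the paper does: the constant case via Lemma~\ref{lem:Lindelof} (the paper routes this through Lemma~\ref{lem:LessThanMenger}, but the content is the same), the predetermined case via Sakai's Lemma~\ref{lem:OmegaRothIsRoth}, and the full-strategy case by chaining through Pawlikowski's Theorem~\ref{thm:Pawlikowski}. No issues there.

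The gap is in the two implications for Two. You correctly diagnose that no Translation Theorem applies and that the sub-play/summand-extraction idea runs into a real coordination problem, but you then leave that problem open, calling it ``the technical heart of the proof and where I expect the bulk of the work to lie.'' That is not a proof plan; it is an acknowledgment that you do not yet have one. The difficulty you flag is genuine: for a Markov strategy, knowing only \(\mathscr U_n\) and \(n\) gives you no canonical way to pick a summand of \(V_q^{(p)}\) that will eventually catch every point, and nothing in your outline suggests how to overcome this.

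The paper does not attempt this at all. It simply cites \cite[Theorem~15]{ClontzHolshouser} for the full-strategy equivalence and \cite[Theorem~17]{ClontzHolshouser} for the Markov equivalence, both of which are stated there as biconditionals. So the intended resolution is by reference to prior work, not by an in-place argument. If you want to reproduce those results rather than cite them, you should consult that paper for the actual mechanism; your interleaving scheme as stated does not yet close the gap.
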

\begin{proof}
    The implication
    \(\text{I} \underset{\text{cnst}}{\not\uparrow} \textsf{G}_1(\Omega_X , \Omega_X) \implies \text{I} \underset{\text{cnst}}{\not\uparrow}\textsf{G}_1(\mathcal O_X,\mathcal O_X)\)
    follows from Lemmas \ref{lem:LessThanMenger} and \ref{lem:Lindelof} and \(\text{I} \underset{\text{pre}}{\uparrow} \textsf{G}_1(\mathcal O_X , \mathcal O_X) \implies \text{I} \underset{\text{pre}}{\uparrow} \textsf{G}_1(\Omega_X , \Omega_X)\)
    is the content of Lemma \ref{lem:OmegaRothIsRoth}.

    Now, using Theorem \ref{thm:Pawlikowski},
    \[
        \text{I} \uparrow \textsf{G}_1(\mathcal O_X , \mathcal O_X)
        \implies \text{I} \underset{\text{pre}}{\uparrow} \textsf{G}_1(\mathcal O_X , \mathcal O_X)
        \implies \text{I} \underset{\text{pre}}{\uparrow} \textsf{G}_1(\Omega_X , \Omega_X)
        \implies \text{I} \uparrow \textsf{G}_1(\Omega_X , \Omega_X).
    \]

    To finish the proof, we note that \cite[Theorem 15]{ClontzHolshouser} states that
    \[
    \text{II} \uparrow \textsf{G}_1(\Omega_X,\Omega_X) \iff \text{II} \uparrow \textsf{G}_1(\mathcal O_X , \mathcal O_X)
    \]
    and that \cite[Theorem 17]{ClontzHolshouser} states that
    \[
    \text{II} \underset{\text{mark}}{\uparrow} \textsf{G}_1(\Omega_X,\Omega_X) \iff \text{II} \underset{\text{mark}}{\uparrow} \textsf{G}_1(\mathcal O_X , \mathcal O_X).
    \]
\end{proof}

\begin{theorem}
    For any space \(X\), \(n \geq 1\), and \(\square \in \{1,\text{fin}\}\),
    \[
        \textsf{G}_\square(\Omega_X,\Omega_X)
        \equiv \textsf{G}_\square(\Omega_{X^n},\Omega_{X^n})
        \equiv \textsf{G}_\square(\Omega_{X^{<\omega}},\Omega_{X^{<\omega}})
        \equiv \textsf{G}_\square(\mathcal O_{X^{<\omega}},\mathcal O_{X^{<\omega}}).
    \]
\end{theorem}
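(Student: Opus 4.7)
The first three equivalences are contained in Lemma~\ref{lem:FirstFinitePower}; the substance of the theorem is the new equivalence
\[
\textsf{G}_\square(\Omega_{X^{<\omega}},\Omega_{X^{<\omega}}) \equiv \textsf{G}_\square(\mathcal O_{X^{<\omega}},\mathcal O_{X^{<\omega}}).
\]
My plan is to establish the two $\leq_{\text{II}}$ inequalities separately and combine them with Lemma~\ref{lem:FirstFinitePower}.

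For the direction $\textsf{G}_\square(\Omega_{X^{<\omega}},\Omega_{X^{<\omega}}) \leq_{\text{II}} \textsf{G}_\square(\mathcal O_{X^{<\omega}},\mathcal O_{X^{<\omega}})$, I would specialize the tools already in place to $Y = X^{<\omega}$: when $\square = \text{fin}$, apply Lemma~\ref{lem:LessThanMenger} with $\mathcal A$ the ideal of finite subsets of $Y$ (which is an ideal of compact sets covering $Y$), so that $\mathcal O(Y,\mathcal A) = \Omega_Y$; when $\square = 1$, apply Lemma~\ref{lem:OmegaMengerLessThanMenger} directly.

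For the reverse direction, the plan is to invoke Corollary~\ref{bestCorollary} with $\mathcal A = \mathcal C = \mathcal O_{X^{<\omega}}$, $\mathcal B = \mathcal D = \Omega_X$, and the translation map $\varphi : \mathscr T_X \to \mathscr T_{X^{<\omega}}$ defined by $\varphi(U) = \bigsqcup_{n \geq 1} U^n$ (each $\varphi(U)$ is a proper non-empty open subset of $X^{<\omega}$ because $U$ is a proper non-empty open subset of $X$). This will yield $\textsf{G}_\square(\mathcal O_{X^{<\omega}},\mathcal O_{X^{<\omega}}) \leq_{\text{II}} \textsf{G}_\square(\Omega_X,\Omega_X)$, and Lemma~\ref{lem:FirstFinitePower} will close the loop. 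The two hypotheses of Corollary~\ref{bestCorollary} are routine: first, if $\mathscr U \in \Omega_X$ and $(x_1,\ldots,x_n) \in X^n \subseteq X^{<\omega}$, then any $U \in \mathscr U$ containing $\{x_1,\ldots,x_n\}$ satisfies $(x_1,\ldots,x_n) \in U^n \subseteq \varphi(U)$, giving $\varphi[\mathscr U] \in \mathcal O_{X^{<\omega}}$; second, if $E \subseteq \mathscr T_X$ and $\varphi[E] \in \mathcal O_{X^{<\omega}}$, then for any finite $F = \{x_1,\ldots,x_n\} \subseteq X$, viewing $(x_1,\ldots,x_n) \in X^n \subseteq X^{<\omega}$ and picking $V \in E$ with $(x_1,\ldots,x_n) \in \varphi(V)$ forces the tuple into $V^n$, whence $F \subseteq V$, so $E \in \Omega_X$.

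There is no genuine obstacle here. The construction continues the coding strategy behind Lemmas~\ref{lem:RectangleRefinement}, \ref{lem:TubeRefinement}, and \ref{lem:FirstFinitePower}, and the verifications are immediate because $\varphi$ converts ``finite subsets of $X$'' (the essence of $\omega$-covers of $X$) directly into ``single points of $X^{<\omega}$'' (the essence of ordinary open covers of $X^{<\omega}$) in a way that is reversible on both sides.
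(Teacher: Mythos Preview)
Your proposal is correct and follows essentially the same route as the paper: both obtain $\textsf{G}_\square(\Omega_{X^{<\omega}},\Omega_{X^{<\omega}}) \leq_{\text{II}} \textsf{G}_\square(\mathcal O_{X^{<\omega}},\mathcal O_{X^{<\omega}})$ from Lemmas~\ref{lem:LessThanMenger} and~\ref{lem:OmegaMengerLessThanMenger}, and both close the loop by applying Corollary~\ref{bestCorollary} with the map $\varphi(U) = \bigsqcup_{n\geq 1} U^n$ to get $\textsf{G}_\square(\mathcal O_{X^{<\omega}},\mathcal O_{X^{<\omega}}) \leq_{\text{II}} \textsf{G}_\square(\Omega_X,\Omega_X)$, then invoke Lemma~\ref{lem:FirstFinitePower}. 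The verifications you sketch for the two hypotheses of Corollary~\ref{bestCorollary} match the paper's verbatim.
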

\begin{proof}
    Lemmas \ref{lem:LessThanMenger} and \ref{lem:OmegaMengerLessThanMenger} obtain
    \[
        \textsf{G}_\square(\Omega_{X^{<\omega}},\Omega_{X^{<\omega}}) \leq_{\text{II}} \textsf{G}_\square(\mathcal O_{X^{<\omega}},\mathcal O_{X^{<\omega}}).
    \]
    By Lemma \ref{lem:FirstFinitePower}, to finish the proof, we need only show that
    \[
        \textsf{G}_\square(\mathcal O_{X^{<\omega}},\mathcal O_{X^{<\omega}}) \leq_{\text{II}} \textsf{G}_\square(\Omega_X,\Omega_X).
    \]
    Define \(\varphi : \mathscr T_X \to \mathscr T_{X^{<\omega}}\) by letting \(\varphi(U)\) be the disjoint union of all the \(U^n\), \(n \geq 1\).
    If \(\mathscr U\) is an \(\omega\)-cover of \(X\), observe that \(\varphi[\mathscr U]\) is an open cover of \(X^{<\omega}\).

    Now, suppose \(\mathscr E\) is a collection of open subsets of \(X\) so that \(\varphi[\mathscr E]\) is an open cover of \(X^{<\omega}\).
    To see that \(\mathscr E\) must indeed be an \(\omega\)-cover of \(X\), let \(\{x_1,x_2,\ldots,x_m\} \subseteq X\).
    Then notice that \((x_1,x_2,\ldots , x_m) \in X^m\) so there must be some \(E \in \mathscr E\) so that \((x_1,x_2,\ldots , x_m) \in \varphi(E)\).
    By our definition of \(\varphi\), this means that \(\{x_1,x_2,\ldots,x_n\} \subseteq E\), so we apply Corollary \ref{bestCorollary} to obtain what we claimed.
\end{proof}

\begin{corollary}[Gerlits \& Nagy \cite{GerlitsNagy}] \label{thm:GerlitsNagy}
  Let \(X\) be a space.
  The following are equivalent:
  \begin{enumerate}[label=(\alph*)]
    \item
    \(X\) is an \(\epsilon\)-space,
    \item
    every finite power of \(X\) is an \(\epsilon\)-space,
    \item
    every finite power of \(X\) is Lindel{\"{o}}f, and
    \item
    \(X^{<\omega}\) is Lindel{\"{o}}f.
  \end{enumerate}
\end{corollary}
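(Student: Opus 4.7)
The plan is to convert each of (a)--(d) into the statement that Player I has no constant winning strategy in an appropriate single-selection game via Lemma \ref{lem:Lindelof}, then read off three of the equivalences directly from the theorem just proved, and handle the remaining condition (c) using Lemma \ref{lem:OmegaMengerLessThanMenger} together with a routine disjoint-union argument. Using that the $\epsilon$-space property is $\genfrac(){0pt}{0}{\Omega}{\text{ctbl}(\Omega)}$ and Lindelöfness is $\genfrac(){0pt}{0}{\mathcal O}{\text{ctbl}(\mathcal O)}$, Lemma \ref{lem:Lindelof} rewrites (a) as $\text{I} \underset{\text{cnst}}{\not\uparrow} \textsf{G}_1(\Omega_X, \Omega_X)$, (b) as the assertion that $\text{I} \underset{\text{cnst}}{\not\uparrow} \textsf{G}_1(\Omega_{X^n}, \Omega_{X^n})$ for every $n \geq 1$, (c) as the corresponding assertion with $\mathcal O_{X^n}$ in place of $\Omega_{X^n}$, and (d) as $\text{I} \underset{\text{cnst}}{\not\uparrow} \textsf{G}_1(\mathcal O_{X^{<\omega}}, \mathcal O_{X^{<\omega}})$.

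The theorem just above (specialized to $\square = 1$) gives $\textsf{G}_1(\Omega_X, \Omega_X) \equiv \textsf{G}_1(\Omega_{X^n}, \Omega_{X^n}) \equiv \textsf{G}_1(\mathcal O_{X^{<\omega}}, \mathcal O_{X^{<\omega}})$ for every $n \geq 1$, and the defining clauses of $\equiv$ explicitly include transfer of $\text{I} \underset{\text{cnst}}{\not\uparrow}$; this immediately yields (a) $\iff$ (b) $\iff$ (d).

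To incorporate (c), I would argue in two steps. For (b) $\implies$ (c), I apply Lemma \ref{lem:OmegaMengerLessThanMenger} with $X^n$ in place of $X$ to obtain $\textsf{G}_1(\Omega_{X^n}, \Omega_{X^n}) \leq_{\text{II}} \textsf{G}_1(\mathcal O_{X^n}, \mathcal O_{X^n})$; since $\leq_{\text{II}}$ preserves the constant-strategy condition, the assertion follows. For (c) $\implies$ (d), I use the standard fact that a countable disjoint union of Lindelöf spaces is Lindelöf: given a non-trivial open cover $\mathscr U$ of $X^{<\omega}$, each component $X^n$ is open in $X^{<\omega}$ and Lindelöf by (c), so admits a countable subcover $\mathscr U_n \subseteq \mathscr U$; the union $\bigcup_{n \geq 1} \mathscr U_n$ is then a countable subcover of $X^{<\omega}$, with non-triviality preserved since $X^{<\omega} \notin \mathscr U$.

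I anticipate no serious obstacle: the structural work has already been absorbed by the theorem just above and by Lemma \ref{lem:OmegaMengerLessThanMenger}, so the proof reduces to careful bookkeeping of definitions. The only minor care needed is distinguishing the $\Omega$- and $\mathcal O$-versions of Lemma \ref{lem:Lindelof} when translating (a)--(b) versus (c)--(d).
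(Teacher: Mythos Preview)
Your proposal is correct and matches the paper's intended derivation: the corollary is stated without proof as an immediate consequence of the preceding theorem together with Lemma~\ref{lem:Lindelof}, and you have simply made explicit the routine handling of condition (c) that the paper leaves to the reader. A marginally shorter route for (c) would be to observe that each $X^n$ is a closed (indeed clopen) subspace of $X^{<\omega}$, so (d) $\implies$ (c) directly since closed subspaces of Lindel\"of spaces are Lindel\"of; but your use of Lemma~\ref{lem:OmegaMengerLessThanMenger} is equally valid.
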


\begin{corollary}[Just, Miller, and Scheepers {\cite[Theorem 3.9]{JustMillerScheepers}}] \label{thm:JustMillerScheepers}
  Let \(X\) be a space.
  The following are equivalent:
  \begin{enumerate}[label=(\alph*)]
    \item
    \(X \models \textsf{S}_{\text{fin}}(\Omega, \Omega)\),
    \item
    \((\forall n \in\omega)[X^{n+1} \models \textsf{S}_{\text{fin}}(\Omega, \Omega)]\),
    \item
    \((\forall n \in \omega)[X^{n+1} \models \textsf{S}_{\text{fin}}(\mathcal O, \mathcal O)]\), and
    \item
    \(X^{<\omega}\) is Menger.
  \end{enumerate}
\end{corollary}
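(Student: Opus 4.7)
The plan is to derive this corollary directly from the theorem immediately preceding it by specializing to $\square = \text{fin}$ and invoking the characterization of selection principles via predetermined strategies. Recall the remark that $X \models \textsf{S}_\square(\mathcal A, \mathcal B)$ if and only if $\text{I} \underset{\text{pre}}{\not\uparrow} \textsf{G}_\square(\mathcal A, \mathcal B)$. Since equivalence of selection games explicitly includes equivalence of the condition $\text{I} \underset{\text{pre}}{\not\uparrow}$, the preceding theorem at $\square = \text{fin}$ immediately yields the equivalences
\[
    (a) \iff (b) \iff (d),
\]
where $(d)$ comes from reading $X^{<\omega}$ being Menger as $X^{<\omega} \models \textsf{S}_{\text{fin}}(\mathcal O, \mathcal O)$, i.e., $\text{I} \underset{\text{pre}}{\not\uparrow} \textsf{G}_{\text{fin}}(\mathcal O_{X^{<\omega}}, \mathcal O_{X^{<\omega}})$.

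It remains to incorporate $(c)$, which I would do by establishing $(b) \implies (c) \implies (d)$. For $(b) \implies (c)$, I would apply Lemma \ref{lem:LessThanMenger} with $X^{n+1}$ playing the role of $X$ and with $\mathcal A$ taken to be the ideal of finite subsets of $X^{n+1}$ (which are compact). This yields $\textsf{G}_{\text{fin}}(\Omega_{X^{n+1}}, \Omega_{X^{n+1}}) \leq_{\text{II}} \textsf{G}_{\text{fin}}(\mathcal O_{X^{n+1}}, \mathcal O_{X^{n+1}})$, so the failure of a predetermined strategy for One transfers, producing Menger-ness of $X^{n+1}$.

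For $(c) \implies (d)$, I would use the standard fact that the Menger property is preserved under countable topological disjoint unions, applied to $X^{<\omega} = \bigsqcup_{n \geq 1} X^n$. Given a sequence of open covers $\langle \mathscr U_k : k \in \omega \rangle$ of $X^{<\omega}$, fix a partition of $\omega$ into countably many infinite pieces $\{I_n : n \geq 1\}$; restricting each $\mathscr U_k$ for $k \in I_n$ to $X^n$ yields a sequence of open covers of the Menger space $X^n$, from which finite sub-selections combine across $n$ to cover $X^{<\omega}$.

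The main obstacle, if any, is merely bookkeeping: the substantive game-theoretic work has already been absorbed into the preceding theorem, so the proof reduces to identifying the correct instantiations of earlier lemmas and the routine countable-disjoint-union argument for Menger-ness. No new techniques should be required.
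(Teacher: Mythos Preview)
Your proposal is correct and matches the paper's (implicit) approach: the paper states this result as an unproved corollary of the preceding theorem, and you have correctly spelled out the details, namely that the theorem with \(\square = \text{fin}\) together with the predetermined-strategy characterization of selection principles yields \((a)\iff(b)\iff(d)\), while \((c)\) is woven in via Lemma~\ref{lem:LessThanMenger} and the standard preservation of Menger under countable disjoint unions. One small remark: for \((d)\Rightarrow(c)\) you could alternatively invoke Lemma~\ref{lem:SubspaceMonotone} (each \(X^{n+1}\) is closed in \(X^{<\omega}\)), which stays entirely within the paper's toolkit and avoids the external disjoint-union argument, but your route is equally valid.
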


\begin{corollary}[Sakai \cite{Sakai1988}] \label{thm:Sakai}
  Let \(X\) be a space.
  The following are equivalent:
  \begin{enumerate}[label=(\alph*)]
    \item
    \(X \models \textsf{S}_1(\Omega, \Omega)\),
    \item
    \((\forall n \in\omega)[X^{n+1} \models \textsf{S}_1(\Omega, \Omega)]\),
    \item
    \((\forall n \in \omega)[X^{n+1} \models \textsf{S}_1(\mathcal O, \mathcal O)]\), and
    \item
    \(X^{<\omega}\) is Rothberger.
  \end{enumerate}
\end{corollary}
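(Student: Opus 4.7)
The plan is to derive all four equivalences from the immediately preceding theorem by converting each game equivalence into an equivalence of selection principles. The bridge is the remark (following the definition of \(\textsf{S}_\square\)) stating that \(X \models \textsf{S}_\square(\mathcal{A},\mathcal{B})\) if and only if \(\text{I}\underset{\text{pre}}{\not\uparrow}\textsf{G}_\square(\mathcal{A},\mathcal{B})\). Applying the preceding theorem with \(\square = 1\) (and taking the instance \(n \mapsto n+1\)) gives
\[
\textsf{G}_1(\Omega_X,\Omega_X)
\equiv \textsf{G}_1(\Omega_{X^{n+1}},\Omega_{X^{n+1}})
\equiv \textsf{G}_1(\mathcal{O}_{X^{<\omega}},\mathcal{O}_{X^{<\omega}})
\]
for each \(n \in \omega\). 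Reading off the predetermined-strategy clause of \(\equiv\) and applying the remark yields (a) \(\iff\) (b) \(\iff\) (d) at once.

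It remains to place (c) in the chain. For (d) \(\Rightarrow\) (c), I would invoke Lemma \ref{lem:SubspaceMonotone} with \(\mathcal{A}=\emptyset\) and the closed subspace \(X^{n+1} \subseteq X^{<\omega}\): this supplies \(\textsf{G}_1(\mathcal{O}_{X^{<\omega}},\mathcal{O}_{X^{<\omega}}) \leq_{\text{II}} \textsf{G}_1(\mathcal{O}_{X^{n+1}},\mathcal{O}_{X^{n+1}})\), so \(X^{<\omega}\) being Rothberger forces each \(X^{n+1}\) to be Rothberger. For (c) \(\Rightarrow\) (d), I would use a standard diagonalization. Fix a partition \(\omega = \bigsqcup_{n \geq 1} A_n\) into infinite pieces; given any sequence \(\langle \mathscr{U}_k : k \in \omega \rangle\) of open covers of \(X^{<\omega}\), the restrictions \(\{U \cap X^n : U \in \mathscr{U}_k\}\) for \(k \in A_n\) form a sequence of open covers of the Rothberger space \(X^n\), so one can select \(U_k \in \mathscr{U}_k\) (\(k \in A_n\)) whose restrictions cover \(X^n\); taking the union over all \(n\) yields a single selection covering \(X^{<\omega}\).

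The preceding theorem carries essentially all of the content, so I do not anticipate any real obstacle. The two remaining arguments rely only on closed-subspace monotonicity and on the trivial fact that a countable disjoint union of Rothberger spaces is Rothberger.
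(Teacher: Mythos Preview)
Your proposal is correct and matches the paper's approach: the paper presents this as an unproven corollary of the preceding theorem, and you have correctly extracted (a) $\iff$ (b) $\iff$ (d) from the predetermined-strategy clause of that theorem's game equivalences. Your handling of (c) via Lemma~\ref{lem:SubspaceMonotone} for (d) $\Rightarrow$ (c) and the standard partition-and-diagonalize argument for (c) $\Rightarrow$ (d) fills in the only detail the paper leaves implicit, and both steps are sound (though taking $\mathcal A$ to be the singletons rather than $\emptyset$ in Lemma~\ref{lem:SubspaceMonotone} would be slightly more natural, since the definition of $\mathcal O(X,\mathcal A)$ already presupposes an open cover).
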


In the sequel, we will extend Theorem \ref{thm:Pawlikowski} and Corollaries \ref{thm:GerlitsNagy}, \ref{thm:JustMillerScheepers}, and \ref{thm:Sakai} as much as possible.

\section{Applications to Ideals of Compact Sets}

\begin{definition}
    For a space \(X\), let \(\mathbb K(X)\) be the collection of all non-empty compact subsets of \(X\) endowed with the Vietoris topology; that is, the topology generated by sets of the form \(\{ K \in \mathbb K(X) : K \subseteq U \}\) and \(\{ K \in \mathbb K(X) : K \cap U \neq \emptyset\}\) for \(U \subseteq X\) open.
    For \(U_1 , \ldots , U_n\) open in \(X\), define
    \[
        [U_1, \ldots, U_n] = \left\{ K \in \mathbb K(X) : K \subseteq \bigcup_{j=1}^n U_j \text{ and } (\forall j)\left[ K \cap U_j \neq \emptyset \right] \right\}.
    \]
    These sets form a basis for the topology on \(\mathbb K(X)\).
\end{definition}
For a detailed treatment of the Vietoris topology, see \cite{MichaelVietoris}.

\begin{definition}
    We say that an ideal of compact subsets \(\mathcal A^X\) of \(X\) is \emph{closed under \(\mathcal A\)-unions} if there is an ideal \(\mathcal A^{\mathbb K(X)}\) of compact subsets of \(\mathbb A := \mathcal A^X\) as a subspace of \(\mathbb K(X)\) so that
    \begin{itemize}
        \item
        \(K_0 \in \mathcal A^X \implies \{ K \in \mathbb A : K \subseteq K_0\} \in \mathcal A^{\mathbb K(X)}\) and
        \item
        \(\mathbf K \in \mathcal A^{\mathbb K(X)} \implies \bigcup \mathbf K \in \mathcal A^X\).
    \end{itemize}
\end{definition}
\begin{lemma} \label{lem:UnionsClosureIdeals}
    Let \(X\) be a space.
    \begin{itemize}
        \item
        If \(\mathcal A^X\) is the ideal of finite subsets of \(X\), then \(\mathcal A^X\) is closed under \(\mathcal A\)-unions.
        \item
        If \(\mathcal A^X\) is the ideal of compact subsets of \(X\), then \(\mathcal A^X\) is closed under \(\mathcal A\)-unions.
    \end{itemize}
\end{lemma}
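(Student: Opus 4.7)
The plan is to supply, in each case, a natural candidate for $\mathcal A^{\mathbb K(X)}$ and then verify the two defining bullet points of the ``closed under $\mathcal A$-unions'' property. In the finite case, I take $\mathcal A^{\mathbb K(X)}$ to be the ideal of finite subsets of $\mathbb A$; in the compact case, I take it to be the ideal of compact subsets of $\mathbb A = \mathbb K(X)$. Both are evidently ideals in their respective ambient spaces, so the only real content lies in checking the two translation conditions.

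For the finite case the verification is purely combinatorial. If $K_0 \subseteq X$ is finite, then $\{K \in \mathbb A : K \subseteq K_0\}$ is just the collection of non-empty subsets of $K_0$, a finite subset of $\mathbb A$; and if $\mathbf K$ is a finite family of finite subsets of $X$, then $\bigcup \mathbf K$ is finite. No topology is used.

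For the compact case I will invoke two classical facts about the Vietoris topology plus one short direct argument. First, when $K_0 \in \mathbb K(X)$, the subspace $\{K \in \mathbb K(X) : K \subseteq K_0\}$ is exactly $\mathbb K(K_0)$, which is known to be compact (see \cite{MichaelVietoris}); this handles the first bullet. Second, I need that if $\mathbf K$ is a compact subset of $\mathbb K(X)$, then $\bigcup \mathbf K$ is compact in $X$. I prove this directly: given an open cover $\mathscr U$ of $\bigcup \mathbf K$, for each $K \in \mathbf K$ extract a finite $\mathscr F_K \subseteq \mathscr U$ covering $K$, and observe that $\{L \in \mathbb K(X) : L \subseteq \bigcup \mathscr F_K\}$ is a basic Vietoris-open neighborhood of $K$. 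The resulting open cover of $\mathbf K$ in $\mathbb K(X)$ admits a finite subcover indexed by some $K_1, \ldots, K_n$, and $\mathscr F_{K_1} \cup \cdots \cup \mathscr F_{K_n}$ is then a finite subfamily of $\mathscr U$ covering $\bigcup \mathbf K$.

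I do not anticipate any serious obstacle: the lemma reduces to the two classical facts about the Vietoris hyperspace together with elementary set-theoretic bookkeeping. The mildest subtlety is remembering that $\mathbb K(X)$ is defined with non-empty compact sets, so the ``power set'' appearing in the finite case must have the empty set omitted, but this is harmless for both conditions.
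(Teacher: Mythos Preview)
Your proposal is correct and follows essentially the same approach as the paper: the same choice of $\mathcal A^{\mathbb K(X)}$ in each case (finite subsets of $\mathbb A$, respectively compact subsets of $\mathbb A$), with the same verifications. The paper is terser, merely pointing to \cite{MichaelVietoris} for both facts in the compact case, whereas you supply an explicit finite-subcover argument for the second bullet; that extra detail is sound and is the standard proof of the result the paper is citing.
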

\begin{proof}
    For the finite sets, notice that, for any finite set \(F \subseteq X\), \(\mathbf F := \{ K \in \mathbb K(X) : K \subseteq F\}\) consists of finite sets.
    Moreover, \(\mathbf F\) is finite, thus compact.
    Similarly, if \(\mathbf K\) is a finite set consisting of finite subsets of \(X\), then \(\bigcup \mathbf K\) is a finite subset of \(X\).

    The second item follows from results of \cite{MichaelVietoris}.
\end{proof}
Colloquially, one may say that compact sets are closed under compact unions when \(\mathcal A^X\) is the ideal of compact subsets, for example.

The following result finds inspiration from Lemma \ref{lem:RectangleRefinement} and is the foundation for most of what follows.
\begin{lemma} \label{lem:CoverTranslation}
    Let \(X\) be a space and \(\mathcal A^X\) be an ideal of compact subsets that is closed under \(\mathcal A\)-unions.
    \begin{enumerate}[label=(\alph*)]
        \item \label{CoverTranslationBelow}
        If \(\mathscr U \in \mathcal O(X,\mathcal A^X)\), then \(\{[U] : U \in \mathscr U\}\in \mathcal O(\mathbb A, \mathcal A^{\mathbb K(X)})\).
        \item \label{CoverTranslationRefines}
        If \(\mathscr U \in \mathcal O(\mathbb A, \mathcal A^{\mathbb K(X)})\), then
        \[
            \mathscr V = \{ V \in \mathscr T_X : \exists U \in \mathscr U ([V] \subseteq U) \} \in \mathcal O(X, \mathcal A^X).
        \]
    \end{enumerate}
\end{lemma}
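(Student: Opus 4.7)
My plan is as follows. Part (a) is direct: given \(\mathbf K \in \mathcal A^{\mathbb K(X)}\), the second clause of the closure-under-\(\mathcal A\)-unions hypothesis gives \(\bigcup \mathbf K \in \mathcal A^X\), so some \(U \in \mathscr U\) satisfies \(\bigcup \mathbf K \subseteq U\); then every \(K \in \mathbf K\) satisfies \(K \subseteq U\), whence \(\mathbf K \subseteq [U]\). Non-triviality of \(\{[U] : U \in \mathscr U\}\) as a cover of \(\mathbb A\) is inherited from \(\mathscr U\): any \(x \in X \setminus U\) gives \(\{x\} \in \mathbb A \setminus [U]\).

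For part (b), take \(A \in \mathcal A^X\) and form \(\mathbf A := \{K \in \mathbb A : K \subseteq A\}\), which lies in \(\mathcal A^{\mathbb K(X)}\) by the first clause of the closure hypothesis. The cover \(\mathscr U\) provides \(U \in \mathscr U\) with \(\mathbf A \subseteq U\), reducing the task to finding an open \(V \supseteq A\) with \([V] \subseteq U\). For each \(a \in A\), the singleton \(\{a\} \in \mathbf A\) sits in a basic Vietoris-open \([V_1^a, \ldots, V_{n_a}^a] \cap \mathbb A \subseteq U\); the condition \(\{a\} \cap V_j^a \neq \emptyset\) forces \(a \in V_j^a\) for every \(j\), so \(W_a := \bigcap_j V_j^a\) is an open neighborhood of \(a\) with the property that any non-empty compact \(K \in \mathbb A\) contained in \(W_a\) lies in \([V_1^a, \ldots, V_{n_a}^a] \subseteq U\). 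Compactness of \(A\) then yields finitely many \(W_{a_1}, \ldots, W_{a_N}\) covering \(A\).

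The heart of the proof handles compact \(K \in \mathbb A\) with \(K \subseteq V := \bigcup_\ell W_{a_\ell}\) that straddle several \(W_{a_\ell}\)'s. I would mirror the tube-style construction in the proof of part (b) of Lemma \ref{lem:TubeRefinement}: use compactness of \(\mathbf A\) in \(\mathbb A\) to pick finitely many basic opens \([V_1^i, \ldots, V_{n_i}^i] \cap \mathbb A \subseteq U\), \(1 \leq i \leq m\), covering \(\mathbf A\), then redefine \(W_{a_\ell} := \bigcap\{V_j^i : a_\ell \in V_j^i\}\). For any compact \(K \subseteq V\), the finite set \(F_K := \{a_\ell : K \cap W_{a_\ell} \neq \emptyset\}\) lies in \(\mathbf A\), hence in some basic open \([V_1^{i^*}, \ldots, V_{n_{i^*}}^{i^*}]\), and the intersection definition of the refined \(W_{a_\ell}\)'s forces \(K\) to belong to that same basic open (and hence to \(U\)).

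The main obstacle is the combinatorial verification of this last claim: one must show \(K \subseteq \bigcup_j V_j^{i^*}\) and \(K \cap V_j^{i^*} \neq \emptyset\) for every \(j\). For the first, any \(x \in K\) lies in some \(W_{a_\ell}\) with \(a_\ell \in F_K \subseteq \bigcup_j V_j^{i^*}\), so picking \(j\) with \(a_\ell \in V_j^{i^*}\) gives \(x \in W_{a_\ell} \subseteq V_j^{i^*}\). For the second, each \(V_j^{i^*}\) meets \(F_K\) (by the defining property of the basic open containing \(F_K\)), so some \(a_\ell \in F_K \cap V_j^{i^*}\) gives \(W_{a_\ell} \subseteq V_j^{i^*}\) with \(K \cap W_{a_\ell} \neq \emptyset\). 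This bookkeeping is the direct analogue of the \(W_\ell = \bigcap\{V \in \mathscr V : x_\ell \in V\}\) construction in the proof of Lemma \ref{lem:TubeRefinement}, adapted from the finite-product structure of \(X^{\leq n}\) to the Vietoris-basic-open structure of \(\mathbb K(X)\).
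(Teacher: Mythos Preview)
Your outline for part (a) and the overall architecture for part (b) match the paper's proof, and your combinatorial verification at the end is exactly the argument the paper gives. There is, however, one genuine gap in your execution of (b).

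You first use compactness of \(A\) in \(X\) to pick finitely many points \(a_1,\dots,a_N\) whose original neighborhoods \(W_{a_\ell}\) cover \(A\). You then use compactness of \(\mathbf A\) in \(\mathbb A\) to obtain a \emph{new} finite family of basic opens \([V_1^i,\dots,V_{n_i}^i]\), \(1\leq i\leq m\), and you \emph{redefine} \(W_{a_\ell}=\bigcap\{V_j^i : a_\ell\in V_j^i\}\) using this new family while keeping the \emph{same} points \(a_1,\dots,a_N\). After this redefinition there is no reason that \(A\subseteq V:=\bigcup_{\ell}W_{a_\ell}\): the original covering property of the \(a_\ell\)'s came from the phase-one neighborhoods, and the phase-two neighborhoods can be strictly smaller and unrelated to that cover. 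So you have verified \([V]\subseteq U\) for a \(V\) that need not contain \(A\), and hence you have not produced an element of \(\mathscr V\) covering \(A\).

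The paper sidesteps this by never reducing to finitely many base points in the definition of \(V\): it sets \(N_x=\bigcap\{W_{j,k}:x\in W_{j,k}\}\) for \emph{every} \(x\in K_0\) and takes \(V=\bigcup_{x\in K_0}N_x\), which automatically contains \(K_0\). The reduction to finitely many points happens only \emph{after} a compact \(K\subseteq V\) is fixed: one uses compactness of \(K\) to find \(x_1,\dots,x_p\in K_0\) with \(K\subseteq\bigcup_\ell N_{x_\ell}\) and \(K\cap N_{x_\ell}\neq\emptyset\), and then runs your combinatorial check on \(\{x_1,\dots,x_p\}\). Your argument becomes correct if you either adopt this order of operations, or equivalently, choose the \(a_\ell\)'s by applying compactness of \(A\) to the cover \(\{W_x:x\in A\}\) \emph{after} the \(W_x\)'s have been defined from the basic-open cover of \(\mathbf A\).
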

\begin{proof}
    \ref{CoverTranslationBelow}
    Suppose \(\mathscr U \in \mathcal O(X,\mathcal A^X)\) and \(\mathbf K \in \mathcal A^{\mathbb K(X)}\).
    Since \(\bigcup \mathbf K \in \mathcal A^X\), there exists \(U \in \mathscr U\) so that \(\bigcup\mathbf K \subseteq U\).
    Hence, \(\mathbf K \subseteq [U]\).

    \ref{CoverTranslationRefines}
    Suppose \(\mathscr U \in \mathcal O(\mathbb A, \mathcal A^{\mathbb K(X)})\) and let \(K_0 \in \mathcal A^X\) be arbitrary.
    Observe that
    \[
    K_0^\ast := \{ K \in \mathbb A : K \subseteq K_0 \} \in \mathcal A^{\mathbb K(X)}.
    \]
    Let \(U \in \mathscr U\) be so that \(K_0^\ast \subseteq U\).
    Now, for each \(K \in K_0^\ast\), we can find a basic neighborhood \(\mathscr B_K\) so that \(K \in \mathscr B_K \subseteq U\).
    By compactness, there are \(K_1 , \ldots , K_n\) so that \(K_0^\ast \subseteq \bigcup_{j=1}^n \mathscr B_{K_j}\).
    Let \(\mathscr B_{K_j} = [ W_{j,1} , \ldots , W_{j,m_j} ]\) for \(1 \leq j \leq n\).
    For \(x \in K_0\), set \(N_x = \bigcap \{ W_{j,k} : x \in W_{j,k} \}\) and define \(V = \bigcup_{x \in K_0} N_x\).
    Clearly, \(K_0 \subseteq V\) and thus \(K_0 \in [V]\), so it suffices to show that \([V] \subseteq U\).

    So let \(K \in [V]\); i.e. \(K \subseteq V\).
    Then we can find \(x_1 , \ldots , x_p \in K_0\) so that \(K \subseteq \bigcup_{\ell=1}^p N_{x_\ell}\) and \(K \cap N_{x_\ell} \neq \emptyset\) for each \(1 \leq \ell \leq p\).
    Since \(\{ x_1 , \ldots , x_p \}\) is a compact subset of \(K_0\), it must be an element of some \([W_{j,1} , \ldots , W_{j,m_j}]\).

    Now, for each \(1 \leq \ell \leq p\), there is a \(q_\ell \leq m_j\) so that \(x_\ell \in W_{j,q_\ell}\); thus \(N_{x_\ell} \subseteq W_{j,q_\ell} \subseteq \bigcup_{q=1}^{m_j} W_{j,q}\).
    So \(K \subseteq \bigcup_{\ell =1}^p N_{x_\ell} \subseteq \bigcup_{q=1}^{m_j} W_{j,q}\).

    For each \(1 \leq q \leq m_j\), let \(1 \leq \ell_q \leq p\) be so that \(x_{\ell_q} \in W_{j,q}\).
    As \(K \cap N_{x_{\ell_q}} \neq \emptyset\) and \(N_{x_{\ell_q}} \subseteq W_{j,q}\), we see that \(K \cap W_{j,q} \neq \emptyset\).

    Hence, we see that \(K \in [W_{j,1} , \ldots , W_{j,m_j}] \subseteq U\).
    Therefore \([V] \subseteq U\).
\end{proof}

\begin{lemma} \label{lem:MengerRothbergerBelow}
    Let \(X\) be a space and \(\mathcal A = \mathcal A^X\) be an ideal of compact subsets that is closed under \(\mathcal A\)-unions where \(\mathbb A = \mathcal A^X\) is viewed as a subspace of \(\mathbb K(X)\).
    Then, for \(\square \in \{ 1 , \text{fin} \}\),
    \[
        \textsf{G}_\square(\mathcal O_{\mathbb A} , \mathcal O_{\mathbb A}) \leq_{\text{II}} \textsf{G}_\square(\mathcal O(X,\mathcal A),\mathcal O(X,\mathcal A)).
    \]
\end{lemma}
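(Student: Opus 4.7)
The plan is to invoke Corollary~\ref{bestCorollary} with the translation of opens provided by the Vietoris basis. Set
\[
\varphi : \bigcup \mathcal{O}(X, \mathcal{A}^X) \to \bigcup \mathcal{O}_{\mathbb{A}},
\qquad
\varphi(U) = [U] \cap \mathbb{A} = \{ K \in \mathbb{A} : K \subseteq U \};
\]
each $\varphi(U)$ is open in $\mathbb{A}$ since $[U]$ is a Vietoris basic open in $\mathbb{K}(X)$. In the notation of Corollary~\ref{bestCorollary} I would take $\mathcal{A} = \mathcal{C} = \mathcal{O}_{\mathbb{A}}$ and $\mathcal{B} = \mathcal{D} = \mathcal{O}(X, \mathcal{A}^X)$; the conclusion then delivers exactly the $\leq_{\text{II}}$ direction being claimed.

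The first hypothesis of Corollary~\ref{bestCorollary} asks that $\varphi[\mathscr{U}] \in \mathcal{O}_{\mathbb{A}}$ whenever $\mathscr{U} \in \mathcal{O}(X, \mathcal{A}^X)$. This is essentially immediate from Lemma~\ref{lem:CoverTranslation}\ref{CoverTranslationBelow}: that lemma produces an $\mathcal{A}^{\mathbb{K}(X)}$-cover of $\mathbb{A}$ whose members are precisely the sets $[U]$, and every $\mathcal{A}^{\mathbb{K}(X)}$-cover is in particular a non-trivial open cover. Non-triviality (that $\mathbb{A} \not\in \varphi[\mathscr{U}]$) follows from the fact that each $U \in \mathscr{U}$ is a proper open subset of $X$: picking any $A \in \mathcal{A}^X$ with $A \not\subseteq U$ yields $A \in \mathbb{A} \setminus \varphi(U)$.

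The second hypothesis is the reverse implication: if $\mathscr{E}$ is a collection of proper open subsets of $X$ with $\varphi[\mathscr{E}] \in \mathcal{O}_{\mathbb{A}}$, then $\mathscr{E} \in \mathcal{O}(X, \mathcal{A}^X)$. This is direct --- given $A \in \mathcal{A}^X$, treat $A$ as a point of $\mathbb{A}$, invoke the cover condition on $\varphi[\mathscr{E}]$ to produce $U \in \mathscr{E}$ with $A \in \varphi(U)$, and unpack this as $A \subseteq U$. Corollary~\ref{bestCorollary} then concludes the argument. I do not expect a deep obstacle: Lemma~\ref{lem:CoverTranslation}\ref{CoverTranslationBelow} already encodes all of the topological work (it is exactly there that the ``closed under $\mathcal{A}$-unions'' hypothesis is used), and what remains is bookkeeping: matching the translation theorem's format and confirming non-triviality, which is routine under the standing non-compactness of $X$.
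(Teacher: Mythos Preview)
Your proof is correct and follows the same approach as the paper: define \(\varphi(U)=[U]\) and verify the two hypotheses of Corollary~\ref{bestCorollary}. One small remark: your appeal to Lemma~\ref{lem:CoverTranslation}\ref{CoverTranslationBelow} is more than you need, since that lemma produces an \(\mathcal A^{\mathbb K(X)}\)-cover while here only an ordinary open cover of \(\mathbb A\) is required---the paper simply notes this directly (any \(K\in\mathbb A=\mathcal A^X\) lies in some \([U]\) because \(\mathscr U\) is an \(\mathcal A\)-cover), and in particular the ``closed under \(\mathcal A\)-unions'' hypothesis is not actually used anywhere in this lemma.
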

\begin{proof}
    Define \(\varphi : \mathscr T_X \to \mathscr T_{\mathbb A}\) by the rule \(\varphi(U) = [U]\).
    Certainly, if \(\mathscr U \in \mathcal O(X,\mathcal A)\), then \(\varphi[\mathscr U] \in \mathcal O_{\mathbb A}\).
    Moreover, suppose \(\mathscr E\) is a collection of open subsets of \(X\) so that \(\varphi[\mathscr E] \in \mathcal O_{\mathbb A}\).
    It is clear that \(\mathscr E \in \mathcal O(X, \mathcal A)\).
    Hence, Corollary \ref{bestCorollary} applies.
\end{proof}
\begin{theorem} \label{thm:HyperspaceEquivalence}
    Let \(X\) be a space and \(\mathcal A = \mathcal A^X\) be an ideal of compact subsets that is closed under \(\mathcal A\)-unions where \(\mathbb A = \mathcal A^X\) is viewed as a subspace of \(\mathbb K(X)\) and \(\mathcal B = \mathcal A^{\mathbb K(X)}\) is the corresponding ideal of \(\mathbb A\).
    Then, for \(\square \in \{ 1 , \text{fin} \}\),
    \[
        \textsf{G}_\square\left(\mathcal O\left(X,\mathcal A\right),\mathcal O\left(X,\mathcal A\right)\right)
        \equiv \textsf{G}_\square\left(\mathcal O\left(\mathbb A,\mathcal B\right),\mathcal O\left(\mathbb A,\mathcal B\right)\right).
    \]
\end{theorem}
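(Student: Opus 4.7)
The plan is to invoke Corollary \ref{bestCorollary} a single time, using its stronger ``if, in addition'' clause so that the full equivalence drops out at once. The translating map is $\varphi : \mathscr T_X \to \mathscr T_{\mathbb A}$ defined by $\varphi(U) = [U]$, and I set the source game to be $\textsf G_\square(\mathcal O(\mathbb A, \mathcal B), \mathcal O(\mathbb A, \mathcal B))$ and the target game to be $\textsf G_\square(\mathcal O(X, \mathcal A), \mathcal O(X, \mathcal A))$. The inclusion hypotheses $\bigcup \mathcal C \subseteq \bigcup \mathcal A$ and $\bigcup \mathcal D \subseteq \bigcup \mathcal B$ are immediate, since both ambient unions on the hyperspace side live in $\mathscr T_{\mathbb A}$ and both on the ground-space side live in $\mathscr T_X$. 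All the substantive translation work has already been done in Lemma \ref{lem:CoverTranslation}, so what remains is a bookkeeping exercise.

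For the two basic hypotheses (which yield the direction $\textsf G_\square(\mathcal O(\mathbb A, \mathcal B), \mathcal O(\mathbb A, \mathcal B)) \leq_{\text{II}} \textsf G_\square(\mathcal O(X, \mathcal A), \mathcal O(X, \mathcal A))$): given $\mathscr U \in \mathcal O(X, \mathcal A)$, Lemma \ref{lem:CoverTranslation}\ref{CoverTranslationBelow} provides $\varphi[\mathscr U] = \{[U] : U \in \mathscr U\} \in \mathcal O(\mathbb A, \mathcal B)$. Conversely, suppose $E \subseteq \mathscr T_X$ satisfies $\varphi[E] \in \mathcal O(\mathbb A, \mathcal B)$. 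For any nonempty $A \in \mathcal A$, the set $A^\ast := \{K \in \mathbb A : K \subseteq A\}$ is an element of $\mathcal B$ precisely by the closure under $\mathcal A$-unions, so some $V \in E$ satisfies $A^\ast \subseteq [V]$; since $A \in A^\ast$, this forces $A \subseteq V$, and hence $E \in \mathcal O(X, \mathcal A)$.

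For the three additional hypotheses (which upgrade the conclusion to equivalence): first, $\mathcal O(\mathbb A, \mathcal B)$ is a cover class of the form $\mathcal O(Y, \mathcal F)$ with $\mathcal F$ a family of closed subsets, hence is closed under enlargement by the general observation recorded in Section~2. Second, given any $\mathscr U \in \mathcal O(\mathbb A, \mathcal B)$, the $\varphi$-refinement of $\mathscr U$ is the set $\{V \in \mathscr T_X : \exists U \in \mathscr U\,([V] \subseteq U)\}$, which is exactly the cover produced in Lemma \ref{lem:CoverTranslation}\ref{CoverTranslationRefines} and therefore lies in $\mathcal O(X, \mathcal A)$. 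Third, the implication ``$E \in \mathcal O(X, \mathcal A) \implies \varphi[E] \in \mathcal O(\mathbb A, \mathcal B)$'' is once again Lemma \ref{lem:CoverTranslation}\ref{CoverTranslationBelow}. Corollary \ref{bestCorollary} therefore delivers the equivalence.

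There is no real obstacle; the only potentially subtle step is the verification of the converse condition in the second paragraph, where one must remember to invoke the $\mathcal A$-union closure to produce $A^\ast \in \mathcal B$ and then exploit the trivial membership $A \in A^\ast$ to transfer the hyperspace $\mathcal B$-cover condition back to a ground-space $\mathcal A$-cover condition.
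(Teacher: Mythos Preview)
Your proposal is correct and follows essentially the same approach as the paper: both apply Corollary \ref{bestCorollary} with the map $\varphi(U)=[U]$, verify the forward and backward cover-translation hypotheses via Lemma \ref{lem:CoverTranslation}\ref{CoverTranslationBelow} and the observation $A \in A^\ast \in \mathcal B$, and then check the three additional conditions (enlargement closure, $\varphi$-refinement via Lemma \ref{lem:CoverTranslation}\ref{CoverTranslationRefines}, and the forward image condition again) to obtain the full equivalence. Your write-up is slightly more explicit about invoking the $\mathcal A$-union closure to get $A^\ast \in \mathcal B$, but the argument is the same.
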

\begin{proof}
    Define \(\varphi : \mathscr T_X \to \mathscr T_{\mathbb A}\) by the rule \(\varphi(U) = [U]\).
    If \(\mathscr U \in \mathcal O(X,\mathcal A)\), then, by Lemma \ref{lem:CoverTranslation}, \(\varphi[\mathscr U] \in \mathcal O(\mathbb A,\mathcal B)\).
    If \(\mathscr E\) is a collection of open subsets of \(X\) so that \(\varphi[\mathscr E] \in \mathcal O(\mathbb A, \mathcal B)\), then \(\mathscr E \in \mathcal O(X,\mathcal A)\).
    This is seen by considering any \(K \in \mathcal A\) and noticing that \(K \in \{ F \in \mathbb A : F \subseteq K \} \in \mathcal B\).

    Certainly, \(\mathcal O(\mathbb A,\mathcal B)\) is closed under enlargement and, by Lemma \ref{lem:CoverTranslation}, we see that the \(\varphi\)-refinement of \(\mathscr U \in \mathcal O(\mathbb A, \mathcal B)\) is an element of \(\mathcal O(X,\mathcal A)\).
    Therefore, Corollary \ref{bestCorollary} applies and the proof is complete.
\end{proof}

\subsection{The Space of Finite Subsets} \label{section:Finite}

Let \(\mathcal P_{\text{fin}}(X)\) be the subspace of \(\mathbb K(X)\) consisting of the finite subsets of \(X\).
As in Section \ref{section:Inspiration}, one would anticipate some relationship between open covers of \(\mathcal P_{\text{fin}}(X)\) and \(\omega\)-covers of \(X\).
To some degree, \(\mathcal P_{\text{fin}}(X)\) also avoids all of the unnecessary information the finite powers offer such as repetition and order.
To begin, we offer an analog to Lemma \ref{lem:RectangleRefinement} which follows immediately from Lemmas \ref{lem:UnionsClosureIdeals} and \ref{lem:CoverTranslation}.
\begin{corollary} \label{cor:FiniteBelowRefinement}
    Let \(X\) be a space.
    \begin{enumerate}[label=(\alph*)]
        \item \label{lem:FBR_Belows}
        If \(\mathscr U\) is an \(\omega\)-cover of \(X\), then \(\{[U] : U \in \mathscr U\}\) is an \(\omega\)-cover of \(\mathcal P_{\text{fin}}(X)\).
        \item \label{lem:FBR_Refine}
        If \(\mathscr U\) is an \(\omega\)-cover of \(\mathcal P_{\text{fin}}(X)\), then
        \[
            \mathscr V = \{ V \in \mathscr T_X : \exists U \in \mathscr U ([V] \subseteq U) \}
        \]
        is an \(\omega\)-cover of \(X\).
    \end{enumerate}
\end{corollary}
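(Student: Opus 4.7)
The plan is to observe that Corollary \ref{cor:FiniteBelowRefinement} is a direct instantiation of Lemma \ref{lem:CoverTranslation} when the ideal \(\mathcal A^X\) is taken to be the collection of finite subsets of \(X\), using Lemma \ref{lem:UnionsClosureIdeals} to license that application. So the proof will essentially be a verification that, under this choice of ideal, the objects appearing in Lemma \ref{lem:CoverTranslation} specialize to exactly the objects in the statement.

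First, I would let \(\mathcal A^X\) denote the ideal of finite subsets of \(X\). By the first bullet of Lemma \ref{lem:UnionsClosureIdeals}, this ideal is closed under \(\mathcal A\)-unions, and the witnessing ideal \(\mathcal A^{\mathbb K(X)}\) on the subspace \(\mathbb A := \mathcal A^X\) of \(\mathbb K(X)\) is (as the proof of that lemma makes explicit) just the collection of finite subsets of \(\mathbb A\). With these identifications, I would then point out that \(\mathbb A = \mathcal P_{\text{fin}}(X)\) and that the classes of \(\mathcal A^X\)-covers and \(\mathcal A^{\mathbb K(X)}\)-covers are precisely the classes of \(\omega\)-covers of \(X\) and of \(\mathcal P_{\text{fin}}(X)\), respectively, by the definitions recorded in the remark following Definition 3.

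With the translation in hand, part \ref{lem:FBR_Belows} is exactly Lemma \ref{lem:CoverTranslation}\ref{CoverTranslationBelow}, and part \ref{lem:FBR_Refine} is exactly Lemma \ref{lem:CoverTranslation}\ref{CoverTranslationRefines}, under the above dictionary. So the entire proof reduces to writing down this dictionary and invoking the two cited results.

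There is no real obstacle here; the only point that deserves a sentence of care is verifying that \(\mathcal A^{\mathbb K(X)}\) from Lemma \ref{lem:UnionsClosureIdeals} really does produce the class \(\Omega_{\mathcal P_{\text{fin}}(X)}\) as its \(\mathcal O(\mathbb A, \mathcal A^{\mathbb K(X)})\), i.e. that asking a cover to catch every finite subset of \(\mathcal P_{\text{fin}}(X)\) inside a single open set is the same as asking it to be an \(\omega\)-cover of \(\mathcal P_{\text{fin}}(X)\); this is immediate from the definition of \(\omega\)-cover. Hence the corollary follows with no further work.
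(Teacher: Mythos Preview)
Your proposal is correct and matches the paper's approach exactly: the paper states that the corollary ``follows immediately from Lemmas \ref{lem:UnionsClosureIdeals} and \ref{lem:CoverTranslation},'' which is precisely the instantiation you describe. Your extra sentence confirming that \(\mathcal O(\mathbb A,\mathcal A^{\mathbb K(X)}) = \Omega_{\mathcal P_{\text{fin}}(X)}\) is a welcome clarification but not strictly needed.
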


\begin{theorem} \label{thm:OmegaMenger}
    For any space \(X\) and \(\square \in \{1,\mbox{fin}\}\),
    \[
        \textsf{G}_{\square}(\Omega_X,\Omega_X)
        \equiv \textsf{G}_{\square}(\Omega_{\mathcal P_{\text{fin}}(X)}, \Omega_{\mathcal P_{\text{fin}}(X)})
        \equiv \textsf{G}_{\square}(\mathcal O_{\mathcal P_{\text{fin}}(X)}, \mathcal O_{\mathcal P_{\text{fin}}(X)}).
    \]
\end{theorem}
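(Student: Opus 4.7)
The plan is to establish the three-way equivalence by a cycle of \(\leq_{\text{II}}\) inequalities. Writing \(Y = \mathcal P_{\text{fin}}(X)\), I will argue \(\textsf{G}_{\square}(\Omega_X,\Omega_X) \equiv \textsf{G}_{\square}(\Omega_Y,\Omega_Y)\) directly from the hyperspace equivalence theorem, and then bracket \(\textsf{G}_{\square}(\mathcal O_Y,\mathcal O_Y)\) between the two \(\Omega\)-games, with one side of that bracket routed through \(\textsf{G}_{\square}(\Omega_X,\Omega_X)\).

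The first equivalence comes from applying Theorem~\ref{thm:HyperspaceEquivalence} to \(\mathcal A^X\) equal to the ideal of finite subsets of \(X\). By Lemma~\ref{lem:UnionsClosureIdeals} this ideal is closed under \(\mathcal A\)-unions, and inspection of the proof there shows the corresponding ideal \(\mathcal A^{\mathbb K(X)}\) on \(Y\) may be taken to be the ideal of finite subsets of \(Y\). Hence \(\mathcal O(X,\mathcal A^X) = \Omega_X\) and \(\mathcal O(Y,\mathcal A^{\mathbb K(X)}) = \Omega_Y\), so Theorem~\ref{thm:HyperspaceEquivalence} delivers \(\textsf{G}_{\square}(\Omega_X,\Omega_X) \equiv \textsf{G}_{\square}(\Omega_Y,\Omega_Y)\).

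The inequality \(\textsf{G}_{\square}(\Omega_Y,\Omega_Y) \leq_{\text{II}} \textsf{G}_{\square}(\mathcal O_Y,\mathcal O_Y)\) follows from Lemma~\ref{lem:LessThanMenger} (for \(\square = \text{fin}\)) or Lemma~\ref{lem:OmegaMengerLessThanMenger} (for \(\square = 1\)), applied to \(Y\) with the ideal of finite subsets of \(Y\); both lemmas are stated for a general space, so they apply verbatim. For the reverse \(\textsf{G}_{\square}(\mathcal O_Y,\mathcal O_Y) \leq_{\text{II}} \textsf{G}_{\square}(\Omega_X,\Omega_X)\), I will invoke the sufficient half of Corollary~\ref{bestCorollary} via the map \(\varphi : \mathscr T_X \to \mathscr T_Y\) given by \(\varphi(U) = [U]\), taking \(\mathcal A = \mathcal C = \mathcal O_Y\) and \(\mathcal B = \mathcal D = \Omega_X\). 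The two verifications — that \(\varphi[\mathscr U]\) covers \(Y\) whenever \(\mathscr U \in \Omega_X\), and that \(\varphi[\mathscr E] \in \mathcal O_Y\) forces \(\mathscr E \in \Omega_X\) — are both immediate since \(F \in [U]\) if and only if \(F \subseteq U\) for a finite \(F \subseteq X\). Transitivity of \(\leq_{\text{II}}\) then closes the loop.

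The only place I would expect a first attempt to slip is in trying to prove \(\textsf{G}_{\square}(\Omega_Y,\Omega_Y) \equiv \textsf{G}_{\square}(\mathcal O_Y,\mathcal O_Y)\) in one shot via the full equivalence clause of Corollary~\ref{bestCorollary}. That approach would need the \(\varphi\)-refinement of an arbitrary open cover of \(Y\) to be an \(\omega\)-cover of \(X\), but the argument underlying the refinement half of Lemma~\ref{lem:CoverTranslation} crucially uses that the downward-closed compact family \(\{K \in Y : K \subseteq F\}\) sits inside a single member of the cover — a hypothesis one gets for \(\omega\)-covers of \(Y\) but not for arbitrary open covers. Routing the inequality through \(\textsf{G}_{\square}(\Omega_X,\Omega_X)\) sidesteps this obstruction entirely.
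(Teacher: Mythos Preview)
Your proof is correct and follows essentially the same route as the paper: the first equivalence via Theorem~\ref{thm:HyperspaceEquivalence}, the inequality \(\textsf{G}_{\square}(\Omega_Y,\Omega_Y) \leq_{\text{II}} \textsf{G}_{\square}(\mathcal O_Y,\mathcal O_Y)\) via Lemmas~\ref{lem:LessThanMenger} and~\ref{lem:OmegaMengerLessThanMenger}, and the closing inequality \(\textsf{G}_{\square}(\mathcal O_Y,\mathcal O_Y) \leq_{\text{II}} \textsf{G}_{\square}(\Omega_X,\Omega_X)\) via \(\varphi(U)=[U]\) and Corollary~\ref{bestCorollary}. The paper packages that last step as Lemma~\ref{lem:MengerRothbergerBelow}, whose proof is exactly the argument you wrote out, so the only difference is that you inlined it.
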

\begin{proof}
    The fact that
    \[
        \textsf{G}_{\square}(\Omega_X, \Omega_X)
        \equiv \textsf{G}_{\square}(\Omega_{\mathcal P_{\text{fin}}(X)}, \Omega_{\mathcal P_{\text{fin}}(X)})
    \]
    follows from Theorem \ref{thm:HyperspaceEquivalence}.
    By Lemmas \ref{lem:LessThanMenger} and \ref{lem:OmegaMengerLessThanMenger}, we see that
    \[
        \textsf{G}_{\square}(\Omega_{\mathcal P_{\text{fin}}(X)}, \Omega_{\mathcal P_{\text{fin}}(X)})
        \leq_{\text{II}} \textsf{G}_{\square}(\mathcal O_{\mathcal P_{\text{fin}}(X)} , \mathcal O_{\mathcal P_{\text{fin}}(X)}).
    \]
    Finally,
    \[
        \textsf{G}_{\square}(\mathcal O_{\mathcal P_{\text{fin}}(X)} , \mathcal O_{\mathcal P_{\text{fin}}(X)})
        \leq_{\text{II}} \textsf{G}_{\square}(\Omega_X, \Omega_X).
    \]
    follows from Lemma \ref{lem:MengerRothbergerBelow}.
    This finishes the proof.
\end{proof}

\begin{corollary}
    For any space \(X\), the following are equivalent:
    \begin{enumerate}[label=(\alph*)]
        \item \label{epsilonSpaceToFin}
        \(X\) is an \(\epsilon\)-space
        \item \label{epsilonFinToLindelof}
        \(\mathcal P_{\text{fin}}(X)\) is an \(\epsilon\)-space
        \item \label{FinLindelofToEpsilon}
        \(\mathcal P_{\text{fin}}(X)\) is Lindel{\"{o}}f.
    \end{enumerate}
\end{corollary}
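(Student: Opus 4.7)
The plan is to reduce each of the three properties to the statement that player One lacks a constant winning strategy in an appropriately chosen single selection game, and then to invoke Theorem \ref{thm:OmegaMenger} to see that all three games are equivalent in the sense of \(\equiv\).

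First I would unpack definitions using Lemma \ref{lem:Lindelof}. The \(\epsilon\)-space property for \(X\) is exactly \(X \models \genfrac(){0pt}{1}{\Omega}{\text{ctbl}(\Omega)}\), which by Lemma \ref{lem:Lindelof} is equivalent to \(\text{I} \underset{\text{cnst}}{\not\uparrow} \textsf{G}_1(\Omega_X, \Omega_X)\). Similarly, the \(\epsilon\)-space property for \(\mathcal P_{\text{fin}}(X)\) is equivalent to \(\text{I} \underset{\text{cnst}}{\not\uparrow} \textsf{G}_1(\Omega_{\mathcal P_{\text{fin}}(X)}, \Omega_{\mathcal P_{\text{fin}}(X)})\), and Lindelöfness of \(\mathcal P_{\text{fin}}(X)\) is equivalent to \(\text{I} \underset{\text{cnst}}{\not\uparrow} \textsf{G}_1(\mathcal O_{\mathcal P_{\text{fin}}(X)}, \mathcal O_{\mathcal P_{\text{fin}}(X)})\).

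Next I would invoke Theorem \ref{thm:OmegaMenger} specialized to \(\square = 1\), which yields
\[
    \textsf{G}_1(\Omega_X,\Omega_X)
    \equiv \textsf{G}_1(\Omega_{\mathcal P_{\text{fin}}(X)}, \Omega_{\mathcal P_{\text{fin}}(X)})
    \equiv \textsf{G}_1(\mathcal O_{\mathcal P_{\text{fin}}(X)}, \mathcal O_{\mathcal P_{\text{fin}}(X)}).
\]
Since the relation \(\equiv\) explicitly includes the bi-implication \(\text{I} \underset{\text{cnst}}{\not\uparrow} \mathcal G \iff \text{I} \underset{\text{cnst}}{\not\uparrow} \mathcal H\), the three constant-strategy non-existence statements identified in the previous paragraph are mutually equivalent. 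Reading the chain back through Lemma \ref{lem:Lindelof} delivers (a) \(\iff\) (b) \(\iff\) (c).

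There is no real obstacle here, as the heavy lifting was done by Theorem \ref{thm:OmegaMenger}; the only thing that needs care is matching the three properties (a), (b), (c) with the corresponding selection games and verifying that the required flavor of strategic transfer (the constant strategy line in the definition of \(\equiv\)) is the one supplied by the existing framework.
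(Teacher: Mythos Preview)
Your proposal is correct and is exactly the argument the paper intends: the corollary is stated without proof immediately after Theorem \ref{thm:OmegaMenger}, and the implicit reasoning is precisely to combine that theorem (for \(\square = 1\)) with Lemma \ref{lem:Lindelof} via the constant-strategy clause in the definition of \(\equiv\).
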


\begin{corollary}
    For any space \(X\), the following are equivalent:
    \begin{enumerate}[label=(\alph*)]
        \item
        \(X \models \textsf{S}_{\text{fin}}(\Omega, \Omega)\)
        \item
        \(\mathcal P_{\text{fin}}(X) \models \textsf{S}_{\text{fin}}(\Omega, \Omega)\)
        \item
        \(\mathcal P_{\text{fin}}(X)\) is Menger.
    \end{enumerate}
\end{corollary}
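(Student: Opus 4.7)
The plan is to derive this corollary as an immediate consequence of Theorem \ref{thm:OmegaMenger} specialized to \(\square = \text{fin}\), combined with the characterization of the selection principle \(\textsf{S}_{\text{fin}}(\mathcal A, \mathcal B)\) as the negation of a winning predetermined strategy for player One in \(\textsf{G}_{\text{fin}}(\mathcal A, \mathcal B)\).

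First, I would translate each of the three conditions into the equivalent game-theoretic statement via the remark following the definition of \(\textsf{S}_\square\). Explicitly, condition (a) becomes \(\text{I} \underset{\text{pre}}{\not\uparrow} \textsf{G}_{\text{fin}}(\Omega_X, \Omega_X)\); condition (b) becomes \(\text{I} \underset{\text{pre}}{\not\uparrow} \textsf{G}_{\text{fin}}(\Omega_{\mathcal P_{\text{fin}}(X)}, \Omega_{\mathcal P_{\text{fin}}(X)})\); and condition (c), the statement that \(\mathcal P_{\text{fin}}(X)\) is Menger, unfolds to \(\mathcal P_{\text{fin}}(X) \models \textsf{S}_{\text{fin}}(\mathcal O, \mathcal O)\) and hence to \(\text{I} \underset{\text{pre}}{\not\uparrow} \textsf{G}_{\text{fin}}(\mathcal O_{\mathcal P_{\text{fin}}(X)}, \mathcal O_{\mathcal P_{\text{fin}}(X)})\).

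Next, I would invoke Theorem \ref{thm:OmegaMenger} with \(\square = \text{fin}\), which provides the chain of equivalences
\[
    \textsf{G}_{\text{fin}}(\Omega_X,\Omega_X)
    \equiv \textsf{G}_{\text{fin}}(\Omega_{\mathcal P_{\text{fin}}(X)}, \Omega_{\mathcal P_{\text{fin}}(X)})
    \equiv \textsf{G}_{\text{fin}}(\mathcal O_{\mathcal P_{\text{fin}}(X)}, \mathcal O_{\mathcal P_{\text{fin}}(X)}).
\]
By the definition of game equivalence \(\equiv\), one of the listed biconditionals is precisely \(\text{I} \underset{\text{pre}}{\not\uparrow} \mathcal G \iff \text{I} \underset{\text{pre}}{\not\uparrow} \mathcal H\). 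Chaining this across the two equivalences yields that the three predetermined-strategy statements above are all simultaneously equivalent, which is exactly the required equivalence of (a), (b), and (c).

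I do not anticipate any real obstacle, since Theorem \ref{thm:OmegaMenger} already packages all the work: the translation between \(\omega\)-covers of \(X\) and \(\omega\)-covers or open covers of \(\mathcal P_{\text{fin}}(X)\) is done once in Lemma \ref{lem:CoverTranslation} and absorbed into the game equivalence, so the corollary amounts to reading off the \(\square = \text{fin}\) case and converting each selection principle to its Pawlikowski-style characterization via predetermined strategies.
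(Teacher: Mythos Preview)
Your proposal is correct and matches the paper's intent: the corollary is stated without proof precisely because it is meant to be read off from Theorem~\ref{thm:OmegaMenger} with \(\square = \text{fin}\), together with the remark that \(\textsf{S}_{\text{fin}}(\mathcal A,\mathcal B)\) holds if and only if \(\text{I} \underset{\text{pre}}{\not\uparrow} \textsf{G}_{\text{fin}}(\mathcal A,\mathcal B)\). The only minor quibble is terminological: the translation from a selection principle to a predetermined-strategy statement is due to Clontz (Remark after the definition of \(\textsf{S}_\square\)), not Pawlikowski, whose theorem concerns upgrading predetermined to full strategies.
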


\begin{corollary}\label{cor:OmegaRothberger}
    For any space \(X\), the following are equivalent:
    \begin{enumerate}[label=(\alph*)]
        \item
        \(X \models \textsf{S}_{1}(\Omega, \Omega)\)
        \item
        \(\mathcal P_{\text{fin}}(X) \models \textsf{S}_{1}(\Omega, \Omega)\)
        \item
        \(\mathcal P_{\text{fin}}(X)\) is Rothberger.
    \end{enumerate}
\end{corollary}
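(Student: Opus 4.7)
The plan is to deduce this corollary almost immediately from Theorem \ref{thm:OmegaMenger} specialized to the single-selection case $\square = 1$, together with the standard translation between selection principles and predetermined strategies for One, as recorded in the remark following the definition of $\textsf{S}_\square$ and in \cite[Prop. 15]{ClontzDuality}.

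First, I would rewrite each of the three conditions in game-theoretic form. By the cited remark, \ref{a} holds if and only if $\text{I} \underset{\text{pre}}{\not\uparrow} \textsf{G}_1(\Omega_X,\Omega_X)$. Similarly, \ref{b} holds if and only if $\text{I} \underset{\text{pre}}{\not\uparrow} \textsf{G}_1(\Omega_{\mathcal P_{\text{fin}}(X)},\Omega_{\mathcal P_{\text{fin}}(X)})$. Finally, since the Rothberger property is $\textsf{S}_1(\mathcal O,\mathcal O)$, condition \ref{c} holds if and only if $\text{I} \underset{\text{pre}}{\not\uparrow} \textsf{G}_1(\mathcal O_{\mathcal P_{\text{fin}}(X)},\mathcal O_{\mathcal P_{\text{fin}}(X)})$.

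Now I would invoke Theorem \ref{thm:OmegaMenger} with $\square=1$, which yields the chain of game equivalences
\[
    \textsf{G}_1(\Omega_X,\Omega_X)
    \equiv \textsf{G}_1(\Omega_{\mathcal P_{\text{fin}}(X)},\Omega_{\mathcal P_{\text{fin}}(X)})
    \equiv \textsf{G}_1(\mathcal O_{\mathcal P_{\text{fin}}(X)},\mathcal O_{\mathcal P_{\text{fin}}(X)}).
\]
By the definition of $\equiv$, one of the preserved statements is precisely $\text{I} \underset{\text{pre}}{\not\uparrow} (\cdot)$. Reading off this clause across the chain gives the equivalence of the three reformulated conditions, and therefore of \ref{a}, \ref{b}, \ref{c}.

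There is really no obstacle here: the heavy lifting has already been done in Theorem \ref{thm:OmegaMenger} (via Lemmas \ref{lem:LessThanMenger}, \ref{lem:OmegaMengerLessThanMenger}, and \ref{lem:MengerRothbergerBelow} together with Theorem \ref{thm:HyperspaceEquivalence}). The only thing to be careful about is stating the translation between the selection principle $\textsf{S}_1$ and the predetermined-strategy clause in $\equiv$, so that the reader sees explicitly which of the five implications defining $\equiv$ is being used. No further construction, no bijection $\omega^2 \to \omega$, and no appeal to Pawlikowski or Sakai is needed at this stage, since all of that is already packaged inside Theorem \ref{thm:OmegaMenger}.
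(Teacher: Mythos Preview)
Your proposal is correct and is exactly the intended derivation: the paper states this as an immediate corollary of Theorem \ref{thm:OmegaMenger} (with $\square=1$) and gives no separate proof, and you have simply made explicit the translation between $\textsf{S}_1$ and the clause $\text{I} \underset{\text{pre}}{\not\uparrow}$ in the definition of $\equiv$.
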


\begin{corollary}[Scheepers \cite{Scheepers}]
    For any space \(X\) and \(\square\in\{1,\text{fin}\}\),
    \[
        \text{I} \underset{\text{pre}}{\uparrow} \textsf{G}_{\square}(\Omega_X,\Omega_X)
        \iff \text{I} \uparrow \textsf{G}_{\square}(\Omega_X,\Omega_X).
    \]
\end{corollary}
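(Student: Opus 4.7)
The plan is to deduce this corollary directly from Theorem \ref{thm:OmegaMenger} and Pawlikowski's Theorem \ref{thm:Pawlikowski}. The backward direction \(\text{I} \underset{\text{pre}}{\uparrow} \textsf{G}_{\square}(\Omega_X,\Omega_X) \implies \text{I} \uparrow \textsf{G}_{\square}(\Omega_X,\Omega_X)\) is immediate, since any predetermined winning strategy is, by definition, a winning strategy. So the work is entirely in the forward direction.

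For the forward direction, the key observation is that Pawlikowski's theorem applies to games played with \emph{ordinary} open covers \(\mathcal O_Y\) on a space \(Y\), not to games played with \(\omega\)-covers on \(X\). The strategy is therefore to transfer the question from \(\Omega_X\) to \(\mathcal O_Y\) for a well-chosen \(Y\), use Pawlikowski, and transfer back. The obvious candidate is \(Y = \mathcal P_{\text{fin}}(X)\), exactly because Theorem \ref{thm:OmegaMenger} gives us the equivalence
\[
    \textsf{G}_{\square}(\Omega_X,\Omega_X) \equiv \textsf{G}_{\square}(\mathcal O_{\mathcal P_{\text{fin}}(X)},\mathcal O_{\mathcal P_{\text{fin}}(X)}),
\]
and, by definition of \(\equiv\), this equivalence preserves both the non-existence of a winning strategy for One and the non-existence of a winning predetermined strategy for One; hence (by contraposition) it also preserves the \emph{existence} of such strategies.

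So the proof assembles as a three-step chain: assume \(\text{I} \uparrow \textsf{G}_{\square}(\Omega_X,\Omega_X)\); apply the equivalence from Theorem \ref{thm:OmegaMenger} to obtain \(\text{I} \uparrow \textsf{G}_{\square}(\mathcal O_{\mathcal P_{\text{fin}}(X)},\mathcal O_{\mathcal P_{\text{fin}}(X)})\); apply Pawlikowski's Theorem \ref{thm:Pawlikowski} with the space \(\mathcal P_{\text{fin}}(X)\) to upgrade this to a winning predetermined strategy, obtaining \(\text{I} \underset{\text{pre}}{\uparrow} \textsf{G}_{\square}(\mathcal O_{\mathcal P_{\text{fin}}(X)},\mathcal O_{\mathcal P_{\text{fin}}(X)})\); and finally invoke the equivalence from Theorem \ref{thm:OmegaMenger} once more to conclude \(\text{I} \underset{\text{pre}}{\uparrow} \textsf{G}_{\square}(\Omega_X,\Omega_X)\).

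There is no real obstacle here: all of the genuine content lives in Theorem \ref{thm:OmegaMenger} (which packaged the translation between \(\omega\)-covers of \(X\) and open covers of \(\mathcal P_{\text{fin}}(X)\)) and in Pawlikowski's theorem. The only thing to check carefully in writing the proof is that the preservation goes in the right direction for both \(\text{I} \uparrow\) and \(\text{I} \underset{\text{pre}}{\uparrow}\); the definition of \(\equiv\) lists these as biconditionals on the non-existence forms, so contraposing each biconditional gives exactly what is needed in both directions. Once that is noted, the chain closes and the corollary follows.
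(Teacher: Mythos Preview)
Your proposal is correct and matches the paper's own proof, which simply says the result follows immediately from Theorems~\ref{thm:Pawlikowski} and~\ref{thm:OmegaMenger}. One cosmetic remark: you have the labels ``forward'' and ``backward'' swapped (the implication \(\text{I} \underset{\text{pre}}{\uparrow} \Rightarrow \text{I} \uparrow\) is the forward direction of the stated biconditional), but the mathematics is unaffected.
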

\begin{proof}
    These follow immediately from Theorems \ref{thm:Pawlikowski} and \ref{thm:OmegaMenger}.
\end{proof}

\subsection{The Space of Compact Subsets} \label{sec:Compacts}

When we move to compact sets, one might expect that all of the analogous theorems from Sections \ref{section:Inspiration} and \ref{section:Finite} hold between \(k\)-covers of \(X\) and open covers of \(\mathbb K(X)\).
Though we cannot obtain the full scope of those results, we are able recover a significant fragment; namely, everything about finite selection games goes through, and we are able to recover a version of Pawlikowki's theorem.

In a similar spirit to the results of Lemma \ref{lem:RectangleRefinement} and Corollary \ref{cor:FiniteBelowRefinement}, we establish a way to transfer \(k\)-cover information between \(X\) and \(\mathbb K(X)\).
It follows immediately from Lemmas \ref{lem:UnionsClosureIdeals} and \ref{lem:CoverTranslation}.
\begin{corollary} \label{cor:KBelowRefinement}
    Let \(X\) be a space.
    \begin{enumerate}[label=(\alph*)]
        \item \label{lem:KBR_Belows}
        If \(\mathscr U\) is a \(k\)-cover of \(X\), then \(\{[U] : U \in \mathscr U\}\) is a \(k\)-cover of \(\mathbb K(X)\).
        \item \label{lem:KBR_Refine}
        If \(\mathscr U\) is a \(k\)-cover of \(\mathbb K(X)\), then
        \[
            \mathscr V = \{ V \in \mathscr T_X : \exists U \in \mathscr U ([V] \subseteq U) \}
        \]
        is a \(k\)-cover of \(X\).
    \end{enumerate}
\end{corollary}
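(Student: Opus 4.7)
The plan is to apply Lemma \ref{lem:CoverTranslation} directly, using the ideal of compact subsets of $X$ as the ideal $\mathcal A^X$. The statement of Corollary \ref{cor:KBelowRefinement} is just the specialization of Lemma \ref{lem:CoverTranslation} to the $k$-cover setting, so the entire content is essentially the verification that the hypothesis of Lemma \ref{lem:CoverTranslation} is satisfied in this case.

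First, I would set $\mathcal A^X$ to be the ideal of all (proper, non-empty) compact subsets of $X$. By the second bullet of Lemma \ref{lem:UnionsClosureIdeals}, this ideal is closed under $\mathcal A$-unions; that is, there is an ideal $\mathcal A^{\mathbb K(X)}$ of compact subsets of $\mathbb A = \mathcal A^X$ (viewed as a subspace of $\mathbb K(X)$, which in this case is all of $\mathbb K(X)$) such that for every $K_0 \in \mathcal A^X$ the set $\{K \in \mathbb A : K \subseteq K_0\}$ is in $\mathcal A^{\mathbb K(X)}$, and for every $\mathbf K \in \mathcal A^{\mathbb K(X)}$ the union $\bigcup \mathbf K$ lies in $\mathcal A^X$. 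Concretely, $\mathcal A^{\mathbb K(X)}$ is the ideal of compact subsets of $\mathbb K(X)$.

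Next, I would identify the cover classes. Since $\mathcal A^X$ is the ideal of compact subsets of $X$, the class $\mathcal O(X,\mathcal A^X)$ is exactly $\mathcal K_X$, the $k$-covers of $X$. Similarly, since $\mathcal A^{\mathbb K(X)}$ is the ideal of compact subsets of $\mathbb K(X)$, the class $\mathcal O(\mathbb A, \mathcal A^{\mathbb K(X)})$ is exactly $\mathcal K_{\mathbb K(X)}$, the $k$-covers of $\mathbb K(X)$. With these identifications, parts \ref{lem:KBR_Belows} and \ref{lem:KBR_Refine} of Corollary \ref{cor:KBelowRefinement} are literally parts \ref{CoverTranslationBelow} and \ref{CoverTranslationRefines} of Lemma \ref{lem:CoverTranslation}.

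There is really no obstacle here; the work was done in Lemma \ref{lem:CoverTranslation}. The only thing to be careful about is making the translation from the abstract ideal language to the concrete $k$-cover language explicit, so the reader sees immediately why the general lemma applies. Accordingly, the write-up will be short: verify via Lemma \ref{lem:UnionsClosureIdeals} that the compact-set ideal is closed under $\mathcal A$-unions, note the identification $\mathcal O(X,\mathcal A^X) = \mathcal K_X$ and $\mathcal O(\mathbb A,\mathcal A^{\mathbb K(X)}) = \mathcal K_{\mathbb K(X)}$, and invoke Lemma \ref{lem:CoverTranslation}.
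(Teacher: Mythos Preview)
Your proposal is correct and matches the paper's approach exactly: the paper simply states that the corollary follows immediately from Lemmas~\ref{lem:UnionsClosureIdeals} and~\ref{lem:CoverTranslation}, which is precisely the argument you outline.
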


\begin{corollary} \label{cor:KEquivalence}
    For any space \(X\) and \(\square \in \{ 1 , \text{fin} \}\),
    \[
        \textsf{G}_{\square}(\mathcal O_{\mathbb K(X)}, \mathcal O_{\mathbb K(X)})
        \leq_{\text{II}} \textsf{G}_{\square}(\mathcal K_X, \mathcal K_X)
        \equiv \textsf{G}_{\square}(\mathcal K_{\mathbb K(X)}, \mathcal K_{\mathbb K(X)}).
    \]
\end{corollary}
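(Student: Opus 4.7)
The plan is to derive this corollary as a direct application of the two main results already established in Section \ref{sec:Compacts}, namely Lemma \ref{lem:MengerRothbergerBelow} and Theorem \ref{thm:HyperspaceEquivalence}, each instantiated with $\mathcal A^X$ taken to be the full ideal of compact subsets of $X$. The only hypothesis that needs checking is that this ideal is closed under $\mathcal A$-unions, and this is precisely the content of the second bullet of Lemma \ref{lem:UnionsClosureIdeals}. So both of the general results in this subsection are available with this choice of $\mathcal A$.

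Under this instantiation, the subspace $\mathbb A = \mathcal A^X$ of $\mathbb K(X)$ is all of $\mathbb K(X)$, and the corresponding ideal $\mathcal B = \mathcal A^{\mathbb K(X)}$ is the ideal of compact subsets of $\mathbb K(X)$. Matching this against the $\mathcal O(\,\cdot\,, \mathcal A)$ notation, we have $\mathcal O(X, \mathcal A^X) = \mathcal K_X$ and $\mathcal O(\mathbb A, \mathcal B) = \mathcal K_{\mathbb K(X)}$, so the general statements about $k$-covers of the ground space and $k$-covers of the hyperspace translate exactly into the two halves of the corollary.

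With these identifications in hand, the left-hand inequality $\textsf{G}_{\square}(\mathcal O_{\mathbb K(X)}, \mathcal O_{\mathbb K(X)}) \leq_{\text{II}} \textsf{G}_{\square}(\mathcal K_X, \mathcal K_X)$ is simply a restatement of Lemma \ref{lem:MengerRothbergerBelow}, while the equivalence $\textsf{G}_{\square}(\mathcal K_X, \mathcal K_X) \equiv \textsf{G}_{\square}(\mathcal K_{\mathbb K(X)}, \mathcal K_{\mathbb K(X)})$ is a restatement of Theorem \ref{thm:HyperspaceEquivalence}. There is no serious obstacle; the only point at which one has to exercise care is confirming that the hyperspace construction applied to the ideal of compact subsets is self-referential in the right way, i.e., that $\mathcal A^{\mathbb K(X)}$ really is (or can be taken to be) the ideal of compact subsets of $\mathbb K(X)$, which is exactly what Lemma \ref{lem:UnionsClosureIdeals} ensures. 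The corollary is thereby obtained as a bookkeeping assembly of prior results.
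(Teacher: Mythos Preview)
Your proposal is correct and follows the same route as the paper's own proof, which simply cites Lemma~\ref{lem:MengerRothbergerBelow} and Theorem~\ref{thm:HyperspaceEquivalence}; you have merely made explicit the instantiation (via Lemma~\ref{lem:UnionsClosureIdeals}) that the paper leaves to the reader.
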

\begin{proof}
    This follows immediately from Lemma \ref{lem:MengerRothbergerBelow} and Theorem \ref{thm:HyperspaceEquivalence}.
\end{proof}

\begin{theorem} \label{thm:KMenger}
    For any space \(X\),
    \[
        \textsf{G}_{\text{fin}}(\mathcal K_X, \mathcal K_X)
        \equiv \textsf{G}_{\text{fin}}(\mathcal K_{\mathbb K(X)}, \mathcal K_{\mathbb K(X)})
        \equiv \textsf{G}_{\text{fin}}(\mathcal O_{\mathbb K(X)}, \mathcal O_{\mathbb K(X)}).
    \]
\end{theorem}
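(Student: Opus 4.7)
The plan is to chain the equivalences already established with one additional application of Lemma \ref{lem:LessThanMenger}, this time to the hyperspace \(\mathbb K(X)\) itself. From Corollary \ref{cor:KEquivalence} I get, for free, the inequality
\[
    \textsf{G}_{\text{fin}}(\mathcal O_{\mathbb K(X)}, \mathcal O_{\mathbb K(X)}) \leq_{\text{II}} \textsf{G}_{\text{fin}}(\mathcal K_X, \mathcal K_X) \equiv \textsf{G}_{\text{fin}}(\mathcal K_{\mathbb K(X)}, \mathcal K_{\mathbb K(X)}),
\]
so the whole task reduces to producing the reverse direction
\[
    \textsf{G}_{\text{fin}}(\mathcal K_{\mathbb K(X)}, \mathcal K_{\mathbb K(X)}) \leq_{\text{II}} \textsf{G}_{\text{fin}}(\mathcal O_{\mathbb K(X)}, \mathcal O_{\mathbb K(X)}).
\]

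For this reverse direction, I would apply Lemma \ref{lem:LessThanMenger} directly to the Hausdorff non-compact space \(Y := \mathbb K(X)\), with the role of \(\mathcal A\) played by the ideal of compact proper subsets of \(\mathbb K(X)\). Since \(X\) is non-compact, so is \(\mathbb K(X)\), and since singletons are compact, we have \(\mathbb K(X) = \bigcup \mathcal A\), so the hypotheses of the lemma are satisfied. By definition, \(\mathcal O(\mathbb K(X), \mathcal A) = \mathcal K_{\mathbb K(X)}\) and \(\mathcal O_Y = \mathcal O_{\mathbb K(X)}\), so the lemma immediately yields the desired inequality.

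Combining with the chain from Corollary \ref{cor:KEquivalence} and using transitivity of \(\leq_{\text{II}}\) together with the remark that mutual \(\leq_{\text{II}}\)-reductions imply \(\equiv\), all three games are equivalent. There is no real obstacle here; the heavy lifting (the translation through the map \(U \mapsto [U]\) and the closure of \(\mathcal K^X\) under \(\mathcal K\)-unions) has already been absorbed into Theorem \ref{thm:HyperspaceEquivalence} via Lemma \ref{lem:CoverTranslation}, and the passage from \(k\)-covers of \(\mathbb K(X)\) to ordinary open covers of \(\mathbb K(X)\) is exactly what Lemma \ref{lem:LessThanMenger} was designed for. The only nuance worth flagging explicitly in the write-up is that Lemma \ref{lem:LessThanMenger} is restricted to finite selections, which is why the corresponding single-selection statement is absent from the theorem and is deferred to a later discussion.
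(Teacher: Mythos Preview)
Your proposal is correct and matches the paper's own proof, which simply cites Lemma \ref{lem:LessThanMenger} and Corollary \ref{cor:KEquivalence}. You have merely unpacked the one-line reference by spelling out that Lemma \ref{lem:LessThanMenger} is being applied to \(\mathbb K(X)\) with \(\mathcal A\) the ideal of compact subsets, and by noting why the finite-selection restriction is essential.
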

\begin{proof}
    This follows immediately from Lemma \ref{lem:LessThanMenger} and Corollary \ref{cor:KEquivalence}.
\end{proof}

\begin{corollary}
    For any space \(X\), the following are equivalent:
    \begin{enumerate}[label=(\alph*)]
        \item
        \(X\) is \(k\)-Lindel{\"{o}}f
        \item
        \(\mathbb K(X)\) is \(k\)-Lindel{\"{o}}f
        \item
        \(\mathbb K(X)\) is Lindel{\"{o}}f.
    \end{enumerate}
\end{corollary}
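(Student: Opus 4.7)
The plan is to reduce each of (a), (b), (c) to a statement about player One failing to have a constant winning strategy in a selection game, and then invoke the game equivalences already established in Theorem \ref{thm:KMenger}.

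First, using Lemma \ref{lem:Lindelof} three times, I would rewrite each property as a failure of the constant strategy for One: condition (a), that \(X\) is \(k\)-Lindel\"{o}f, is \(X \models \genfrac(){0pt}{0}{\mathcal K}{\text{ctbl}(\mathcal K)}\), which is equivalent to \(\text{I} \underset{\text{cnst}}{\not\uparrow} \textsf{G}_{\text{fin}}(\mathcal K_X, \mathcal K_X)\); condition (b) is equivalent to \(\text{I} \underset{\text{cnst}}{\not\uparrow} \textsf{G}_{\text{fin}}(\mathcal K_{\mathbb K(X)}, \mathcal K_{\mathbb K(X)})\); and condition (c), that \(\mathbb K(X)\) is Lindel\"{o}f, is \(\mathbb K(X) \models \genfrac(){0pt}{0}{\mathcal O}{\text{ctbl}(\mathcal O)}\), which is equivalent to \(\text{I} \underset{\text{cnst}}{\not\uparrow} \textsf{G}_{\text{fin}}(\mathcal O_{\mathbb K(X)}, \mathcal O_{\mathbb K(X)})\).

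Next, I would apply Theorem \ref{thm:KMenger}, which states exactly that the three finite selection games
\[
    \textsf{G}_{\text{fin}}(\mathcal K_X, \mathcal K_X)
    \equiv \textsf{G}_{\text{fin}}(\mathcal K_{\mathbb K(X)}, \mathcal K_{\mathbb K(X)})
    \equiv \textsf{G}_{\text{fin}}(\mathcal O_{\mathbb K(X)}, \mathcal O_{\mathbb K(X)})
\]
are equivalent. By the definition of \(\equiv\), this equivalence includes transferring the clause \(\text{I} \underset{\text{cnst}}{\not\uparrow}\) in both directions among the three games. Chaining this with the reformulations from Step 1 yields the cyclic equivalence of (a), (b), (c).

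There is no real obstacle here; the work has been packaged into Theorem \ref{thm:KMenger} and Lemma \ref{lem:Lindelof}. The only subtlety worth flagging is that Theorem \ref{thm:KMenger} is stated for finite selection games, so I would be careful to pass through the finite selection version of the Lindel\"{o}f-type principles, which is legitimate precisely because Lemma \ref{lem:Lindelof} asserts \(\text{I} \underset{\text{cnst}}{\not\uparrow} \textsf{G}_1 \iff \text{I} \underset{\text{cnst}}{\not\uparrow} \textsf{G}_{\text{fin}}\) for any pair of classes.
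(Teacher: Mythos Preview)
Your proposal is correct and is exactly the intended argument: the corollary is meant to follow immediately from Theorem \ref{thm:KMenger} by reading off the \(\text{I} \underset{\text{cnst}}{\not\uparrow}\) clause of the equivalence and translating via Lemma \ref{lem:Lindelof}. The paper leaves this corollary unproved for precisely that reason, and your care in passing through the \(\textsf{G}_{\text{fin}}\) version is appropriate.
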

\begin{corollary}
    For any space \(X\), the following are equivalent:
    \begin{enumerate}[label=(\alph*)]
        \item
        \(X \models \textsf{S}_{\text{fin}}(\mathcal K, \mathcal K)\)
        \item
        \(\mathbb K(X) \models \textsf{S}_{\text{fin}}(\mathcal K, \mathcal K)\)
        \item
        \(\mathbb K(X)\) is Menger.
    \end{enumerate}
\end{corollary}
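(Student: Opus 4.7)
The plan is to observe that this corollary is a direct consequence of Theorem \ref{thm:KMenger} once each of the three conditions is rephrased as a predetermined-strategy statement about the appropriate finite-selection game.

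Specifically, I would begin by recalling that, by the remark following the definition of the selection principles (which references \cite[Prop.~15]{ClontzDuality}), we have the general characterization \(X \models \textsf{S}_{\text{fin}}(\mathcal A, \mathcal B) \iff \text{I} \underset{\text{pre}}{\not\uparrow} \textsf{G}_{\text{fin}}(\mathcal A, \mathcal B)\). Applying this three times — once with \((\mathcal A, \mathcal B) = (\mathcal K_X, \mathcal K_X)\), once with \((\mathcal K_{\mathbb K(X)}, \mathcal K_{\mathbb K(X)})\), and once with \((\mathcal O_{\mathbb K(X)}, \mathcal O_{\mathbb K(X)})\) — translates conditions (a), (b), and (c) respectively into the statements \(\text{I} \underset{\text{pre}}{\not\uparrow} \textsf{G}_{\text{fin}}(\mathcal K_X, \mathcal K_X)\), \(\text{I} \underset{\text{pre}}{\not\uparrow} \textsf{G}_{\text{fin}}(\mathcal K_{\mathbb K(X)}, \mathcal K_{\mathbb K(X)})\), and \(\text{I} \underset{\text{pre}}{\not\uparrow} \textsf{G}_{\text{fin}}(\mathcal O_{\mathbb K(X)}, \mathcal O_{\mathbb K(X)})\).

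Next, I would invoke Theorem \ref{thm:KMenger}, which asserts the chain of equivalences
\[
    \textsf{G}_{\text{fin}}(\mathcal K_X, \mathcal K_X)
    \equiv \textsf{G}_{\text{fin}}(\mathcal K_{\mathbb K(X)}, \mathcal K_{\mathbb K(X)})
    \equiv \textsf{G}_{\text{fin}}(\mathcal O_{\mathbb K(X)}, \mathcal O_{\mathbb K(X)}).
\]
Since game equivalence \(\equiv\) includes, by definition, the biconditional \(\text{I} \underset{\text{pre}}{\not\uparrow} \mathcal G \iff \text{I} \underset{\text{pre}}{\not\uparrow} \mathcal H\), the three predetermined-strategy statements above are mutually equivalent. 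Chaining these back through the selection-principle characterization yields \((a) \iff (b) \iff (c)\), completing the proof.

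There is no substantial obstacle here: all of the work has already been done in Lemma \ref{lem:LessThanMenger}, Theorem \ref{thm:HyperspaceEquivalence}, and their consolidation in Theorem \ref{thm:KMenger}. The only care needed is to cite the correct bridge between selection principles and non-existence of predetermined winning strategies for One; the writeup should therefore be essentially a one-line reference to Theorem \ref{thm:KMenger} together with a pointer to the selection-principle/game dictionary.
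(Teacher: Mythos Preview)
Your proposal is correct and matches the paper's approach: the corollary is stated without proof, being an immediate consequence of Theorem~\ref{thm:KMenger} together with the dictionary \(\textsf{S}_{\text{fin}}(\mathcal A,\mathcal B) \iff \text{I} \underset{\text{pre}}{\not\uparrow} \textsf{G}_{\text{fin}}(\mathcal A,\mathcal B)\). Your writeup is exactly the unpacking the paper leaves implicit.
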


\begin{corollary}
    For any space \(X\), \[\text{I} \underset{\text{pre}}{\uparrow} \textsf{G}_{\text{fin}}(\mathcal K_X , \mathcal K_X) \iff \text{I}\uparrow \textsf{G}_{\text{fin}}(\mathcal K_X , \mathcal K_X).\]
\end{corollary}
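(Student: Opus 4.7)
The plan is to reduce the statement to Pawlikowski's theorem (Theorem \ref{thm:Pawlikowski}) applied to the hyperspace \(\mathbb K(X)\), by routing the strategy types through the game equivalence established in Theorem \ref{thm:KMenger}.

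The trivial direction is \(\text{I} \underset{\text{pre}}{\uparrow} \textsf{G}_{\text{fin}}(\mathcal K_X,\mathcal K_X) \implies \text{I} \uparrow \textsf{G}_{\text{fin}}(\mathcal K_X,\mathcal K_X)\), since every predetermined strategy is a strategy. For the nontrivial direction, assume \(\text{I} \uparrow \textsf{G}_{\text{fin}}(\mathcal K_X,\mathcal K_X)\). By Theorem \ref{thm:KMenger},
\[
\textsf{G}_{\text{fin}}(\mathcal K_X,\mathcal K_X) \equiv \textsf{G}_{\text{fin}}(\mathcal O_{\mathbb K(X)},\mathcal O_{\mathbb K(X)}),
\]
and the definition of game equivalence includes the biconditional \(\text{I}\not\uparrow \mathcal G \iff \text{I}\not\uparrow \mathcal H\); contrapositively, \(\text{I}\uparrow \textsf{G}_{\text{fin}}(\mathcal O_{\mathbb K(X)},\mathcal O_{\mathbb K(X)})\).

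Now Pawlikowski's theorem (Theorem \ref{thm:Pawlikowski}) applies to the hyperspace \(\mathbb K(X)\) for \(\square = \text{fin}\), yielding \(\text{I} \underset{\text{pre}}{\uparrow} \textsf{G}_{\text{fin}}(\mathcal O_{\mathbb K(X)},\mathcal O_{\mathbb K(X)})\). Transferring back across the equivalence of Theorem \ref{thm:KMenger}, which also preserves \(\text{I}\underset{\text{pre}}{\not\uparrow}\) in both directions, gives \(\text{I} \underset{\text{pre}}{\uparrow} \textsf{G}_{\text{fin}}(\mathcal K_X,\mathcal K_X)\).

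There is no real obstacle here; the proof is a short chain of implications, and all the heavy lifting has already been done in building up the equivalence in Theorem \ref{thm:KMenger} (which in turn rests on Lemma \ref{lem:CoverTranslation} and Corollary \ref{bestCorollary}). The point worth checking is only that the chosen notion of game equivalence transfers exactly the two strategy types used in Pawlikowski's theorem, which it does by definition.
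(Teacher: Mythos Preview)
Your proof is correct and is precisely the paper's approach: the paper's entire proof reads ``This follows from Theorems \ref{thm:Pawlikowski} and \ref{thm:KMenger},'' and you have simply unpacked that one line by chaining the equivalence of Theorem \ref{thm:KMenger} with Pawlikowski's theorem on \(\mathbb K(X)\).
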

\begin{proof}
    This follows from Theorems \ref{thm:Pawlikowski} and \ref{thm:KMenger}.
\end{proof}

Like before, single selections present an obstacle.
In the context of \(k\)-covers, we don't even obtain an analog to Corollary \ref{cor:OmegaRothberger}.
\begin{example}
    In general, \(\textsf{S}_1(\mathcal K, \mathcal K) \not\Rightarrow \textsf{S}_1(\mathcal O , \mathcal O)\).
    Observe that \(\mathbb R \models \textsf{S}_1(\mathcal K, \mathcal K)\) but \(\mathbb R \not\models \textsf{S}_1(\mathcal O,\mathcal O)\).
    If \(\{ \mathscr U_n : n \in \omega \}\) consists of \(k\)-covers of \(\mathbb R\), simply choose \(U_n \in \mathscr U_n\) to be so that \([-n,n] \subseteq U_n\).
    Then \(\{ U_n : n \in \omega\}\) is a \(k\)-cover of \(\mathbb R\).
    On the other hand, consider \(\mathscr V_n = \{ B(q, 2^{-n}) : q \in \mathbb Q\}\) and any sequence of selections \(V_n \in \mathscr V_n\).
    Notice that the union of the \(V_n\) has finite Lebesgue measure so they cannot cover \(\mathbb R\).
\end{example}

Because of this non-example, we cannot obtain a version of Pawlikowski's result for \(k\)-covers as easily as we did for \(\omega\)-covers.
In the next two results we nevertheless prove that there is a Pawlikowski style strategy reduction for the \(k\)-Rothberger game.
The basic idea is to take the game up to the hyperspace and play with the right kind of open sets to guarantee that Two's play results in a \(k\)-cover.

\begin{lemma} \label{lem:AscendingSelections}
    Suppose \(\mathcal A\) is any ideal of closed sets that contains all singletons of a space \(X\).
    Also suppose \(\bigcup\{ \mathscr F_n : n \in \omega\} \in \mathcal O(X,\mathcal A)\) where \(\mathscr F_n\) is a finite collection of open sets for each \(n \in \omega\).
    Then, for any \(A \in \mathcal A\), there exists an increasing sequence \(\{\alpha_n : n \in \omega\}\) so that
    \[
        (\forall n \in \omega)(\exists U \in \mathscr F_{\alpha_n})\left[ A \subseteq U \right].
    \]
\end{lemma}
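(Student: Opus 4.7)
The plan is to argue by contradiction. Suppose only finitely many $n$ admit some $U \in \mathscr{F}_n$ containing $A$; I will use this supposition to build an element of $\mathcal{A}$ that refuses to be covered by any single member of $\bigcup_{n\in\omega}\mathscr{F}_n$, contradicting the $\mathcal{A}$-cover hypothesis.

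Concretely, let $S = \{n \in \omega : (\exists U \in \mathscr{F}_n)[A \subseteq U]\}$ and assume $S$ is finite. Then the collection
\[
    \mathscr{G}_A = \left\{U \in \bigcup_{n \in S}\mathscr{F}_n : A \subseteq U\right\}
\]
is a finite family of open sets, each of which is a proper subset of $X$ because $\bigcup_{n\in\omega}\mathscr{F}_n$ is a non-trivial cover. For each $U \in \mathscr{G}_A$ I can therefore pick a witness point $y_U \in X \setminus U$. Let $Y = \{y_U : U \in \mathscr{G}_A\}$; since $\mathcal{A}$ contains all singletons and is closed under finite unions as an ideal of closed sets, $A \cup Y \in \mathcal{A}$.

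Now apply the $\mathcal{A}$-cover hypothesis to $A \cup Y$: there must be some $U^* \in \bigcup_{n\in\omega}\mathscr{F}_n$ with $A \cup Y \subseteq U^*$. In particular $A \subseteq U^*$, so by definition of $S$ and $\mathscr{G}_A$ we have $U^* \in \mathscr{G}_A$. But then $y_{U^*} \in Y \subseteq U^*$, contradicting the choice $y_{U^*} \notin U^*$. Hence $S$ is infinite, and enumerating $S$ in increasing order yields the required sequence $\{\alpha_n : n \in \omega\}$.

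The main subtlety is the construction of the obstructing set $Y$. A first instinct is to look for a single point outside every member of $\mathscr{G}_A$, or to appeal to non-compactness, but neither works in general: $\mathscr{G}_A$ can genuinely cover $X$ by finitely many proper open sets, so the complement of their union can be empty and non-compactness does not forbid finite open covers as such. The right move is to pair one witness $y_U$ with each $U \in \mathscr{G}_A$ and bundle them together, leveraging the ideal's closure under finite unions to return to an element of $\mathcal{A}$ that simultaneously defeats every member of $\mathscr{G}_A$.
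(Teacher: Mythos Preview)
Your proof is correct and rests on the same key device as the paper's---adjoining to \(A\) finitely many witness points \(y_U \notin U\), one for each proper open set \(U\) in a finite family, so that the enlarged set stays in the ideal \(\mathcal A\) yet escapes every member of that family. The packaging differs: the paper builds the sequence directly by recursion, setting \(A_0 = A\), taking \(\alpha_n\) to be the least index with some \(U \in \mathscr F_{\alpha_n}\) containing \(A_n\), and then defining \(A_{n+1} = A_n \cup \{x_U : U \in \mathscr F_{\alpha_n}\}\); minimality together with \(A_n \subseteq A_{n+1}\) forces \(\alpha_{n+1} > \alpha_n\). You instead argue by contradiction, assuming the index set \(S\) is finite and blocking all of \(\mathscr G_A\) in one step. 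Your route is a bit more economical and goes straight to the real content (that \(S\) is infinite); the paper's recursion is more explicitly constructive but requires the minimality bookkeeping to secure strict increase.
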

\begin{proof}
    Let \(A = A_0 \in \mathcal A\) be arbitrary and, for \(n \geq 0\), suppose we have \(A_n \in \mathcal A\) and \(\alpha_n \in \omega\) defined so that
    \[
        \alpha_n = \min\left\{ m \in \omega : \left(\exists U \in \mathscr F_{m}\right)\left[ A_n \subseteq U \right] \right\}.
    \]
    As \(U\) is a proper open set for each \(U \in \mathscr F_{\alpha_n}\), we can find \(x_U \in X \setminus U\).
    Notice that
    \[
        A_{n+1} := A_n \cup \{ x_U : U \in \mathscr F_{\alpha_n} \} \in \mathcal A
    \]
    since \(\mathcal A\) is an ideal containing singletons and \(\mathscr F_{\alpha_n}\) is finite.
    So then we can set
    \[
        \alpha_{n+1} = \min\left\{ m \in \omega : \left(\exists U \in \mathscr F_{m}\right)\left[ A_{n+1} \subseteq U \right] \right\}.
    \]
    Observe that \(\alpha_{n+1} > \alpha_n\).
    This finishes the proof.
\end{proof}

\begin{theorem}
    For any space \(X\), \(\text{I} \underset{\text{pre}}{\uparrow} \textsf{G}_1(\mathcal K_X , \mathcal K_X) \iff \text{I}\uparrow \textsf{G}_1(\mathcal K_X , \mathcal K_X)\).
\end{theorem}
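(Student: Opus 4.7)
The $(\Leftarrow)$ direction is trivial since a predetermined winning strategy is in particular a winning strategy. For $(\Rightarrow)$, my plan is to lift the game to the hyperspace \(\mathbb K(X)\), invoke the Pawlikowski theorem there, and then translate the resulting predetermined strategy back down. Concretely, assuming \(\text{I} \uparrow \textsf{G}_1(\mathcal K_X, \mathcal K_X)\), Corollary \ref{cor:KEquivalence} delivers \(\text{I} \uparrow \textsf{G}_1(\mathcal O_{\mathbb K(X)}, \mathcal O_{\mathbb K(X)})\), and Theorem \ref{thm:Pawlikowski} applied to the space \(\mathbb K(X)\) upgrades this to \(\text{I} \underset{\text{pre}}{\uparrow} \textsf{G}_1(\mathcal O_{\mathbb K(X)}, \mathcal O_{\mathbb K(X)})\), witnessed by some predetermined \(\sigma : \omega \to \mathcal O_{\mathbb K(X)}\).

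From \(\sigma\) I would build a predetermined \(\tau : \omega \to \mathcal K_X\) as follows. For each \(n\), refine \(\sigma(n)\) to a cover \(\mathscr B_n\) of \(\mathbb K(X)\) consisting of basic open sets \([W_1, \ldots, W_p]\), each contained in some member of \(\sigma(n)\); to each such \(B = [W_1, \ldots, W_p]\) associate \(V(B) = \bigcup_i W_i \subseteq X\), and set \(\tau(n) = \{V(B) : B \in \mathscr B_n\}\). This \(\tau(n)\) is a \(k\)-cover of \(X\): given a compact \(K \subseteq X\), \(K\) is a point of \(\mathbb K(X)\) and therefore lies in some \(B = [W_1, \ldots, W_p] \in \mathscr B_n\), whence \(K \subseteq \bigcup_i W_i = V(B)\). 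Note that \(\leq_{\text{II}}\) from Lemma \ref{lem:MengerRothbergerBelow} (equivalently from Corollary \ref{cor:KEquivalence}) transfers winning strategies only upward from \(\mathcal K_X\) to \(\mathcal O_{\mathbb K(X)}\), so this downward construction is not automatic and cannot be bypassed via the Translation Theorems alone.

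The main obstacle is verifying that \(\tau\) is winning. Suppose for contradiction that Two plays \(V_n = V(B_n) \in \tau(n)\) with \(B_n \subseteq U^n \in \sigma(n)\) and \(\{V_n\} \in \mathcal K_X\); the aim is to show that \(\{U^n\}\) covers \(\mathbb K(X)\), contradicting that \(\sigma\) is winning. The difficulty is that \(K \subseteq V_n = \bigcup_i W_i^n\) secures only union coverage, whereas \(K \in B_n \subseteq U^n\) additionally requires \(K \cap W_i^n \neq \emptyset\) for each \(i\). To close this gap I would apply Lemma \ref{lem:AscendingSelections} with \(\mathcal A\) the ideal of compact subsets of \(X\) and \(\mathscr F_n = \{V_n\}\): for any candidate bad compact \(K^* \in \mathbb K(X)\), the iterative construction in the lemma produces strictly increasing indices together with compact enlargements of \(K^*\) inside the ideal, and the strict increase combined with the finite combinatorics of the basic sets \(B_n\) forces the missing component-meeting condition to be satisfied at some stage, placing \(K^*\) into some \(U^n\). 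Carrying through this iterative argument, and in particular designing the enlargements so that the added points lie in the missed components \(W_i^n\) while preserving the strict index increase, is the principal technical step.
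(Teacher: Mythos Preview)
Your direction labels are reversed (the trivial direction is \(\Rightarrow\), since a predetermined winning strategy is a winning strategy), but you are attempting the correct nontrivial content. The real problem is that the final transfer you need is simply false.

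What your argument requires is the implication
\[
\text{I} \underset{\text{pre}}{\uparrow} \textsf{G}_1(\mathcal O_{\mathbb K(X)}, \mathcal O_{\mathbb K(X)}) \implies \text{I} \underset{\text{pre}}{\uparrow} \textsf{G}_1(\mathcal K_X, \mathcal K_X),
\]
equivalently (in contrapositive) \(X \models \textsf{S}_1(\mathcal K,\mathcal K) \implies \mathbb K(X)\) is Rothberger. Take \(X = \mathbb R\). The paper's Example shows \(\mathbb R \models \textsf{S}_1(\mathcal K,\mathcal K)\). On the other hand, the map \(x \mapsto \{x\}\) embeds \(\mathbb R\) as a closed subspace of \(\mathbb K(\mathbb R)\), so by Lemma~\ref{lem:SubspaceMonotone} if \(\mathbb K(\mathbb R)\) were Rothberger then \(\mathbb R\) would be Rothberger, which it is not. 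Hence there \emph{is} a predetermined winning strategy \(\sigma\) for One in \(\textsf{G}_1(\mathcal O_{\mathbb K(\mathbb R)},\mathcal O_{\mathbb K(\mathbb R)})\), yet One has \emph{no} predetermined winning strategy in \(\textsf{G}_1(\mathcal K_{\mathbb R},\mathcal K_{\mathbb R})\). Your construction of \(\tau\) from \(\sigma\) therefore cannot be winning in general, and no amount of enlargement combinatorics via Lemma~\ref{lem:AscendingSelections} will close the gap: the enlargements \(K_m \supseteq K^\ast\) that you build to meet the missing components land \(K_m\) in \(B_{n_m}\subseteq U^{n_m}\), but membership in a Vietoris open set is not inherited by subsets, so this says nothing about \(K^\ast\) itself.

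The paper avoids this obstruction by working in the contrapositive and passing not to \(\textsf{G}_1(\mathcal O_{\mathbb K(X)},\mathcal O_{\mathbb K(X)})\) but to \(\textsf{G}_{\text{fin}}(\mathcal K_{\mathbb K(X)},\mathcal K_{\mathbb K(X)})\), for which the needed implication \emph{does} hold (Corollary~\ref{cor:KEquivalence} with \(\square=1\) followed by Theorem~\ref{thm:KMenger}). It then encodes the tree \(\{U_s : s\in\omega^{<\omega}\}\) of One's strategy into carefully designed \(k\)-covers \(\mathscr V_{m,j}\) of \(\mathbb K(X)\), uses the failure of One's full strategy in the finite game there to extract increasing functions \(g,h\), and finally applies \(\textsf{S}_1(\mathcal K_{\mathbb K(X)},\mathcal K_{\mathbb K(X)})\) once more to produce a single branch \(f\in\omega^\omega\) that defeats One in \(\textsf{G}_1(\mathcal K_X,\mathcal K_X)\). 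The detour through the finite game is essential precisely because the single-selection open-cover game on \(\mathbb K(X)\) is too strong to be controlled by \(\textsf{S}_1(\mathcal K,\mathcal K)\) on \(X\).
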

\begin{proof}
    We need only show
    \[
        \text{I} \underset{\text{pre}}{\not\uparrow} \textsf{G}_1(\mathcal K_X , \mathcal K_X) \implies \text{I} \not\uparrow \textsf{G}_1(\mathcal K_X , \mathcal K_X).
    \]
    Suppose \(X \models \textsf{S}_1(\mathcal K_X, \mathcal K_X)\) and that One is playing according to some fixed strategy in \(\textsf{G}_1(\mathcal K_X , \mathcal K_X)\).
    Any \(k\)-cover can be made into a countable \(k\)-cover by the selection principle.
    Hence, we can code One's strategy with \(\{ U_s : s \in \omega^{<\omega}\}\) with the property that \(\{U_{s \concat k} : k \in \omega\}\) is a \(k\)-cover for any \(s \in \omega^{<\omega}\).
    Using this strategy for One in \(\textsf{G}_1(\mathcal K_X , \mathcal K_X)\), we will define a strategy for One in \(\textsf{G}_{\text{fin}}(\mathcal K_{\mathbb K(X)} , \mathcal K_{\mathbb K(X)})\) which will produce a winning counter-play by Two as
    \begin{align*}
        X \models \textsf{S}_1(\mathcal K_X, \mathcal K_X)
        &\implies \mathbb K(X) \models \textsf{S}_1(\mathcal K_{\mathbb K(X)}, \mathcal K_{\mathbb K(X)})\\
        &\implies \mathbb K(X) \models \textsf{S}_{\text{fin}}(\mathcal K_{\mathbb K(X)}, \mathcal K_{\mathbb K(X)})\\
        &\implies \text{I} \not\uparrow \textsf{G}_{\text{fin}}(\mathcal K_{\mathbb K(X)}, \mathcal K_{\mathbb K(X)}).
    \end{align*}
    Moreover, we will show the counter-play produced actually corresponds to a counter-play to One's strategy in \(\textsf{G}_1(\mathcal K_X , \mathcal K_X)\).
    We will do this through a sequence of useful claims.

    The first claim is that
    \[
        (\forall m \geq 0)(\forall j > 0)(\forall \mathbf K \in \mathbb K(\mathbb K(X)))(\exists s \in \omega^{|j^m|})(\forall t \in j^m)(\forall L \in \mathbf K)(\exists k < |j^m|)\left[ L \subseteq U_{t \concat (s \restriction_{k+1})} \right].
    \]
    Fix \(m \geq 0\), \(j > 0\), let \(\mathbf K \subseteq \mathbb K(X)\) be compact, and consider \(K := \bigcup \mathbf K\), which forms a compact subset of \(X\).
    Enumerate \(j^m\) as \(\{ t_\ell : \ell < |j^m| \}\).
    Let \(s(0) \in \omega\) so that \(K \subseteq U_{t_0 \concat s(0)}\).
    Then, for \(n \geq 0\), suppose we have \(s(0), \ldots, s(n) \in \omega\) defined so that \(K \subseteq U_{t_\ell \concat s(0) \concat \cdots \concat s(\ell)}\) for each \(\ell \leq n\).
    Let \(s(n+1) \in \omega\) be so that \[K \subseteq U_{t_{n+1} \concat s(0) \concat s(1) \concat \cdots \concat s(n+1)}.\]
    This defines \(s : |j^m| \to \omega\).

    Next, fix some \(t \in j^m\), let \(L \in \mathbf K\) be arbitrary, and find \(k < |j^m|\) so that \(t = t_k\).
    Then,
    \[
        L \subseteq K \subseteq U_{t_k \concat s(0) \concat \cdots \concat s(k)} = U_{t \concat s\restriction_{k+1}}.
    \]
    This establishes the claim.

    The second claim involves defining, for \(m \geq 0\), \(j > 0\), and \(s : |j^m| \to \omega\),
    \[
        \mathbf V_s(m,j) = \bigcap_{t \in j^m} \bigcup_{k=1}^{|j^m|}  [U_{t \concat (s \restriction_k)}].
    \]
    The second claim is that, for fixed \(m \geq 0\) and \(j > 0\), \(\left\{ \mathbf V_s(m,j) : s \in \omega^{|j^m|} \right\}\) is a \(k\)-cover of \(\mathbb K(X)\).
    So let \(\mathbf K \subseteq \mathbb K(X)\) be compact and choose \(s : |j^m| \to \omega\) so that
    \[
        (\forall t \in j^m)(\forall L \in \mathbf K)(\exists k < |j^m|)\left[ L \subseteq U_{t \concat (s \restriction_{k+1})} \right],
    \]
    which is guaranteed by the first claim.
    Fix \(t \in j^m\), let \(L \in \mathbf K\), and observe that, for some \(k < |j^m|\), \[L \in [U_{t \concat (s\restriction_{k+1})}] \subseteq \bigcup_{\ell=1}^{|j^m|} [U_{t \concat (s\restriction_\ell)}].\]
    Since this is true for any \(L \in \mathbf K\), we see that
    \[
        \mathbf K \subseteq \bigcup_{\ell=1}^{|j^m|} [U_{t \concat (s\restriction_\ell)}].
    \]
    Since \(t \in j^m\) was also taken to be arbitrary, we see that
    \[
        \mathbf K \subseteq \bigcap_{t \in j^m}\bigcup_{k=1}^{|j^m|} [U_{t \concat (s\restriction_k)}] = \mathbf V_s(m,j).
    \]

    The third claim is that there are increasing functions \(g,h : \omega \to \omega\) so that
    \[
        (\forall \mathbf K \in \mathbb K(\mathbb K(X)))(\exists^\infty n \in \omega)(\exists s : (g(n+1) - g(n)) \to h(n+1))\left[ \mathbf K \subseteq \mathbf V_s(g(n), h(n)) \right].
    \]
    To accomplish this, we define a particular strategy \(\sigma\) for One in \(\textsf{G}_{\text{fin}}(\mathcal K_{\mathbb K(X)} , \mathcal K_{\mathbb K(X)})\).
    First, for \(m \geq 0\) and \(j > 0\), define
    \[
        \mathscr V_{m,j} = \left\{ \mathbf V_s(m,j) : s \in \omega^{|j^m|} \right\},
    \]
    which is a \(k\)-cover of \(\mathbb K(X)\) by the second claim.
    Also, for \(m \geq 0\), \(j > 0\), and \(p > 0\), define
    \[
        \mathscr F_{m,j,p} = \left\{ \mathbf V_s(m,j) : s \in p^{|j^m|}\right\},
    \]
    a finite subset of \(\mathscr V_{m,j}\).
    Observe that, for \(m \geq 0\) and \(j > 0\), if \(0 < p \leq q\), then \(\mathscr F_{m,j,p} \subseteq \mathscr F_{m,j,q}.\)
    In fact,
    \[
        \left(\forall \mathscr E \in [\mathscr V_{m,j}]^{<\omega}\right)\left( \exists p > 0 \right)\left[ \mathscr E \subseteq \mathscr F_{m,j,p} \right].
    \]
    To see this, let \(\mathscr E \in [\mathscr V_{m,j}]^{<\omega}\) and \(A \in \left[ \omega^{|j^m|} \right]^{<\omega}\) be so that \(\mathscr E = \{ \mathbf V_s(m,j) : s \in A \}\).
    Then set
    \[
        p = 1 + \max\{ s(\ell) : \ell < |j^m|, s \in A\}.
    \]
    It follows that \(A \subseteq p^{|j^m|}\) which further implies that \(\mathscr E \subseteq \mathscr F_{m,j,p}\).
    Now define \(p_{m,j} : \left[ \mathscr V_{m,j} \right]^{<\omega} \to \omega\) to be
    \[
        p_{m,j}(\mathscr E) = \min\{ p : \mathscr E \subseteq \mathscr F_{m,j,p} \}.
    \]

    We next define the strategy \(\sigma\).
    Set \(m_0 = 0\), \(j_0 = 1\), and \(\sigma(\emptyset) = \mathscr V_{m_0,j_0}\).
    For \(n \geq 0\), suppose \(\{ \mathscr E_\ell : \ell < n \}\), \(\{m_\ell:\ell \leq n\}\), and \(\{j_\ell:\ell \leq n\}\) have been defined.
    Let \(m_{n+1} = m_n + |j_n^{m_n}|\).
    Then for \(\mathscr E_n \in \left[\mathscr V_{m_n,j_n}\right]^{<\omega}\) set \(j_{n+1} = \max\{ j_n , p_{m_n,j_n}(\mathscr E_n)\}\) and define
    \[
        \sigma(\mathscr V_{m_0,j_0} , \mathscr E_0 , \ldots , \mathscr V_{m_n,j_n} , \mathscr E_n) = \mathscr V_{m_{n+1},j_{n+1}}
    \]
    This finishes the definition of the strategy \(\sigma\).

    As One does not have a winning strategy in \(\textsf{G}_{\text{fin}}(\mathcal K_{\mathbb K(X)} , \mathcal K_{\mathbb K(X)})\), Two can produce a counter-play \(\{ \mathscr E_n : n \in \omega \}\) so that
    \[
        \bigcup \{ \mathscr E_n : n \in \omega \}
    \]
    is a \(k\)-cover of \(\mathbb K(X)\).
    Notice that this provides increasing sequences \(\langle m_n : n \in \omega \rangle\) and \(\langle j_n : n \in \omega \rangle\).
    Moreover, as \(\mathscr E_n \in \left[ \mathscr V_{m_n,j_n} \right]^{<\omega}\) and \(j_{n+1} \geq p_{m_n,j_n}(\mathscr E_n)\), we have that \(\mathscr E_n \subseteq \mathscr F_{m_n,j_n,j_{n+1}}\).
    That is,
    \[
        \bigcup \{ \mathscr F_{m_n,j_n,j_{n+1}} : n \in \omega \}
    \]
    is a \(k\)-cover of \(\mathbb K(X)\).

    Define \(g,h : \omega \to \omega\) by the rules \(g(n) = m_n\) and \(h(n) = j_n\) and notice that they are increasing functions.
    To verify they are as desired, we first find one \(n\in\omega\) that meets the requisite criterion.
    Let \(\mathbf K \subseteq \mathbb K(X)\) be compact.
    Since \(\bigcup \{ \mathscr F_{m_n,j_n,j_{n+1}} : n \in \omega \}\) is a \(k\)-cover of \(\mathbb K(X)\), there must be some \(n \in \omega\) and \(s : |j_n^{m_n}| \to j_{n+1}\) so that \(\mathbf K \subseteq \mathbf V_s(m_n,j_n)\).
    Behold that \(m_n = g(n)\), \(j_n = h(n)\), \(j_{n+1} = h(n+1)\), and \(|j_n^{m_n}| = m_{n+1} - m_n = g(n+1)-g(n)\).
    The fact that infinitely many such \(n\) exist follows from Lemma \ref{lem:AscendingSelections}.

    The final thing to show is that we can actually construct a counter-play against One's strategy in \(\textsf{G}_1(\mathcal K_X , \mathcal K_X)\) with the help of the defined \(g\) and \(h\).
    For \(n \geq 1\), \(k_1 < k_2 < \cdots < k_n\), and \(s_i: (g(k_i + 1) - g(k_i)) \to h(k_i + 1)\), \(1 \leq i \leq n\), we define
    \[
        \mathbf W_n(k_1,\ldots,k_n;s_1,\ldots,s_n) = \bigcap_{i=1}^n \mathbf V_{s_i}(g(k_i),h(k_i)).
    \]
    To assist with notation, we let \({F}_{k_i} = h(k_i + 1)^{(g(k_i + 1) - g(k_i))}\).
    By the third claim,
    \[
        \mathscr W_n :=
        \left\{ \mathbf W_n(k_1,\ldots, k_n ; s_1 , \ldots , s_n) : (k_1 < \cdots < k_n) \text{ and }(\forall 1 \leq i \leq n)\left[s_i \in {F}_{k_i}\right] \right\}
    \]
    is a \(k\)-cover of \(\mathbb K(X)\).
    Since we are assume \(X \models \textsf{S}_1(\mathcal K_X, \mathcal K_X)\) and we know that
    \[
        X \models \textsf{S}_1(\mathcal K_X, \mathcal K_X) \implies \mathbb K(X) \models \textsf{S}_1(\mathcal K_{\mathbb K(X)}, \mathcal K_{\mathbb K(X)}),
    \]
    for each \(n \geq 1\), we can select \(k_{n,1}<\ldots<k_{n,n}\) and \(s_{n,i} \in {F}_{k_{n,i}}\) for \(1 \leq i \leq n\), so that
    \[
        \{ \mathbf W_n(k_{n,1}, \ldots, k_{n,n};s_{n,1}, \ldots , s_{n,n}) : n \in \omega\}
    \]
    is a \(k\)-cover of \(\mathbb K(X)\).

    For each \(n \geq 1\), choose \(k_{n,\alpha_n} \in \{k_{n,1} , \ldots , k_{n,n}\} \setminus \{ k_{\ell, \alpha_\ell} : 1 \leq \ell < n\}\) and consider
    \[
        B_n := \left\{g(k_{n,\alpha_n}) + i : i < g(k_{n,\alpha_n}+1) - g(k_{n,\alpha_n}) \right\}.
    \]
    We argue that the sets \(\{ B_n : n \geq 1 \}\) are pair-wise disjoint.
    Suppose \(m < n\) and notice that \(k_{m,\alpha_m} \neq k_{n,\alpha_n}\) by definition.
    Without loss of generality, suppose \(k_{m,\alpha_m} < k_{n,\alpha_n}\).
    Since it follows that \(g(k_{m,\alpha_m}) < g(k_{n,\alpha_n})\), to establish that  \(B_m\) and \(B_n\) are dsijoint, it suffices to check that \(g(k_{m,\alpha_m}+1) \leq g(k_{n,\alpha_n})\).
    Indeed,
    \begin{align*}
        k_{m,\alpha_m} + 1 \leq k_{n,\alpha_n}
        &\implies g(k_{m,\alpha_m} + 1) \leq g(k_{n,\alpha_n}).
    \end{align*}
    Moreover, for any \(\ell \in B_n\), \(\ell - g(k_{n,\alpha_n}) < g(k_{n,\alpha_n}+1) - g(k_{n,\alpha_n})\). So \(s_{n,\alpha_n}(\ell - g(k_{n,\alpha_n}))\) is defined.
    This allows us to define \(f : \omega \to \omega\) by the rule
    \[
        f(\ell)
        = \begin{cases}
            s_{n,\alpha_n}(\ell - g(k_{n,\alpha_n})), & \ell \in B_n\\
            0, & \text{otherwise}
        \end{cases}
    \]

    We claim that this \(f\) is Two's desired play.
    Let \(K \subseteq X\) be compact, and notice that \(\{K\}\) is a compact subset of \(\mathbb K(X)\).
    So there exists some \(n \geq 1\) so that
    \[
        \{K\} \subseteq \mathbf W_n(k_{n,1}, \ldots, k_{n,n};s_{n,1}, \ldots , s_{n,n})
        = \bigcap_{i=1}^n \mathbf V_{s_{n,i}}(g(k_{n,i}),h(k_{n,i})).
    \]
    For ease of notation, let \(E_n = h(k_{n,\alpha_n})^{g(k_{n,\alpha_n})}\) and notice that
    \[
        \{K\} \subseteq \mathbf V_{s_{n,\alpha_n}}(g(k_{n,\alpha_n}),h(k_{n,\alpha_n}))
        = \bigcap_{t \in E_n} \bigcup_{\ell=1}^{|E_n|} [U_{t \concat (s_{n,\alpha_n} \restriction_\ell)}].
    \]
    We wish to show that \(f\restriction_{g(k_{n,\alpha_n})} : g(k_{n,\alpha_n}) \to h(k_{n,\alpha_n})\).
    So let \(\ell < g(k_{n,\alpha_n})\) be arbitrary.
    If \(\ell \not\in B_m\) for any \(m \geq 1\), \(f(\ell) = 0 < h(k_{n,\alpha_n})\).
    Otherwise, \(\ell \in B_m\) for some \(m \geq 1\).
    Then there is some \(i < g(k_{m,\alpha_m}+1) - g(k_{m,\alpha_m})\) with \(\ell = g(k_{m,\alpha_m}) + i\).
    Hence,
    \[
        f(\ell) = s_{m,\alpha_m}(\ell - g(k_{m,\alpha_m}))
        = s_{m,\alpha_m}(i) < h(k_{m,\alpha_m}).
    \]
    Also, as
    \[
        g(k_{m,\alpha_m}) \leq g(k_{m,\alpha_m}) + i = \ell < g(k_{n,\alpha_n}),
    \]
    we see that \(k_{m,\alpha_m} < k_{n,\alpha_n}\) which provides
    \[
        f(\ell) < h(k_{m,\alpha_m}) \leq h(k_{n,\alpha_n}).
    \]
    Thus,
    \[
        \{K\} \subseteq \bigcup_{\ell=1}^{|E_n|} [U_{(f\restriction_{g(k_{n,\alpha_n})}) \concat (s_{n,\alpha_n} \restriction_\ell)}]
        \implies K \in \bigcup_{\ell=1}^{|E_n|} [U_{(f\restriction_{g(k_{n,\alpha_n})}) \concat (s_{n,\alpha_n} \restriction_\ell)}]
    \]
    which means that for some \(1 \leq \ell \leq |E_n|\),
    \[
        K \in [U_{(f\restriction_{g(k_{n,\alpha_n})}) \concat (s_{n,\alpha_n} \restriction_\ell)}] \implies K \subseteq U_{(f\restriction_{g(k_{n,\alpha_n})}) \concat (s_{n,\alpha_n} \restriction_\ell)}.
    \]
    Finally, by our definition of \(f\), we note that
    \[
        (f\restriction_{g(k_{n,\alpha_n})}) \concat (s_{n,\alpha_n} \restriction_\ell)
        = f \restriction_{g(k_{n,\alpha_n}+1)}
    \]
    which means
    \[
        K \subseteq U_{f \restriction_{g(k_{n,\alpha_n}+1)}}.
    \]
    Therefore, if Two plays according to \(f\), Two produces a \(k\)-cover of \(X\), finishing the proof.
\end{proof}

\section{Final Remarks}

We end with a couple of other applications of these techniques which relate to the interplay between cover types.
\begin{theorem} \label{thm:KOmega}
    For any space \(X\) and \(\square \in \{1,\text{fin}\}\),
    \[
        \textsf{G}_\square(\mathcal K_{\mathbb K(X)},\Omega_{\mathbb K(X)}) \leq_{\text{II}} \textsf{G}_\square(\mathcal K_{X},\Omega_{X}).
    \]
\end{theorem}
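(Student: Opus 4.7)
The plan is to apply Corollary~\ref{bestCorollary} (the first, one-directional half) using the same translation map that has powered essentially every result in Section~\ref{sec:Compacts}: namely the ``singleton-of-open-sets'' operator
\[
    \varphi : \mathscr T_X \to \mathscr T_{\mathbb K(X)}, \qquad \varphi(U) = [U] = \{K \in \mathbb K(X) : K \subseteq U\}.
\]
In the notation of Corollary~\ref{bestCorollary}, the classes on the left of the inequality are $\mathcal A = \mathcal K_{\mathbb K(X)}$ and $\mathcal C = \Omega_{\mathbb K(X)}$, and those on the right are $\mathcal B = \mathcal K_X$ and $\mathcal D = \Omega_X$, so we indeed have $\bigcup \mathcal B = \mathscr T_X$ and $\bigcup \mathcal A = \mathscr T_{\mathbb K(X)}$ as required, and the inclusions $\bigcup \mathcal C \subseteq \bigcup \mathcal A$ and $\bigcup \mathcal D \subseteq \bigcup \mathcal B$ are trivial.

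First, I would verify the image-preservation hypothesis: for every $\mathscr U \in \mathcal K_X$, we need $\varphi[\mathscr U] = \{[U] : U \in \mathscr U\} \in \mathcal K_{\mathbb K(X)}$. This is precisely the content of Corollary~\ref{cor:KBelowRefinement}\ref{lem:KBR_Belows}, so no new work is required.

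Next, I would verify the reflection hypothesis: if $\mathscr E \subseteq \mathscr T_X$ satisfies $\varphi[\mathscr E] \in \Omega_{\mathbb K(X)}$, then $\mathscr E \in \Omega_X$. Given a finite $F \subseteq X$, the singleton $\{F\}$ is a finite (hence compact) subset of $\mathbb K(X)$, so there exists $U \in \mathscr E$ with $\{F\} \subseteq [U]$, which is exactly the statement $F \subseteq U$. Thus $\mathscr E$ is an $\omega$-cover of $X$.

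With both hypotheses in hand, the first conclusion of Corollary~\ref{bestCorollary} delivers $\textsf{G}_\square(\mathcal K_{\mathbb K(X)}, \Omega_{\mathbb K(X)}) \leq_{\text{II}} \textsf{G}_\square(\mathcal K_X, \Omega_X)$ for both $\square \in \{1,\text{fin}\}$. There is no serious obstacle here; the only mildly delicate point is recognising that the reflection direction works \emph{because} we are reflecting to $\omega$-covers on the right rather than $k$-covers: finite subsets of $X$, viewed as singletons in $\mathbb K(X)$, are always finite subsets of $\mathbb K(X)$, so testing $\varphi[\mathscr E]$ against an $\omega$-cover condition on $\mathbb K(X)$ automatically gives an $\omega$-cover condition on $X$. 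I would also note that one should not expect the full equivalence via the second half of Corollary~\ref{bestCorollary}, since the reverse passage would require reflecting $k$-covers and this is exactly where the analogy with the $\Omega$-case breaks down, as illustrated by the example with $\mathbb R$ earlier.
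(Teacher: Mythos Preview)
Your proposal is correct and follows essentially the same route as the paper: both apply Corollary~\ref{bestCorollary} via the map \(\varphi(U)=[U]\), invoking Corollary~\ref{cor:KBelowRefinement}\ref{lem:KBR_Belows} for the forward hypothesis. Your verification of the reflection hypothesis is actually slightly more direct than the paper's, which routes through Corollary~\ref{cor:FiniteBelowRefinement} after restricting to \(\mathcal P_{\text{fin}}(X)\); your observation that a finite \(F\subseteq X\) yields the singleton \(\{F\}\subseteq\mathbb K(X)\) accomplishes the same thing in one step.
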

\begin{proof}
    Define \(\varphi : \mathscr T_X \to \mathscr T_{\mathbb K(X)}\) by the rule \(\varphi(U) = [U]\).
    By Corollary \ref{cor:KBelowRefinement}, we know that \(\varphi[\mathscr U] \in \mathcal K_{\mathbb K(X)}\) when \(\mathscr U \in \mathcal K_X\).
    Now, suppose \(\mathscr E \subseteq \mathscr T_X\) is so that \(\varphi[\mathscr E] \in \Omega_{\mathbb K(X)}\).
    Observe that \(\varphi[\mathscr E]\) is also an \(\omega\)-cover of \(\mathcal P_{\text{fin}}(X)\).
    By Corollary \ref{cor:FiniteBelowRefinement}, we know that
    \[
        \left\{ V \in \mathscr T_X : \exists U \in \varphi[\mathscr E]\left([V] \subseteq U\right) \right\} \in \Omega_X
    \]
    which demonstrates that \(\mathscr E\) is an \(\omega\)-cover of \(X\).
    Thus, Corollary \ref{bestCorollary} applies.
\end{proof}

\begin{theorem}
    For any space \(X\) and \(\square \in \{1,\text{fin}\}\),
    \[
        \textsf{G}_\square(\Omega_X , \mathcal K_X) \leq_{\text{II}} \textsf{G}_\square(\Omega_{\mathbb K(X)}, \mathcal K_{\mathbb K(X)}).
    \]
\end{theorem}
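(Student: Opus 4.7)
The plan is to apply Corollary \ref{Corollary17} with $\mathcal A = \Omega_X$, $\mathcal C = \mathcal K_X$, $\mathcal B = \Omega_{\mathbb K(X)}$, and $\mathcal D = \mathcal K_{\mathbb K(X)}$. I would define $\Tone : \Omega_{\mathbb K(X)} \to \Omega_X$ by
\[
\Tone(\mathscr U) = \{V \in \mathscr T_X : (\exists U \in \mathscr U)([V] \subseteq U)\},
\]
and define $\Ttwo : \mathscr T_X \times \Omega_{\mathbb K(X)} \to \mathscr T_{\mathbb K(X)}$ by choosing, for each $V \in \Tone(\mathscr U)$, some witness $U \in \mathscr U$ with $[V] \subseteq U$ and setting $\Ttwo(V,\mathscr U) = U$ (extending $\Ttwo$ arbitrarily when $V \notin \Tone(\mathscr U)$). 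Condition (Ft1) is then immediate from the construction.

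To see that $\Tone(\mathscr U) \in \Omega_X$, given a finite $F \subseteq X$, note that $2^F \setminus \{\emptyset\}$ is a finite subset of $\mathbb K(X)$, so some $U \in \mathscr U$ contains every nonempty subset of $F$ as a point. The basic-neighborhood construction from the proof of Lemma \ref{lem:CoverTranslation} then produces an open $V \subseteq X$ with $F \subseteq V$ and $[V] \subseteq U$ inside $\mathbb K(X)$; the only adjustment is that the argument is carried out with $U$ open in $\mathbb K(X)$ (rather than in $\mathcal P_{\text{fin}}(X)$ or another proper subspace), which is unproblematic because each witnessing basic neighborhood $[W_1,\ldots,W_m] \subseteq U$ already lives in $\mathbb K(X)$.

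For (Ft2), suppose $\mathcal F_n \in [\Tone(\mathscr U_n)]^{<\omega}$ and $\bigcup_{n\in\omega} \mathcal F_n \in \mathcal K_X$, and let $\mathbf K \subseteq \mathbb K(X)$ be compact. By Lemma \ref{lem:UnionsClosureIdeals}, $\bigcup \mathbf K$ is a compact subset of $X$, so there exist $n$ and $V \in \mathcal F_n$ with $\bigcup \mathbf K \subseteq V$; then each $K \in \mathbf K$ lies in $[V] \subseteq \Ttwo(V,\mathscr U_n)$, whence $\mathbf K \subseteq \Ttwo(V,\mathscr U_n)$. This verifies (Ft2) and completes the application. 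The main obstacle is simply confirming that the refinement from Lemma \ref{lem:CoverTranslation} still yields the strong condition $[V] \subseteq U$ in $\mathbb K(X)$ starting from merely an $\omega$-cover of $\mathbb K(X)$—but since the set $2^F \setminus \{\emptyset\}$ that needs to be covered is itself finite, no $k$-cover hypothesis is actually required.
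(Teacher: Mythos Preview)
Your proof is correct and follows essentially the same approach as the paper: both apply Corollary \ref{Corollary17} with the same $\Tone$ and $\Ttwo$, and both verify (Ft2) by passing from a compact $\mathbf K \subseteq \mathbb K(X)$ to its union $\bigcup \mathbf K \in \mathbb K(X)$. Your explicit check that the Lemma \ref{lem:CoverTranslation} construction yields $[V] \subseteq U$ in the full $\mathbb K(X)$ (not merely in $\mathcal P_{\text{fin}}(X)$) is a useful clarification of a point the paper handles tersely by citing Corollary \ref{cor:FiniteBelowRefinement}.
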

\begin{proof}
    In this case, we apply Corollary \ref{Corollary17}.
    Define \(\Tone: \Omega_{\mathbb K(X)} \to \Omega_X\) by the rule
    \[
        \Tone(\mathscr U) = \left\{ V \in \mathscr T_X : \exists U \in \mathscr U\left( [V] \subseteq U \right) \right\}.
    \]
    Since every \(\omega\)-cover of \(\mathbb K(X)\) is an \(\omega\)-cover of \(\mathcal P_{\text{fin}}(X)\), \(\Tone\) is defined by Corollary \ref{cor:FiniteBelowRefinement}.

    Now we define \(\Ttwo : \mathscr T_X \times \Omega_{\mathbb K(X)} \to \mathscr T_{\mathbb K(X)}\) in the following way.
    Let \(\mathscr U \in \Omega_{\mathbb K(X)}\) and if \(V \in \Tone(\mathscr U)\), let \(\Ttwo(V, \mathscr U)\) be so that \([V] \subseteq \Ttwo(V,\mathscr U)\).
    Otherwise, let \(\Ttwo(V,\mathscr U) = V\).

    Suppose \(\mathscr F_n \in \left[ \Tone(\mathscr U_n) \right]^{<\omega}\) are so that \(\bigcup_{n\in\omega}\mathscr F_n \in \mathbb K(X)\).
    By Corollary \ref{cor:KBelowRefinement}, we know that
    \[
        \bigcup_{n\in\omega} \left\{ [V] : V \in \mathscr F_n \right\} \in \mathcal K_{\mathbb K(X)}
    \]
    and, as \([V] \subseteq \Ttwo(V,\mathscr U_n)\) for each \(V \in \mathscr F_n\), we see that
    \[
        \bigcup_{n\in\omega} \left\{ \Ttwo(V,\mathscr U_n) : V \in \mathscr F_n \right\} \in \mathcal K_{\mathbb K(X)}.
    \]
    This finishes the proof.
\end{proof}

For further work, are \cite[Lemma 7]{CHContinuousFunctions} and \cite[Lemma 8]{CHContinuousFunctions} true as stated?
Additionally, can Theorem \ref{thm:KOmega} be used to establish a Pawlikowski style strategy reduction for \(\textsf{G}_\square(\mathcal K, \Omega)\)?

\providecommand{\bysame}{\leavevmode\hbox to3em{\hrulefill}\thinspace}
\providecommand{\MR}{\relax\ifhmode\unskip\space\fi MR }
\providecommand{\MRhref}[2]{%
  \href{http://www.ams.org/mathscinet-getitem?mr=#1}{#2}
}
\providecommand{\href}[2]{#2}

\end{document}